%
%
%
%
%
\RequirePackage{fix-cm}
\documentclass[smallextended,numbook,runningheads]{svjour3}
\smartqed  
\usepackage{xcolor}
\usepackage{graphicx}
\usepackage{amsmath,amssymb,bm}
\usepackage{graphicx,graphics,color}
\usepackage[colorlinks,linkcolor=blue]{hyperref}
\usepackage{tikz}

\journalname{...}
\date{ \phantom{b} \vspace{45mm}\phantom{e}}

%
%
%
%

\usepackage{color}

\def\half{\frac{1}{2}}

\newcommand\bfd{{\mathbf d}}
\newcommand\bfe{{\mathbf e}}
\newcommand\bff{{\mathbf f}}
\newcommand\bfg{{\mathbf g}}

\newcommand\bfn{{\mathbf n}}
\newcommand\bfr{{\mathbf r}}
\newcommand\bfs{{\mathbf s}}
\newcommand\bfu{{\mathbf u}}
\newcommand\bfv{{\mathbf v}}
\newcommand\bfw{{\mathbf w}}
\newcommand\bfx{{\mathbf x}}
\newcommand\bfy{{\mathbf y}}
\newcommand\bfz{{\mathbf z}}
\newcommand\bfA{{\mathbf A}}

\newcommand\bfE{{\mathbf E}}
\newcommand\bfH{{\mathbf H}}

\newcommand\bfK{{\mathbf K}}
\newcommand\bfM{{\mathbf M}}

\newcommand\bfV{{\mathbf V}}

\newcommand\bfzero{{\mathbf 0}}

\newcommand\andquad{\quad\hbox{ and }\quad}

\renewcommand\d{\text{d}}

%


\newcommand{\Ga}{\Gamma}

\newcommand{\laplace}{\Delta}

\newcommand{\mat}{\partial^{\bullet}}

\newcommand{\diff}{\frac{\d}{\d t}}
\newcommand{\eps}{\varepsilon}

\newcommand{\inv}{^{-1}}
\newcommand{\la}{\langle}

\newcommand{\nb}{\nabla}

\newcommand{\pa}{\partial}
\newcommand{\R}{\mathbb{R}}
\newcommand{\ra}{\rangle}

\newcommand{\spn}{\textnormal{span}}
\newcommand{\st}{such that}

\def \t {(t)}

\def \to {\rightarrow}

\newcommand{\phiv}{\varphi^v}
\newcommand{\phin}{\varphi^\n}
\newcommand{\phiH}{\varphi^H}

\newcommand{\fg}{G}





\newcommand{\normM}[1]{\| #1 \|_{\bfM(\xs)}}
\newcommand{\normMnum}[1]{\| #1 \|_{\bfM(\bfx)}}
\newcommand{\normMs}[1]{\| #1 \|_{\bfM(\xs(s))}}
\newcommand{\normA}[1]{\| #1 \|_{\bfA(\xs)}}
\newcommand{\normAnum}[1]{\| #1 \|_{\bfA(\bfx)}}
\newcommand{\normAt}[1]{\| #1 \|_{\bfA(\xs(t))}}
\newcommand{\normK}[1]{\| #1\|_{\bfK(\xs)}}
\newcommand{\normKt}[1]{\| #1\|_{\bfK(\xs(t))}}
\newcommand{\normKs}[1]{\| #1\|_{\bfK(\xs(s))}}
\newcommand{\normKo}[1]{\| #1\|_{\bfK(\xs(0))}}

\newcommand{\wtx}{\widetilde{\bfx}}
\newcommand{\us}{\bfu^\ast}
\newcommand{\vs}{\bfv^\ast}
\newcommand{\xs}{\bfx^\ast}
\newcommand{\dotus}{\dot\bfu^\ast}

\newcommand{\dotxs}{\dot\bfx^\ast}
\newcommand{\uls}{\bfu_\ast}
\newcommand{\vls}{\bfv_\ast}
\newcommand{\xls}{\bfx_\ast}
\newcommand{\wtuls}{\widetilde{\bfu}_\ast}

\newcommand{\wtxls}{\widetilde{\bfx}_\ast}

\newcommand{\eu}{\bfe_\bfu}
\newcommand{\ev}{\bfe_\bfv}
\newcommand{\ex}{\bfe_\bfx}
\newcommand{\wteu}{{\bfe}_{\widetilde\bfu}}

\newcommand{\wtex}{{\bfe}_{\widetilde\bfx}}
\newcommand{\doteu}{\dot\bfe_\bfu}

\newcommand{\dotex}{\dot\bfe_\bfx}
\newcommand{\du}{\bfd_\bfu}
\newcommand{\dv}{\bfd_\bfv}

\newcommand{\dx}{\bfd_\bfx}
\newcommand{\ru}{\bfr_\bfu}
\newcommand{\rv}{\bfr_\bfv}



\newcommand{\n}{\nu}

\newcommand{\dof}{N}

\newcommand{\blueon}{\color{black}}
\newcommand{\blueoff}{\color{black}}

\newcommand{\redon}{\color{black}}
\newcommand{\redoff}{\color{black}}

\newcommand{\bcl}{\color{black}}
\newcommand{\ecl}{\color{black}}

\newcommand{\bbk}{\color{black}}
\newcommand{\ebk}{\color{black}}

\newcommand{\be}{\partial^\tau\!}

\begin{document}

\title{A convergent evolving finite element algorithm for mean curvature flow of closed surfaces}

\titlerunning{A convergent finite element algorithm for mean curvature flow}        

\author{Bal\'{a}zs~Kov\'{a}cs \and
        Buyang~Li \and
        Christian~Lubich
}

\authorrunning{B.~Kov\'{a}cs, B.~Li and Ch.~Lubich} 

\institute{B. Kov\'{a}cs and Ch. Lubich  \at
              Mathematisches Institut, Universit\"at T\"{u}bingen,\\
              Auf der Morgenstelle 10, 72076 T\"{u}bingen, Germany \\
              \email{\{kovacs,lubich\}@na.uni-tuebingen.de}
           \and
           B. Li \at
              Department of Applied Mathematics, Hong Kong Polytechnic University,\\
              Kowloon, Hong Kong \\
              \email{buyang.li@polyu.edu.hk}
}

\dedication{This paper is dedicated to Gerhard Dziuk on the occasion of his 70th birthday and to Gerhard Huisken on the occasion of his 60th birthday.}

\date{}

\maketitle

\begin{abstract}
A proof of convergence is given for semi- and full discretizations of mean curvature flow of closed two-dimensional surfaces. The numerical method proposed and studied here combines evolving finite elements, whose nodes determine the discrete surface like in {Dziuk's} method, and linearly implicit backward difference formulae for time integration. The proposed method differs from Dziuk's approach in that it discretizes Huisken's evolution equations for the normal vector and mean curvature and uses these evolving geometric quantities in the velocity law projected to the finite element space. This numerical method admits a convergence analysis \bcl in the case of finite elements of polynomial degree at least two and backward difference formulae of orders two to five. The error analysis \ecl combines
stability estimates and consistency estimates to yield optimal-order $H^1$-norm error bounds for the computed surface position, velocity, normal vector and mean curvature. The
stability analysis is based on the matrix--vector formulation of the finite element method and does not use geometric arguments. The geometry enters only into the consistency estimates. Numerical experiments illustrate and complement the theoretical results.

  \keywords{mean curvature flow \and geometric evolution equations \and evolving surface finite elements \and linearly implicit backward difference formula \and
  stability \and convergence analysis}   \subclass{35R01 \and 65M60 \and 65M15 \and 65M12}
\end{abstract}

\section{Introduction}

Mean curvature flow is a geometric evolution equation that has been studied intensively in geometric analysis in the last decades, as is evidenced by the recent monographs by Ecker \cite{Ecker2012} and Mantegazza \cite{Mantegazza} and the numerous references therein, going back to the pioneering work by Brakke~\cite{Brakke1978} and Huisken~\cite{Huisken1984}. As White \cite{White2002} puts it succinctly in his review article:  ``There are many processes by which
a curve or surface can evolve, but among them one is arguably the most natural: the mean curvature flow.''

Approximating the mean curvature flow by numerical methods was first addressed by Dziuk \cite{Dziuk90} in 1990. He proposed a finite element method based on a weak formulation of the mean curvature flow as a (formally) heat-like partial differential equation, in which the moving nodes of the finite element mesh determine the approximate evolving surface. However, proving convergence of Dziuk's method or \bcl of other evolving finite element methods, such as the method proposed by Barrett, Garcke \& N\"urnberg \cite{BGN2008}, has remained an open problem for the mean curvature flow of closed two-dimensional surfaces (or higher-dimensional hypersurfaces).\ecl

Convergence of an evolving finite element method was shown for the mean curvature flow of closed {\it curves} (known as curve-shortening flow) in  \cite{Dziuk94,DeckelnickDziuk};  see also \cite{BDS}. For a \bcl non-evolving \ecl finite element discretization of the nonlinear parabolic equation describing the mean curvature flow of a two-dimensional surface that is a {\it graph} (with the evolving height over a fixed domain as the unknown variable), convergence of optimal order was shown in \cite{deckelnick1995convergence,deckelnick2000error-graph}.  To our knowledge, the only convergence result of a numerical method for mean curvature flow of closed surfaces in the literature is for a level set method~\cite{deckelnick2000error-levelset}, under very weak regularity assumptions but consequently with very low order of convergence and only under very restrictive conditions between the mesh size and the regularization parameter that appears in the method. We further refer to Deckelnick, Dziuk \& Elliott \cite{DeckelnickDE2005} for an excellent review of numerical approaches to mean curvature flow and related problems, including applications in various fields of science.

In the present paper we formulate a novel evolving finite element method for mean curvature flow of closed two-dimensional surfaces and prove optimal-order convergence over time intervals on which the evolving surface remains sufficiently regular. We study stability and convergence for both the finite element semi-discretization and the full discretization obtained with a linearly implicit backward difference time discretization.  Our approach shares with Dziuk's method the property that the moving nodes of a finite element mesh determine the approximate evolving surface. However, the method presented here discretizes equations that are different from the equation discretized by Dziuk. In his approach, a weak formulation of the quasi-heat equation describing mean curvature flow is discretized, whereas in the present work evolution equations for the normal vector and the mean curvature are discretized, which then yield the velocity of the surface evolving under mean curvature flow. Evolution equations for geometric quantities on a surface evolving under mean curvature flow have been an important tool in the analysis of mean curvature flow ever since Huisken's 1984 paper \cite{Huisken1984}, but apparently they have so far not been used in the numerical approximation of mean curvature flow. 

The numerical method based on the discretization of evolution equations of geometric quantities, as presented here, is computationally more expensive than Dziuk's method (roughly by about a factor 2), but on the other hand it provides full-order approximations to basic geometric quantities --- the normal vector and curvature --- in addition to the position and velocity of the surface.

Our numerical approach is related to our previous paper \cite{KLLP2017},  where we study the convergence of finite elements on an evolving surface driven by diffusion on the surface. The convergence analysis of the present paper relies on techniques developed in that previous paper. As in \cite{KLLP2017},  the stability
analysis  works  with  the  matrix--vector  formulation  of  the  method  and  does
not use geometric arguments. The geometry only enters into the analysis of the consistency error.

The paper is organized as follows:

In Section 2 we recall Huisken's evolution equations for the normal vector and mean curvature of surfaces under mean curvature flow and formulate the system of equations that will be discretized in the following.

In Section 3 we formulate the semi-discretization in space by the evolving surface finite element method, and in Section 4 we state and discuss the convergence result for the semi-discretization (Theorem 4.1).

In Section 5 we formulate the full discretization with linearly implicit backward difference formulae, and in Section 6 we state the convergence result for the full discretization (Theorem 6.1).

In Sections 7 to 12 we give the proofs of the convergence results.
Theorem 4.1 is proved in Sections 7, 8 and 9, where we study stability, consistency and their combination, respectively, for the semi-discretization.
Theorem 6.1 is proved in Sections 10, 11 and 12, where again we study stability, consistency and their combination, respectively, this time for the full discretization.

In Section 13 we present results of some numerical experiments.

Finally, Section 14 collects some conclusions and adds comments on related further topics that are not addressed in the main text.

We use the notational convention to denote vectors in $\R^3$ by italic letters, but to denote finite element nodal vectors in $\R^N$ and $\R^{3N}$ by boldface lowercase letters and finite element mass and stiffness matrices by boldface capitals. All boldface symbols in this paper will thus be related to the matrix--vector formulation of the finite element method.

\section{Evolution equations for mean curvature flow}

\subsection{Basic notions and notation}
\label{subsection: basic notions}

We consider the evolving two-dimensional closed surface $\Gamma(t)\subset\R^3$ as the image
$$
\Ga(t) = \{ X(p,t) \,:\, p \in \Ga^0 \}
$$
of a smooth mapping $X:\Ga^0\times [0,T]\to \R^3$ such that $X(\cdot,t)$ is an embedding for every $t$. Here, $\Ga^0$ is a smooth closed initial surface, and $X(p,0)=p$. 
In view of the subsequent numerical discretization, it is convenient to think of $X(p,t)$ as the position at time $t$ of a moving particle with label $p$, and of $\Ga(t) $ as a collection of such particles. 
To indicate the dependence of the surface on~$X$, we will write
$$
\Ga(t) = 
\Ga[X(\cdot,t)] , \quad\hbox{ or briefly}\quad \Ga[X]
$$
when the time $t$ is clear from the context. The {\it velocity} $v(x,t)\in\R^3$ at a point $x=X(p,t)\in\Gamma(t)$  equals
\begin{equation} \label{velocity}
 \partial_t X(p,t)= v(X(p,t),t).
\end{equation}
For a known velocity field  $v$,  the position $X(p,t)$ at time $t$ of the particle with label $p$ is obtained by solving the ordinary differential equation \eqref{velocity} from $0$ to $t$ for a fixed $p$.

For a function $u(x,t)$ ($x\in \Gamma(t)$, $0\le t \le T$) we denote the {\it material derivative} (with respect to the parametrization $X$) as
$$
\mat u(x,t) = \frac \d{\d t} \,u(X(p,t),t) \quad\hbox{ for } \ x=X(p,t).
$$

On any regular surface $\Gamma\subset\R^3$, we denote by $\nabla_{\Ga}u:\Gamma\to\R^3$ the  {\it tangential gradient} of a function $u:\Gamma\to\R$, and in the case of a vector-valued function $u=(u_1,u_2,u_3)^T:\Gamma\to\R^3$, we let
$\nabla_{\Ga}u=
(\nabla_{\Ga}u_1,
 \nabla_{\Ga}u_2,
\nabla_{\Ga}u_3)$. \bbk We thus use the convention that the gradient of $u$ has the gradient of the components as column vectors. \ebk 
We denote by $\nabla_{\Ga} \cdot f$ the {\it surface divergence} of a vector field $f$ on $\Gamma$, 
and by 
$\laplace_{\Ga} u=\nabla_{\Ga}\cdot \nabla_{\Ga}u$ the {\it Laplace--Beltrami operator} applied to $u$; see the review \cite{DeckelnickDE2005} or \cite[Appendix~A]{Ecker2012} or any textbook on differential geometry for these notions. 

We denote the unit outer normal vector field to $\Gamma$ by $\n:\Gamma\to\R^3$. Its surface gradient contains the (extrinsic) curvature data of the surface $\Gamma$. At every $x\in\Gamma$, the matrix of the extended Weingarten map,
$$
A(x)=\nabla_\Gamma \n(x),
$$ 
is a symmetric $3\times 3$ matrix (see, e.g., \cite[Proposition~20]{Walker2015}). Apart from the eigenvalue $0$ with eigenvector $\n(x)$, its other two eigenvalues are
the principal curvatures $\kappa_1$ and $\kappa_2$ at the point $x$ on the surface. They determine the fundamental quantities
\begin{align}\label{Def-H-A2}
H:={\rm tr}(A)=\kappa_1+\kappa_2, \qquad 
 |A|^2 = \kappa_1^2 +\kappa_2^2 ,
\end{align}
where $|A|$ denotes the Frobenius norm of the matrix $A$.
Here, $H$ is called the {\it mean curvature} (as in most of the literature, we do not put a factor 1/2). 

\subsection{Evolution equations for normal vector and mean curvature of a surface moving under mean curvature flow}
Mean curvature flow sets the velocity \eqref{velocity} of the surface $\Ga[X]$ to
\begin{equation}
\label{mcf-v}
v= -H\n.
\end{equation}
The geometric quantities on the right-hand side are known to satisfy the following evolution equations.

\begin{lemma} \label{lem:mat-Hn} {\rm (Huisken \cite{Huisken1984})}
 For a regular surface $\Ga[X]$ moving under mean curvature flow,  the normal vector and the mean curvature satisfy
\begin{align}
\mat \n &= \laplace_{\Ga[X]} \n + |A|^2 \,\n, \label{Eq_n} \\
\mat H &= \laplace_{\Ga[X]} H + |A|^2 H. \label{Eq_H}
\end{align}
\end{lemma}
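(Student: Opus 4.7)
My plan is to prove both evolution equations by direct computation in local coordinates on $\Ga^0$ pulled forward through $X(\cdot,t)$, using only the Gauss--Weingarten formulas and the Codazzi--Mainardi equations. A purely geometric identity that is valid on \emph{any} smooth closed surface lies at the heart of both equations, namely
\begin{equation*}
\laplace_{\Ga[X]}\,\n \,=\, \nabla_{\Ga[X]} H \,-\, |A|^2\,\n.
\end{equation*}
I would establish this first by decomposing $\laplace_{\Ga[X]}\n$ into its normal and tangential components. The normal part $\langle \laplace_{\Ga[X]}\n,\n\rangle$ equals $-g^{ij}\langle \partial_i\n,\partial_j\n\rangle=-|A|^2$, since differentiating $|\n|^2\equiv 1$ twice gives $\langle \partial_i\partial_j\n,\n\rangle=-\langle \partial_i\n,\partial_j\n\rangle$, and the coordinate trace equals the Frobenius norm of $A$. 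The tangential part, once the Christoffel corrections to the flat $\partial_i\partial_j\n$ are bookkept, reduces to $g^{ij}\,\nabla_j h_i{}^k\,\partial_k X$, which by the Codazzi identity $\nabla_i h_{jk}=\nabla_j h_{ik}$ collapses to $g^{kl}(\nabla_l H)\,\partial_k X=\nabla_{\Ga[X]} H$.

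For \eqref{Eq_n} I would then simply compute $\mat\n$ from the MCF condition $\partial_t X=-H\n$. Since $|\n|^2\equiv 1$, the normal component of $\mat\n$ vanishes, and differentiating the orthogonality $\langle \n,\partial_p X\rangle=0$ in $t$ and using $\partial_t\partial_p X=-(\partial_p H)\n-H\,\partial_p\n$ together with $\partial_p\n\perp\n$ yields $\langle \mat\n,\partial_p X\rangle=\partial_p H$. Hence $\mat\n=\nabla_{\Ga[X]} H$, and combining with the geometric identity above gives \eqref{Eq_n}.

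For \eqref{Eq_H} I would write $H=g^{ij}h_{ij}$ in local coordinates and differentiate in $t$. The first fundamental form evolves as $\mat g_{ij}=-2H h_{ij}$ (a direct consequence of $\partial_t X=-H\n$), so $\mat g^{ij}=2H h^{ij}$. For the second fundamental form, the computation of $\mat h_{ij}$ uses $\partial_i\partial_j\partial_t X=-\partial_i\partial_j(H\n)$ together with $\mat\n=\nabla_{\Ga[X]} H$ already established; the coordinate second derivatives of $H$ combine with the Christoffel pieces into a covariant Hessian and produce
\begin{equation*}
\mat h_{ij} \,=\, \nabla_i\nabla_j H \,-\, H\,h_i{}^k h_{jk}.
\end{equation*}
Substituting into $\mat H=(\mat g^{ij})h_{ij}+g^{ij}\mat h_{ij}$ gives $2H|A|^2+\laplace_{\Ga[X]} H-H|A|^2=\laplace_{\Ga[X]} H+|A|^2 H$, which is \eqref{Eq_H}.

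The main obstacle is not conceptual but bookkeeping: all sign conventions (direction of the outer normal, sign in the definition of $h_{ij}$, direction of the Weingarten map, and conventions for covariant derivatives and Christoffel symbols) must be fixed at the outset and used consistently throughout. Once that is done, the proof is just a string of local-coordinate identities; the only nontrivial geometric input is the Codazzi equation, which feeds the $\nabla_{\Ga[X]} H$ term, and the elementary identity $\langle \partial_i\partial_j\n,\n\rangle=-\langle\partial_i\n,\partial_j\n\rangle$, which produces the $|A|^2$ terms appearing in both \eqref{Eq_n} and \eqref{Eq_H}.
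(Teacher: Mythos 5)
Your proof is correct, but it is more self-contained than the paper's: the paper proves the lemma essentially by citation, appealing to Huisken's Corollary~3.5 for \eqref{Eq_H} and to Huisken's Lemma~3.3 for $\mat\n=\nabla_{\Ga[X]} H$, then invoking the purely geometric identity $\nabla_{\Ga[X]} H=\Delta_{\Ga[X]}\n+|A|^2\n$ from Ecker's appendix (or Walker) to convert the latter into \eqref{Eq_n}. Your plan rederives exactly these two intermediate facts from scratch, so structurally the two proofs coincide at the level of the key lemmas; the difference is that you carry out the local-coordinate Gauss--Weingarten--Codazzi computations that the paper outsources to the literature. Each of your three computations checks out: the normal/tangential decomposition of $\Delta_\Gamma\n$ with Codazzi feeding the $\nabla H$ term; the derivation of $\mat\n=\nabla_\Gamma H$ from differentiating $\langle\n,\partial_p X\rangle=0$ with $\partial_t X=-H\n$; and the computation of $\mat H$ from $\mat g_{ij}=-2Hh_{ij}$ and $\mat h_{ij}=\nabla_i\nabla_j H-Hh_i^{\ k}h_{jk}$. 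What your route buys is a self-contained exposition that makes the role of Codazzi and the identity $\langle\partial_i\partial_j\n,\n\rangle=-\langle\partial_i\n,\partial_j\n\rangle$ visible; what the paper's route buys is brevity and a clean attribution to the classical sources. Your caveat about sign conventions is the right one to flag: the sign of $\mat h_{ij}$ flips with the sign convention for the second fundamental form, but with a consistent choice the final result is independent of it.
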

\begin{proof}$\,$ 
The evolution equation for $H$ is given in \cite[Corollary 3.5]{Huisken1984}. In \cite[Lemma 3.3]{Huisken1984},
the following evolution equation for the normal vector is derived: 
$$
\mat \n = \nabla_{\Ga[X]}H  . 
$$
On any surface $\Gamma$, it holds true that (see \cite[(A.9)]{Ecker2012} or \cite[Proposition~24]{Walker2015})
$$
\nabla_{\Ga[X]} H = \Delta_{\Ga[X]} \n + |A|^2 \n,
$$
which then gives the stated evolution equation for $\n$; see also \cite[(B.11)]{Ecker2012}.
\qed
\end{proof}

\subsection{The system of equations used for discretization}
Collecting the above equations, we have reformulated mean curvature flow as the system of semilinear parabolic equations \eqref{Eq_n}--\eqref{Eq_H} on the surface coupled to the velocity law \eqref{mcf-v}.
The numerical discretization is based on a weak formulation of \eqref{mcf-v}-\eqref{Eq_H}: we use the velocity equation \eqref{velocity} together with
\begin{subequations}
\label{weak form}
\begin{align}
&	 \int_{\Ga[X]}\!\! \nabla_{\Ga[X]} v \cdot  \nabla_{\Ga[X]} \phiv +  \int_{\Ga[X]} \!\! v \cdot \phiv 
\\ \nonumber
& \hskip 2cm
=   -\!\int_{\Ga[X]} \!\!\nabla_{\Ga[X]}(H\n) \cdot \nabla_{\Ga[X]}\phiv  -\!\int_{\Ga[X]} \!\! H\n \cdot \phiv
	\\
&	 \int_{\Ga[X]} \mat \n \cdot \phin + \int_{\Ga[X]} \nabla_{\Ga[X]} \n \cdot \nabla_{\Ga[X]} \phin =  \int_{\Ga[X]} | \nabla_{\Ga[X]} \n|^2\,   \n\, \cdot \phin
	\\
&	 \int_{\Ga[X]} \!\!\mat H \, \phiH + \int_{\Ga[X]} \!\!\nabla_{\Ga[X]} H \cdot  \nabla_{\Ga[X]} \phiH =   \int_{\Ga[X]} | \nabla_{\Ga[X]} \n|^2 \,  H\, \phiH
\end{align}
\end{subequations}
for all test functions $\phiv \in H^1(\Ga[X])^3$ and $\phin \in H^1(\Ga[X])^3$, $\phiH \in H^1(\Ga[X])$. Here,
we use the Sobolev space 
$H^1(\Ga)=\{ u \in L^2(\Gamma)\,:\, \nabla_\Gamma u \in L^2(\Gamma) \}$. This system is complemented with the initial data $X^0$, $\n^0$ and $H^0$.

\bbk Throughout the paper both the usual Euclidean scalar product for vectors and the Frobenius inner product for matrices (which equals to the Euclidean product using an arbitrary vectorisation) are denoted by a dot. \ebk

It is instructive to compare this system with the equations on which Dziuk's discretization in \cite{Dziuk90} is based. There, he uses that
$-H\nu =  \Delta_\Gamma x_\Gamma$, with $x_\Gamma$ denoting the identity map on $\Gamma$, and employs the velocity equation \eqref{velocity} together with the weak formulation of the velocity law $v = \Delta_{\Gamma[X]} x_{\Gamma[X]}$:
\begin{equation} \label{weak form Dziuk}
\int_{\Ga[X]} \! v \cdot \varphi = -  \int_{\Ga[X]} \nabla_{\Ga[X]} \bcl x_{\Gamma[X]} \ecl \cdot \nabla_{\Ga[X]}\varphi
\end{equation}
for all test functions $\varphi\in H^1(\Ga[X])^3$. This weak formulation clearly has the charm of greater simplicity than \eqref{weak form},  but no convergence proof for a discretization based on this formulation is known to date.

\section{Evolving finite element semi-discretization}
\label{section:ESFEM}

\subsection{Evolving surface finite elements}
We formulate the evolving surface finite element (ESFEM) discretization for the velocity law coupled with evolution equations on the evolving surface, following the description in \cite{KLLP2017}, which is based on \cite{Dziuk88} and \cite{Demlow2009}. We use simplicial finite elements and continuous piecewise polynomial basis functions of degree~$k$, as defined in \cite[Section 2.5]{Demlow2009}.

We triangulate the given smooth initial surface $\Gamma^0$ by an admissible family of triangulations $\mathcal{T}_h$ of decreasing maximal element diameter $h$; see \cite{DziukElliott_ESFEM} for the notion of an admissible triangulation, which includes quasi-uniformity and shape regularity. For a momentarily fixed $h$, we denote by $\bfx^0 $  the vector in $\R^{3\dof}$ that collects all nodes $p_j$ $(j=1,\dots,\dof)$ of the initial triangulation. By piecewise polynomial interpolation of degree $k$, the nodal vector defines an approximate surface $\Gamma_h^0$ that interpolates $\Gamma^0$ in the nodes $p_j$. We will evolve the $j$th node in time, denoted $x_j(t)$ with $x_j(0)=p_j$, and collect the nodes at time $t$ in a column vector
$$
	\bfx(t) \in \R^{3\dof}. 
$$
We just write $\bfx$ for $\bfx(t)$ when the dependence on $t$ is not important.

By piecewise polynomial interpolation on the  plane reference triangle that corresponds to every
curved triangle of the triangulation, the nodal vector $\bfx$ defines a closed surface denoted by $\Gamma_h[\bfx]$. We can then define globally continuous finite element {\it basis functions}
$$
	\phi_i[\bfx]:\Gamma_h[\bfx]\to\R, \qquad i=1,\dotsc,\dof,
$$
which have the property that on every triangle their pullback to the reference triangle is polynomial of degree $k$, and which satisfy at the node $x_j$
\begin{equation*}
	\phi_i[\bfx](x_j) = \delta_{ij} \quad  \text{ for all } i,j = 1,  \dotsc, \dof .
\end{equation*}
These functions span the finite element space on $\Gamma_h[\bfx]$,
\begin{equation*}
	S_h[\bfx] = S_h(\Gamma_h[\bfx])=\spn\big\{ \phi_1[\bfx], \phi_2[\bfx], \dotsc, \phi_\dof[\bfx] \big\} .
\end{equation*}
For a finite element function $u_h\in S_h[\bfx]$, the tangential gradient $\nabla_{\Gamma_h[\bfx]}u_h$ is defined piecewise on each element.

The discrete surface at time $t$ is parametrized by the initial discrete surface via the map $X_h(\cdot,t):\Gamma_h^0\to\Gamma_h[\bfx(t)]$ defined by
$$
	X_h(p_h,t) = \sum_{j=1}^\dof x_j(t) \, \phi_j[\bfx(0)](p_h), \qquad p_h \in \Gamma_h^0,
$$
which has the properties that $X_h(p_j,t)=x_j(t)$ for $j=1,\dots,\dof$, that  $X_h(p_h,0) \linebreak = p_h$ for all $p_h\in\Gamma_h^0$, and
$$
\Gamma_h[\bfx(t)]=\Gamma[X_h(\cdot,t)],
$$
where the right-hand side equals $\{ X_h(p_h,t) \,:\, p_h \in \Ga_h^0 \}$ like in Section~\ref{subsection: basic notions}.

The {\it discrete velocity} $v_h(x,t)\in\R^3$ at a point $x=X_h(p_h,t) \in \Gamma[X_h(\cdot,t)]$ is given by
$$
	\partial_t X_h(p_h,t) = v_h(X_h(p_h,t),t).
$$
In view of the transport property of the basis functions  \cite{DziukElliott_ESFEM},
$$
	\frac\d{\d t} \Bigl( \phi_j[\bfx(t)](X_h(p_h,t)) \Bigr) =0 ,
$$
the discrete velocity equals, for $x \in \Gamma_h[\bfx(t)]$,
$$
	v_h(x,t) = \sum_{j=1}^\dof v_j(t) \, \phi_j[\bfx(t)](x) \qquad \hbox{with } \ v_j(t)=\dot x_j(t),
$$
where the dot denotes the time derivative $\d/\d t$. 
Hence, the discrete velocity $v_h(\cdot,t)$ is in the finite element space $S_h[\bfx(t)]$, with nodal vector $\bfv(t)=\dot\bfx(t)$.
%

The {\it discrete material derivative} of a finite element function $u_h(x,t)$ with nodal values $u_j(t)$ is
$$
\mat_h u_h(x,t) = \frac{\d}{\d t} u_h(X_h(p_h,t)) = \sum_{j=1}^\dof \dot u_j(t)  \phi_j[\bfx(t)](x)  \quad\text{at}\quad x=X_h(p_h,t).
$$

 
\subsection{ESFEM spatial semi-discretization}
\label{subsection:semi-discretization}

The finite element spatial semi-discretization of the weak coupled parabolic system \eqref{weak form} reads as follows: Find the unknown nodal vector $\bfx(t)\in \R^{3\dof}$ and the unknown finite element functions $v_h(\cdot,t)\in S_h[\bfx(t)]^3$ and 
 $\n_h(\cdot,t)\in S_h[\bfx(t)]^3$, $H_h(\cdot,t)\in S_h[\bfx(t)]$ such that
\begin{subequations}
\label{semidiscrete weak form}
	\begin{align}
	&\int_{\Gamma_h[\bfx]}\!\! \nb_{\Ga_h[\bfx]} v_h \cdot \nb_{\Ga_h[\bfx]} \phiv_h +\int_{\Gamma_h[\bfx]}\!\! v_h \cdot \phiv_h 
	\nonumber \\
	& \qquad\qquad =   -\!\int_{\Ga_h[\bfx]} \!\!\nabla_{\Gamma_h[\bfx]}(H_h\n_h) \cdot \nabla_{\Ga_h[\bfx]}\phiv_h -\!\int_{\Gamma_h[\bfx]} \!\! H_h\n_h \cdot \phiv_h
	\\ \label{discrete nu}
	&\int_{\Gamma_h[\bfx]}\!\!\mat_h \n_h \cdot \phin_h +\int_{\Gamma_h[\bfx]}\!\! \nb_{\Ga_h[\bfx]} \n_h \cdot \nb_{\Ga_h[\bfx]} \phin_h =  
	\int_{\Gamma_h[\bfx]}\!\! |\nb_{\Ga_h[\bfx]} \n_h|^2  \,\n_h \cdot \phin_h
	\\
	&\int_{\Gamma_h[\bfx]}\!\!\mat_h H_h \, \phiH_h +\int_{\Gamma_h[\bfx]}\!\! \nb_{\Ga_h[\bfx]} H_h \cdot \nb_{\Ga_h[\bfx]} \phiH_h =  
	\int_{\Gamma_h[\bfx]}\!\!|\nb_{\Ga_h[\bfx]} \n_h|^2  \,H_h \, \phiH_h
	\end{align}
\end{subequations}
for all $\phiv_h\in S_h[\bfx(t)]^3$, $\phin_h\in S_h[\bfx(t)]^3$, and $\phiH_h\in S_h[\bfx(t)]$, with  the surface $\Gamma_h[\bfx(t)]=\Gamma[X_h(\cdot,t)] $ given by the differential equation
\begin{equation}\label{xh}
	\partial_t X_h(p_h,t) = v_h(X_h(p_h,t),t), \qquad p_h\in\Ga_h^0.
\end{equation}
The initial values for the nodal vector $\bfx$ are taken as the positions of the nodes of the triangulation of the given initial surface $\Gamma^0$.
The initial data for $\n_h$ and $H_h$ are determined by Lagrange interpolation of $\n^0$ and $H^0$, respectively. 

In the above approach, the discretization of the evolution equations for $\n$ and $H$ is done in the usual way of evolving surface finite elements. The velocity law \eqref{mcf-v} is enforced by a Ritz projection of $-H_h\n_h$ to the finite element space on $\Gamma_h[\bfx]$. Taking instead just the finite element interpolation of $-H_h\n_h$ does not appear sufficient for a convergence analysis.
Note that the finite element functions $\n_h$ and $H_h$ are {\it not} the normal vector and the mean curvature of the discrete surface $\Gamma_h[\bfx(t)]$.

For comparison, Dziuk's discretization \cite{Dziuk90} uses the differential equation \eqref{xh} together with the ESFEM discretization of \eqref{weak form Dziuk}: for all $\varphi_h\in S_h[\bfx(t)]^3$,
\begin{equation} \label{semidiscrete weak form Dziuk}
\int_{\Ga_h[\bfx]} \!\! v_h \cdot \varphi_h = -  \int_{\Ga_h[\bfx]} \nabla_{\Ga_h[\bfx]} x_{\Ga_h[\bfx]} \cdot \nabla_{\Ga_h[\bfx]}\varphi_h.
\end{equation}

\subsection{Matrix--vector formulation}
\label{subsection:DAE}

We collect the nodal values in column vectors  $\bfv=(v_j) \in \R^{3N}$, $\bfH=(H_j)\in\R^N$ and $\bfn=(\n_j) \in \R^{3N}$, and denote 
$$
\bfu=\left(\begin{array}{c}
\bfn\\ 
\bfH
\end{array}\right) \in \R^{4N}
.$$ 

We define the surface-dependent mass matrix $\bfM(\bfx)$ and stiffness matrix $\bfA(\bfx)$ 
on the surface determined by the nodal vector $\bfx$:
\begin{equation*}
\begin{aligned}
	\bfM(\bfx)\vert_{ij} =&\ \int_{\Ga_h[\bfx]} \! \phi_i[\bfx] \phi_j[\bfx] , \\
	\bfA(\bfx)\vert_{ij} =&\ \int_{\Ga_h[\bfx]} \! \nb_{\Ga_h[\bfx]} \phi_i[\bfx] \cdot \nb_{\Ga_h[\bfx]} \phi_j[\bfx] , 
\end{aligned}
\qquad i,j = 1,  \dotsc,\dof ,
\end{equation*}
with the finite element nodal basis functions $\phi_j[\bfx] \in S_h[\bfx]$.
We define the nonlinear functions $\bff(\bfx,\bfu)\in\R^{4N}$ and $\bfg(\bfx,\bfu)\in\R^{3N}$ by
$$
\bff(\bfx,\bfu)
=
\left(\begin{array}{c}
\bff_1(\bfx,\bfu)  \\ 
\bff_2(\bfx,\bfu)  
\end{array}\right)
$$
with $\bff_1(\bfx,\bfu)\in \R^{3N}$ and $\bff_2(\bfx,\bfu)\in \R^{N}$, given by  \begin{equation*}
	\begin{aligned}
	\bff_1(\bfx,\bfu)\vert_{j+(\ell-1)N} &=\int_{\Gamma_h[\bfx]} \!\! \alpha_h^2 \,(\n_h)_\ell \,  \, \phi_j[\bfx] , 
	\qquad\text{with}\quad  \alpha_h^2 = | \nb_{\Ga_h[\bfx]} \n_h |^2,
	\\
	\bff_2(\bfx,\bfu)\vert_j &=\int_{\Gamma_h[\bfx]}\!\! \alpha_h^2 \,H_h \,  \, \phi_j[\bfx] ,
	\\
	\bfg(\bfx,\bfu)\vert_{j+(\ell-1)N} &= -\int_{\Gamma_h[\bfx]} \!\!\!\!\!   H_h  (\n_h  )_\ell \, \phi_j[\bfx]
	-\int_{\Gamma_h[\bfx]} \!\!\!\!\!  \nb_{\Ga_h[\bfx]} (H_h  (\n_h  )_\ell)\cdot  \nb_{\Ga_h[\bfx]}\phi_j[\bfx] ,
	\end{aligned}
\end{equation*}
for $j = 1, \dotsc, N$, and $\ell=1,2,3$. 

We further let, for $d=3$ or $4$ (with the identity matrices $I_d \in \R^{d \times d}$) 
$$
\bfM^{[d]}(\bfx)= I_d \otimes \bfM(\bfx), \quad
\bfA^{[d]}(\bfx)= I_d \otimes \bfA(\bfx),
 \quad
 \bfK^{[d]}(\bfx) = I_d \otimes \bigl( \bfM(\bfx) + \bfA(\bfx) \bigr).
$$
When no confusion can arise, we often write $\bfM(\bfx)$ for $\bfM^{[d]}(\bfx)$, $\bfA(\bfx)$ for $\bfA^{[d]}(\bfx)$, and $\bfK(\bfx)$ for $\bfK^{[d]}(\bfx)$.

Then, the equations \eqref{semidiscrete weak form} can be written in the following matrix--vector form: 
\begin{subequations}
	\label{eq:matrix--vector form}
	\begin{align}
	\bfK^{[3]}(\bfx)\bfv &= \bfg(\bfx,\bfu) , \label{matrix-form-v}
	\\
		\bfM^{[4]}(\bfx)\dot\bfu+\bfA^{[4]}(\bfx)\bfu &= \bff(\bfx,\bfu) , \label{matrix-form-u} 
	\end{align}
\end{subequations}
and \eqref{xh} is equivalent to 
\begin{align}
\label{matrix-form-X}
\dot\bfx=\bfv.
\end{align}
For comparison we state the matrix--vector formulation of Dziuk's semidiscretization \eqref{xh}--\eqref{semidiscrete weak form Dziuk}, which is  \eqref{matrix-form-X} together with
\begin{equation}
\label{eq:matrix--vector form Dziuk}
\bfM^{[3]}(\bfx)\bfv+\bfA^{[3]}(\bfx)\bfx = \bfzero.
\end{equation}
This is certainly more elegant than \eqref{eq:matrix--vector form} and computationally less expensive. However, since the main computational cost is in the computation of the matrices $\bfA(\bfx)$ and $\bfM(\bfx)$ and their decompositions, the (fully discretized) scheme \eqref{eq:matrix--vector form} is not substantially more expensive than the (fully discretized) scheme \eqref{eq:matrix--vector form Dziuk}. On the other hand, the method presented here provides important extra geometrical information (about the normal vector and curvature) --- and it comes with a proof of optimal-order convergence. The critical part in the convergence analysis is the proof of stability of the discretization, in the sense of bounding errors in terms of defects in the numerical equations in the appropriate norms. Our proof of stability uses the matrix--vector formulation \eqref{eq:matrix--vector form} with \eqref{matrix-form-X}, but it does not work for Dziuk's scheme 
\eqref{eq:matrix--vector form Dziuk} with \eqref{matrix-form-X}.

\subsection{Lifts} \label{subsec:lifts}

We need to compare functions on the {\it exact surface} $\Gamma(t)=\Gamma[X(\cdot,t)]$ with functions on the {\it discrete surface} $\Gamma_h(t)=\Gamma_h[\bfx(t)]$. To this end, we further work with functions on the {\it interpolated surface} $\Gamma_h^*(t)=\Gamma_h[\xs(t)]$, where
$\xs(t)$ denotes the nodal vector collecting the grid points $x_j^*(t)=X(p_j,t)$ on the exact surface.

Any finite element function $w_h:\Gamma_h(t)\to\R^m$ ($m=1$ or 3) on the discrete surface, with nodal values $w_j$, is associated with a finite element function $\widehat w_h$ on the interpolated surface $\Gamma_h^*(t)$ that is defined by 
$$
\widehat w_h  = \sum_{j=1}^N w_j \phi_j[\xs(t)].
$$
This can be further lifted to a function on the exact surface by using the \emph{lift operator} $l$, which was introduced for linear and higher-order surface approximations in \cite{Dziuk88} and \cite{Demlow2009}, respectively. 
The lift operator $l$ maps a function on the interpolated surface $\Gamma_h^*$ to a function on the exact surface $\Gamma$, provided that $\Gamma_h^*$ is sufficiently close to $\Gamma$.
The exact regular surface $\Gamma$ can be represented, \bbk in some neighbourhood of the surface, \ebk by a smooth signed distance function $d : \R^3 \times [0,T] \to \R$, cf.~\cite[Section~2.1]{DziukElliott_ESFEM}, such that $\Gamma$ is the zero level set of~$d$ \bbk (i.e., $x\in \Gamma$ if and only if $d(x)=0$), with negative values of $d$ in the interior. \ebk 
Using this distance function,  the lift of a continuous function $\eta_h \colon \Ga_h^* \to \R^m$ is defined as
$
    \eta_{h}^{l}(y) := \eta_h(x)$ for $x\in\Ga_h^*$,
where for every $x\in \Ga_h^*$ the point $y=y(x)\in\Ga$ is uniquely defined via
$
    y = x - \nu(y) d(x).
$
 
The composed lift $L$ from finite element functions on the discrete surface $\Gamma_h(t)$ to functions on the exact surface $\Gamma(t)$ via the interpolated surface $\Gamma_h^*(t)$ is denoted by 
$$
w_h^L = (\widehat w_h)^l.
$$

\section{Convergence of the semi-discretization}
\label{section: main result}

We are now in the position to formulate the first main result of this paper, which yields optimal-order error bounds for the finite element semi-discretization \eqref{semidiscrete weak form}--\eqref{xh} of the system of mean curvature equations \eqref{weak form} with \eqref{velocity}, for finite elements of polynomial degree $k\ge 2$. We denote by $\Gamma(t)=\Gamma[X(\cdot,t)]$ the exact surface and by $\Gamma_h(t)=\Gamma[X_h(\cdot,t)]=\Gamma_h[\bfx(t)]$ the discrete surface at time $t$. We introduce the notation
$$
x_h^L(x,t) =  X_h^L(p,t) \in \Gamma_h(t) \qquad\hbox{for}\quad x=X(p,t)\in\Gamma(t).
$$

\begin{theorem} \label{MainTHM} 
\label{theorem: coupled error estimate}
Consider the space discretization \eqref{semidiscrete weak form}--\eqref{xh} of the coupled mean curvature flow problem  \eqref{weak form} with \eqref{velocity}, using evolving surface finite elements of polynomial degree~$k\ge 2$. 
Suppose that the mean curvature flow problem admits an exact solution $(X,v,\n,H)$ that is sufficiently smooth on the time interval $t\in[0,T]$, and that the flow map $X(\cdot,t):\Gamma^0\to \Gamma(t)\subset\R^3$ is non-degenerate so that $\Gamma(t)$ is a regular surface on the time interval $t\in[0,T]$. 

Then, there exists a constant $h_0 >0$ such that for all mesh sizes $h \leq h_0$  the following error bounds for the lifts of the discrete position, velocity, normal vector and mean curvature hold over the exact surface $\Ga(t)=\Ga[X(\cdot,t)]$ for $0\le t \le T$:
\begin{align*}
        \|x_h^L(\cdot,t) - \mathrm{id}_{\Gamma(t)}\|_{H^1(\Ga(t))^3} &\leq Ch^k, \\
  \|v_h^L(\cdot,t) - v(\cdot,t)\|_{H^1(\Ga(t))^3} 
& \leq C h^k, \\
 \|\n_h^L(\cdot,t) - \n(\cdot,t)\|_{H^1(\Ga(t))^3} 
& \leq C h^k, \\
  \|H_h^L(\cdot,t) - H(\cdot,t)\|_{H^1(\Ga(t))} 
& \leq C h^k,
\end{align*}
and also
\begin{equation*}
\begin{aligned}
 \|X_h^l(\cdot,t) - X(\cdot,t)\|_{H^1(\Ga_0)^3} &\leq Ch^k ,
\end{aligned}
\end{equation*}
where the constant $C$ is independent of $h$ and $t$, but depends on bounds of the $H^{k+1}$ norms of the solution $(X,v,\n,H)$ of the mean curvature flow and on the length $T$ of the time interval.
\end{theorem}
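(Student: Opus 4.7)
The plan is to prove the theorem by combining a stability estimate with a consistency estimate, both formulated at the level of the matrix--vector system \eqref{eq:matrix--vector form}--\eqref{matrix-form-X}, following the programme announced in the introduction. Let $\xs(t)\in\R^{3N}$ be the nodal vector of the exact positions $x_j^*(t)=X(p_j,t)$, and let $\vs(t)$ and $\us(t)=(\bfn^*(t),\bfH^*(t))$ collect the Lagrange interpolations of the exact $v$, $\n$, $H$ at those nodes. Substituting this reference triple into \eqref{matrix-form-v}--\eqref{matrix-form-u} produces two residuals, the defects $\dv(t)$ and $\du(t)$, while $\dotxs=\vs$ holds exactly by definition. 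Subtracting, the errors $\ex=\bfx-\xs$, $\ev=\bfv-\vs$, $\eu=\bfu-\us$ satisfy a perturbed system of the same structure, driven by the defects $\dv,\du$ and by Lipschitz-in-$\ex$ perturbation terms such as $(\bfK(\bfx)-\bfK(\xs))\vs$, $(\bfM(\bfx)-\bfM(\xs))\dotus$, and $\bff(\bfx,\bfu)-\bff(\xs,\us)$.

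\textbf{Stability, the main obstacle.} The heart of the proof is a stability bound roughly of the form
\[
\sup_{0\le t\le T}\bigl(\normKt{\ex(t)}+\normKt{\ev(t)}+\normKt{\eu(t)}\bigr) \le C\sup_{0\le t\le T}\bigl(\normKt{\dv(t)}+\normKt{\du(t)}\bigr),
\]
to be proved purely at the matrix--vector level along the lines of \cite{KLLP2017}, without geometric arguments. The key analytic tools are norm equivalences between $\|\cdot\|_{\bfM(\bfx)}$ and $\|\cdot\|_{\bfM(\xs)}$ (and likewise for $\bfA$ and $\bfK$), valid as long as $\bfx$ stays close to $\xs$ in a suitable $W^{1,\infty}$ sense, together with Lipschitz perturbation bounds of the type $\|(\bfM(\bfx)-\bfM(\xs))\bfw\|\le C\|\ex\|_{W^{1,\infty}}\|\bfw\|$ and analogous estimates for $\bff$ and $\bfg$. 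The energy argument tests the parabolic error equation for $\eu$ with $\doteu$ and $\eu$, the elliptic velocity error equation with $\ev$, and uses $\dotex=\ev$ to control $\ex$; Gronwall's lemma then closes the resulting coupled inequality. The specific difficulty of mean curvature flow is that the right-hand side of the velocity equation involves $\nabla_{\Gamma_h[\bfx]}(H_h\n_h)$, so controlling $\ev$ in $H^1$ forces control of $\eu$ in $H^1$ \emph{pointwise in time}, thereby coupling the elliptic estimate for $\ev$ to the parabolic energy estimate for $\eu$ in a nontrivial way. This is the step I expect to be by far the most delicate.

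\textbf{Consistency.} The defects will be bounded using the geometric error machinery of \cite{Dziuk88,Demlow2009} applied to the lifts via $\Gamma_h^*(t)=\Gamma_h[\xs(t)]$. Writing $\dv,\du$ as nodal vectors of finite element functions on $\Gamma_h^*(t)$ and lifting to $\Gamma(t)$, the weak-form residuals decompose into the Lagrange interpolation error of the smooth exact solution (of order $h^{k}$ in $H^1$ under the assumed $H^{k+1}$ regularity), the $O(h^{k+1})$ geometric error between $\Gamma_h^*(t)$ and $\Gamma(t)$, and an analogous consistency term for the discrete material derivative. Summed, these give $\normKt{\dv(t)}+\normKt{\du(t)}\le Ch^k$.

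\textbf{Bootstrap and conclusion.} Combining stability with consistency yields the claimed $O(h^k)$ error bounds on the maximal sub-interval $[0,t_h^*]$ on which $\|\ex(t)\|_{W^{1,\infty}}$ stays below the threshold required by the norm equivalences. The hypothesis $k\ge 2$ enters essentially here: an inverse estimate converts the $H^1$ bound of order $h^k$ on $\ex$ into a $W^{1,\infty}$ bound of order $h^{k-1}$, which for $k\ge 2$ and $h\le h_0$ is strictly smaller than the threshold, so a standard continuation argument extends $t_h^*$ to the full interval $[0,T]$. The stated $H^1$ error estimates on the exact surface $\Gamma(t)$ then follow by applying the lift $L$ of Section~\ref{subsec:lifts}, while the bound on $X_h^l-X$ is obtained from $\dotex=\ev$ together with $\ex(0)=0$.
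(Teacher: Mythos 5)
Your overall architecture — stability in the matrix--vector formulation tested against consistency, energy estimates with $\doteu$ to get pointwise-in-time $H^1$ control, bootstrap via the inverse inequality to preserve $W^{1,\infty}$ smallness, requiring $k\ge 2$ — matches the paper's programme. But two linked choices in the proposal break the argument, and both concern precisely the points where the paper departs from the routine template.

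First, you set $\us(t)$ to be the Lagrange interpolation of $(\nu,H)$; the paper instead defines $\us(t)$ through the Ritz projection \eqref{uhs-ritz}. This is not a stylistic choice. The defect $\du$ enters the energy estimate through the term $-\doteu^T\bfM(\xs)\du$, which the paper bounds by $\normM{\doteu}\normM{\du}$. Only $\normM{\doteu}$ (i.e.\ the $L^2$ norm) is controlled by the energy method, so one must have an $L^2$-norm bound on $\du$ of order $h^k$. In the weak-form residual for $\du$, the term containing $\nabla_{\Gamma_h[\xs]}\varphi_h$ survives under Lagrange interpolation and forces one into a dual $H^{-1}_h$-type bound, giving a factor $\normK{\doteu}$ that cannot be absorbed. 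The Ritz-map definition of $u_h^*$ is engineered exactly so that this term cancels against the $\bfM$-term (Lemma~\ref{lemma: semidiscrete residual}), yielding $\|\du\|_{\bfM(\xs)}\le ch^k$. Without this cancellation, the stability and consistency estimates do not mesh.

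Second, your proposed stability bound is phrased in terms of $\normKt{\dv}$ and $\normKt{\du}$, and your consistency claim is $\normKt{\dv}+\normKt{\du}\le Ch^k$. This is not what defect analysis can deliver: the defect is the $L^2$-Riesz representative of a weak-form residual, and after the standard decomposition into interpolation error and geometric perturbation error one obtains $h^k$ accuracy only in dual (for $\dv$) and $L^2$ (for $\du$, after the Ritz cancellation) norms, losing roughly an order of $h$ per extra derivative. The $H^1$ bound you posit for the defects is generically false, and conflating "interpolation error $O(h^k)$ in $H^1$" with "defect $O(h^k)$ in $H^1$" hides the gap. The correct stability statement (Proposition~\ref{proposition:stability - coupled problem}) uses $\|\dv\|_{\star,\xs}$ (a discrete $H^{-1}_h$ dual norm) and $\|\du\|_{\bfM(\xs)}$, together with the initial error $\|\eu(0)\|_{\bfK(\xs(0))}$, and the proof must carefully match the test functions to precisely these dual pairings: $\ev$ against the $H^{-1}_h$ norm of $\dv$, and $\doteu$ against the $L^2$ norm of $\du$. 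You would need to redo the consistency section around the Ritz map and adjust the stability statement to these weaker defect norms before the bootstrap can go through.
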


\bbk
Sufficient regularity assumptions are the following: with bounds that are uniform in $t\in[0,T]$, we assume $X(\cdot,t) \in  H^{k+1}(\Ga^0)$,
$v(\cdot,t) \in H^{k+1}(\Ga(X(\cdot,t)))$, and for $u=(\nu,H)$ we have $\ u(\cdot,t), \mat u(\cdot,t) \in W^{k+1,\infty}(\Ga(X(\cdot,t)))^4$.
%
\ebk

The proof of Theorem~\ref{MainTHM} is presented in Sections~\ref{section:stability} to~\ref{section: proof completed}, which are concerned with the stability and consistency of the numerical scheme, and the combination of the estimates.  The proof of stability uses techniques that were first developed in our paper \cite{KLLP2017} on the finite element discretization of an evolving surface driven by the solution of a parabolic equation on the surface.

Several of the observations regarding the convergence result \bbk Theorem~3.1 \ebk of \cite{KLLP2017} apply also to Theorem~\ref{MainTHM}, and so we restate them here:

A key issue in the proof is to ensure that the $W^{1,\infty}$ norm of the position error of the surfaces remains small. The $H^1$ error bound and an inverse estimate yield an $O(h^{k-1})$ error bound in the $W^{1,\infty}$ norm. This is small only for $k\ge 2$, which is why we impose the condition $k\ge 2$ in the above result.

Since the exact flow map $X(\cdot,t):\Gamma^0\to\Gamma(t)$ is assumed to be smooth and non-degenerate, it is locally close to an invertible linear transformation, and (using compactness) it therefore preserves the admissibility of grids with sufficiently small mesh width $h\le h_0$.  Our assumptions therefore guarantee that the triangulations formed by the nodes $x_j^*(t)=X(p_j,t)$ remain admissible uniformly for $t\in[0,T]$ for sufficiently small $h$ (though the bounds in the admissibility inequalities and the largest possible mesh width may deteriorate with growing time). 
\bcl
Since deformation is the gradient of position, the boundedness in $W^{1,\infty}$ of the position error (or equivalently, boundedness of deformation of the numerical flow map), which is ensured since the $O(h^k)$ error bound in $H^1$ and an inverse inequality yield an $O(h^{k-1})$ error bound in $W^{1,\infty}$,  excludes degeneration of the mesh on the numerical surface under the assumptions of the theorem.
\ecl


The error bound will be proved by clearly separating the issues of consistency and stability.
The consistency error is the  defect that results from inserting a projection of the exact solution (here we use finite element interpolation) into the discretized equation.
The defect bounds involve geometric estimates that were obtained for the time dependent case and for higher order $k\ge 2$ in \bbk \cite[Section~5]{Kovacs2017}, \ebk by combining techniques of Dziuk \& Elliott \cite{DziukElliott_ESFEM,DziukElliott_L2} and Demlow \cite{Demlow2009}. This is done within the ESFEM formulation of Section~\ref{subsection:semi-discretization}.

The main difficulty in the proof of Theorem~\ref{MainTHM} is to prove {\it stability} in the form of an $h$-independent bound of the error in terms of the defect in appropriate norms.  The stability analysis is  done in the matrix--vector formulation of Section~\ref{subsection:DAE}.  It uses energy estimates and transport formulae that relate the mass and stiffness matrices and the coupling terms for different nodal vectors~$\bfx$. No geometric estimates enter into the proof of stability. 
 
 While the basic approach to proving stability is the same as taken in \cite{KLLP2017}, a different kind of energy estimates needs to be used, which allows us to derive $H^1$ error bounds uniformly on the time interval (and from this, uniform $W^{1,\infty}$ bounds via the inverse inequality), and not just $L^2(H^1)$ estimates as derived in \cite{KLLP2017}. These sharper estimates are needed because the nonlinearities in our evolution system \eqref{weak form} are only {\it locally} Lipschitz continuous. (In \cite{KLLP2017}, global Lipschitz continuity of the nonlinearity $f(u,\nabla u)$ with respect to the second argument is assumed. This condition is used explicitly in the proof, although it was not clearly stated as an assumption in \cite{KLLP2017}.)
 
 \bcl As is explained in Remark~\ref{rem:Dziuk} below, our techniques do not yield a proof of stability and convergence of Dziuk's method.
 \ecl

\section{Linearly implicit full discretization}

For the time discretization of the system of ordinary differential equations \eqref{eq:matrix--vector form}--\eqref{matrix-form-X} we use a $q$-step linearly implicit backward difference formula (BDF) with $q \leq 5$. For a step size $\tau>0$, and with $t_n = n \tau \leq T$, we determine the approximations $\bfx^n$ to $\bfx(t_n)$, $\bfv^n$ to $\bfv(t_n)$, and $\bfu^n$ to $\bfu(t_n)$ by the fully discrete system of linear equations
\begin{equation}
\label{BDF}
	\begin{aligned}
		\bfK(\widetilde \bfx^n) \bfv^n &= \bfg(\widetilde \bfx^n,\widetilde \bfu^n) , \\
		\bfM(\widetilde \bfx^n) \frac{1}{\tau} \sum_{j=0}^q \delta_j \bfu^{n-j} + \bfA(\widetilde \bfx^n) \bfu^n &= \bff(\widetilde \bfx^n,\widetilde \bfu^n) ,  \\
	 \frac{1}{\tau} \sum_{j=0}^q \delta_j \bfx^{n-j}	 &=  \bfv^n,
	\end{aligned}
\end{equation}
where  $\widetilde \bfx^n$ and $\widetilde \bfu^n$ are the extrapolated values 
\begin{equation}
\label{eq:extrapolation def}
\widetilde \bfx^n = \sum_{j=0}^{q-1} \gamma_j \bfx^{n-1-j} , \qquad	\widetilde \bfu^n = \sum_{j=0}^{q-1} \gamma_j \bfu^{n-1-j} , \qquad n \geq q .
\end{equation}
The starting values $\bfx^i$ and $\bfu^i$ ($i=0,\dotsc,q-1$) are assumed to be given. They can be precomputed using either a lower order method with smaller step sizes or an implicit Runge--Kutta method.

The method is determined by its coefficients, given by $\delta(\zeta)=\sum_{j=0}^q \delta_j \zeta^j=\sum_{\ell=1}^q \frac{1}{\ell}(1-\zeta)^\ell$ and $\gamma(\zeta) = \sum_{j=0}^{q-1} \gamma_j \zeta^j = (1 - (1-\zeta)^q)/\zeta$. 
The classical BDF method is known to be zero-stable for $q\leq6$ and to have order $q$; see \cite[Chapter~V]{HairerWannerII}.
This order is retained by the linearly implicit variant using the above coefficients $\gamma_j$; 
cf.~\cite{AkrivisLubich_quasilinBDF,AkrivisLiLubich_quasilinBDF}.

We note that in the $n$th time step, the method requires solving two linear systems  with the symmetric positive definite matrices $\bfK(\widetilde \bfx^n)$ and $\frac{\delta_0}\tau \bfM(\widetilde \bfx^n) + \bfA(\widetilde \bfx^n)$.

From the vectors $\bfx^n =(x_j^n)$ and $\bfv^n = (v_j^n)$ we obtain  position and velocity approximations to $X(\cdot,t_n)$, 
${\rm id}_{\Gamma[X(\cdot,t_n)]}$, and $v(\cdot,t_n)$ as
\begin{equation}\label{x-v-approx}
  \begin{split}
	X_h^n(p_h) &= \sum_{j=1}^N x_j^n \, \phi_j[\bfx(0)](p_h) \quad\hbox{ for } p_h \in \Gamma_h^0, 
	\\   x_h^n & = {\rm id}_{\Gamma[X_h^n]} ,
	\\
        v_h^n(x)      &= \sum_{j=1}^N v_j^n \, \phi_j[\bfx^n](x)  \qquad\hbox{ for } x \in \Gamma_h[\bfx^n],
  \end{split}
\end{equation}
and from the vectors $\bfu^n=(u_j^n)$ with $u_j^n=(\n_j^n,H_j^n)\in\R^3\times \R$ we obtain approximations to the normal vector and the mean curvature at time $t_n$ as
\begin{equation}\label{n-H-approx}
  \begin{split}
        \n_h^n(x)      &= \sum_{j=1}^N \n_j^n \, \phi_j[\bfx^n](x)  \qquad\hbox{ for } x \in \Gamma_h[\bfx^n], \\
          H_h^n(x)      &= \sum_{j=1}^N H_j^n \, \phi_j[\bfx^n](x)  \qquad\hbox{ for } x \in \Gamma_h[\bfx^n].
  \end{split}
\end{equation}

\section{Convergence of the full discretization}

We are now in the position to formulate the second main result of this paper, which yields optimal-order error bounds for the combined ESFEM--BDF full discretization \eqref{BDF}--\eqref{n-H-approx} of the system of mean curvature equations \eqref{weak form} with \eqref{velocity}, for finite elements of polynomial degree $k\ge 2$ and BDF methods of order $q\le 5$.

\begin{theorem} \label{MainTHM-full} 
Consider the ESFEM--BDF full discretization \eqref{BDF}--\eqref{n-H-approx} of the coupled mean curvature flow problem  \eqref{weak form} with \eqref{velocity}, using evolving surface finite elements of polynomial degree~$k\ge 2$ and linearly implicit BDF time discretization of order \bcl $q$ with $2\le q\le 5$. \ecl
Suppose that the mean curvature flow problem admits an exact solution $(X,v,\n,H)$ that is sufficiently smooth on the time interval $t\in[0,T]$, and that the flow map $X(\cdot,t):\Gamma^0\to \Gamma(t)\subset\R^3$ is non-degenerate so that $\Gamma(t)$ is a regular surface on the time interval $t\in[0,T]$. 

Then, there exist  $h_0 >0$, $\tau_0>0$, and $c_0>0$ such that for all mesh sizes $h \leq h_0$  and time step sizes $\tau\le\tau_0$ satisfying the step size restriction 
\begin{equation} \label{stepsize-restriction}
{\bcl \tau \le C_0 h \ecl}
\end{equation}
(where $C_0>0$ can be chosen arbitrarily),
the following error bounds for the lifts of the discrete position, velocity, normal vector and mean curvature hold over the exact surface: provided that the starting values  are $O(h^k+\redon \tau^{q+1/2} \redoff)$ accurate in the $H^1$ norm at time $t_i=i\tau$ for $i=0,\dots,q-1$, we have at time $t_n=n\tau\le T$
\begin{align*}
        \|(x_h^n)^L - \mathrm{id}_{\Gamma(t_n)}\|_{H^1(\Ga(t_n))^3} &\leq C(h^k+\tau^q), \\
      \|(v_h^n)^L - v(\cdot,t_n)\|_{H^1(\Ga(t_n))^3} &\leq C(h^k+\tau^q), \\ 
        \|(\n_h^n)^L - \n(\cdot,t_n)\|_{H^1(\Ga(t_n))^3} &\leq C(h^k+\tau^q), \\ 
     \|(H_h^n)^L - H(\cdot,t_n)\|_{H^1(\Ga(t_n))} &\leq C(h^k+\tau^q), 
\end{align*}
and also
\begin{equation*}
\begin{aligned}
 \|(X_h^n)^l - X(\cdot,t_n)\|_{H^1(\Ga_0)^3} &\leq C(h^k+\tau^q),
\end{aligned}
\end{equation*}
where the constant $C$ is independent of $h$, $\tau$ and $n$ with $n\tau\le T$, but depends on bounds of higher derivatives of the solution $(X,v,\n,H)$ of the mean curvature flow and on the length $T$ of the time interval, and on $C_0$.
\end{theorem}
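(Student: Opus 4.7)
The plan is to follow the strategy of Theorem~\ref{MainTHM}, splitting the analysis into consistency (defect bounds) and stability (defect-to-error bounds) in the matrix--vector formulation of Section~\ref{subsection:DAE}, but now adapting both steps to the BDF time stepping. As in the semi-discrete case, the convergence proof will ultimately rest on a bootstrap argument that keeps the numerical surface $W^{1,\infty}$-close to the exact one, and here the CFL-type restriction $\tau\le C_0 h$ of \eqref{stepsize-restriction} is what allows this bootstrap to close.

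For the \emph{consistency} part, I would substitute the nodal finite element interpolations of the exact solution $(X,v,\nu,H)$ into the three fully discrete equations \eqref{BDF}, producing defects $\dx^n$, $\dv^n$, $\du^n$ in each of the equations. Each defect splits into a spatial part and a temporal part. The spatial part is bounded by $O(h^k)$ using exactly the same geometric consistency estimates (finite element interpolation error, Ritz map error, surface approximation lemmas) that were already assembled for the semi-discretization from \cite[Section~5]{Kovacs2017}. The temporal part comes from two sources: the $q$-step backward difference operator applied to smooth functions along the discrete material trajectory, which contributes $O(\tau^q)$ by Taylor expansion, and the evaluation of $\bff$ and $\bfg$ at the extrapolated arguments $\wtx^n, \wtu^n$, which contributes another $O(\tau^q)$ since $\gamma(\zeta)$ in \eqref{eq:extrapolation def} is exact on polynomials of degree $q-1$. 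The starting values are assumed accurate to $O(h^k+\tau^{q+1/2})$, so their contribution is absorbed.

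For the \emph{stability} part, the main new ingredient relative to the semi-discrete case is the Nevanlinna--Odeh multiplier technique for linearly implicit BDF methods, which is available precisely for $q\le 5$ and was used in an evolving-surface ESFEM--BDF context in \cite{AkrivisLiLubich_quasilinBDF}. There exist multipliers $\mu_0,\dots,\mu_{q-1}\ge 0$ with $\sum \mu_j<1$ such that testing the BDF difference $\frac1\tau \sum_{j=0}^q \delta_j \eu^{n-j}$ against $\eu^n - \sum_{j=0}^{q-1}\mu_j \eu^{n-1-j}$ in any fixed inner product yields a $G$-norm telescoping identity plus a nonnegative remainder. I would apply this multiplier to the error equation for $\eu$ in the $\bfM(\wtx^n)$ inner product, combine it with the error equation for $\ev$ tested with $\ev^n$ in the $\bfK(\wtx^n)$ inner product, and control the extrapolated errors $\wteu^n$, $\wtex^n$ by earlier errors via discrete Gronwall. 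Crucially, the transport-type identities that express the time dependence of $\bfM(\bfx)$ and $\bfA(\bfx)$ through the nodal velocity, which were proved in the semi-discrete analysis of Section~\ref{section:stability} without any geometric input, are reused here; in the fully discrete setting they give the finite difference quotients of the matrices in terms of $\bfv^n$ up to terms that are absorbed into the energy estimate. This produces the discrete analogue of the continuous stability estimate, bounding the accumulated $H^1$ error by the defects.

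The main obstacle is the same as in the semi-discrete case: the nonlinearities $\bff$ and $\bfg$ in \eqref{eq:matrix--vector form} are only locally Lipschitz, so the stability estimate holds only as long as the numerical surface stays $W^{1,\infty}$-close to the exact one. Here the step size restriction $\tau\le C_0 h$ is decisive: an $H^1$ bound of order $O(h^k+\tau^q)$ combined with the inverse inequality yields $O(h^{k-1}+\tau^q h^{-1})$ in $W^{1,\infty}$, which under \eqref{stepsize-restriction} becomes $O(h^{k-1})$ and is $o(1)$ for $k\ge 2$. I would therefore carry out the argument by strong induction on $n$: assuming the $W^{1,\infty}$ surface error is below a fixed threshold at all earlier steps, the consistency bounds and the stability estimate yield the $O(h^k+\tau^q)$ $H^1$ error bound at $t_n$, which via the inverse inequality keeps the $W^{1,\infty}$ error below the threshold at $t_n$. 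Lifting the resulting nodal error bounds back to $\Gamma(t_n)$ via the composed lift $L$ of Section~\ref{subsec:lifts} and using the norm equivalence between finite element functions on $\Gamma_h[\bfx^n]$ and their lifts on $\Gamma(t_n)$ finally yields the four error bounds stated in the theorem.
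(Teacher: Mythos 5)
Your overall architecture (consistency + stability in matrix--vector form + bootstrap to keep the discrete surface $W^{1,\infty}$-close, using $\tau\le C_0h$ to close the bootstrap) matches the paper, and the consistency step is described essentially correctly. The stability step, however, contains a gap that would make your argument fail precisely where the paper needs the sharper estimate.

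You propose to apply the Nevanlinna--Odeh multiplier by testing the error equation for $\eu$ against $\eu^n-\sum_j\mu_j\eu^{n-1-j}$ in the $\bfM(\wtx^n)$ inner product. That puts the Dahlquist $G$-norm telescoping on the \emph{mass} form and leaves the \emph{stiffness} form to produce $\tau\sum_n\|\eu^n\|_{\bfA}^2$ after summation. The result is a pointwise-in-$n$ bound only in $L^2$ together with an $\ell^2_\tau(H^1)$ bound --- the discrete analogue of $L^\infty(L^2)\cap L^2(H^1)$. That is exactly what \cite{KLLP2017} and \cite{KL2018} obtain, and it is not strong enough here: since $\bff$ and $\bfg$ are only \emph{locally} Lipschitz, the bootstrap needs a \emph{uniform-in-time} $W^{1,\infty}$ bound on $e_u^n$, which via the inverse inequality requires a pointwise-in-$n$ $H^1$ bound. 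An $\ell^2_\tau(H^1)$ bound does not control $\|e_u^n\|_{H^1}$ at a fixed $n$, and there is no way to recover the $W^{1,\infty}$ threshold from the $L^\infty(L^2)$ bound (an inverse inequality from $W^{1,\infty}$ to $L^2$ costs $h^{-2}$ in two dimensions, which is not affordable). This is exactly the distinction the paper flags for the semi-discretization, and it carries over to the full discretization unchanged.

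The paper resolves this by doing something structurally different: it forms the combination (equation at step $n$) $-\,\eta\,\cdot$(equation at step $n-1$) and then tests the result with the \emph{discrete time derivative} $\doteu^n=\frac1\tau\sum_{j=0}^q\delta_j\eu^{n-j}$, which is the fully discrete analogue of testing with $\doteu$ in the semi-discrete proof. The term $(\doteu^n)^T\bfA(\wtx^n)(\eu^n-\eta\eu^{n-1})$ then telescopes via Dahlquist $G$-stability \emph{in the $\bfA$-form}, and after summation one obtains a pointwise $\|\eu^n\|_{\bfA}^2$ on the left, i.e.\ a uniform-in-time $H^1$ bound. This choice also creates the troublesome term $(\doteu^n)^T(\bfA(\wtx^n)-\bfA(\wtx^{n-1}))\eu^{n-1}$, which is where the step-size restriction $\tau\le C_0h$ is used a second time (beyond the defect bound $\tau^q\lesssim h^\kappa$): the factor $\tau^2\|\doteu^n\|_{\bfA}^2$ is converted via an inverse inequality to $(\tau/h)^2\|\doteu^n\|_{\bfM}^2$, which can be absorbed. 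Your proposal identifies only the $W^{1,\infty}$-bootstrap use of $\tau\le C_0h$, not this absorption.

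Two smaller omissions worth naming: (a) the discrete analogue of the product-rule rewrite (the term $-\doteu^T(\bfA(\bfx)-\bfA(\xs))\us$ in part (iv) of the semi-discrete proof) needs a genuine summation-by-parts argument together with a uniform $W^{1,\infty}$ bound on the discrete velocity $\widetilde V_h^n=\be\,\wtx^n$, which the paper establishes by inverting the factor $\sigma(\zeta)$ of $\delta(\zeta)=(1-\zeta)\sigma(\zeta)$ and exploiting zero-stability; you only mention the transport identities in passing. (b) The paper uses the Ritz map for $u=(\nu,H)$ and interpolation only for $X,v$; your opening sentence of the consistency discussion suggests interpolating everything, though you mention the Ritz map later.
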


\bbk
Sufficient regularity assumptions are the following: uniformly in $t\in[0,T]$ and for $j=1,\dotsc,q+1$,
\begin{align*}
	&\ X(\cdot,t)  \in  H^{k+1}(\Ga^0), \ \pa_t^{j} X(\cdot,t) \in  H^{1}(\Ga^0)  , \\
	&\ v(\cdot,t)  \in H^{k+1}(\Ga(X(\cdot,t))),\ {\mat}^j v(\cdot,t) \in H^{2}(\Ga(X(\cdot,t)))  , \\
	\text{for } \ u=(\nu,H) , \quad &\ u(\cdot,t) , \mat u(\cdot,t) \in  W^{k+1,\infty}(\Ga(X(\cdot,t)))^4  , \\
	&\ {\mat}^j u(\cdot,t) \in  H^{2}(\Ga(X(\cdot,t)))^4  .
\end{align*}
\ebk

The proof of Theorem~\ref{MainTHM-full} is given in Sections~\ref{section:stability-full} to~\ref{section: proof completed - full}. It uses results by Dahlquist~\cite{Dahlquist} and Nevanlinna \& Odeh \cite{NevanlinnaOdeh} \bcl in combination with the transfer of techniques of the proof of 
Theorem~\ref{MainTHM} to the time-discrete situation.  
%
\ecl

\section{Stability of the semi-discretization}
\label{section:stability}

\subsection{Preparation: Estimates relating different finite element surfaces}
\label{section: aux}

In our previous work \cite[Section~4]{KLLP2017} we proved some technical results relating different finite element surfaces, which we recapitulate here.
We use the following setting.

The  finite element matrices of Section~\ref{subsection:DAE} induce discrete versions of Sobolev norms. Let $\bfx \in \R^{3 N}$ be a nodal vector defining the discrete surface $\Gamma_h[\bfx]$. For any nodal vector $\bfw=(w_j) \in \R^{N}$, with the corresponding finite element function $w_h= \sum_{j=1}^N w_j \phi_j[\bfx] \in S_h[\bfx]$, we define the following norms:
\begin{align} \label{M-L2}
	&  \|\bfw\|_{\bfM(\bfx)}^{2} = \bfw^T \bfM(\bfx) \bfw = \|w_h\|_{L^2(\Ga_h[\bfx])}^2 , \\
	\label{A-H1}
	&  \|\bfw\|_{\bfA(\bfx)}^{2} = \bfw^T \bfA(\bfx) \bfw = \|\nb_{\Ga_h[\bfx]} w_h\|_{L^2(\Ga_h[\bfx])}^2 , \\
	\label{K-H1}
	&  \|\bfw\|_{\bfK(\bfx)}^{2} = \bfw^T \bfK(\bfx) \bfw = \|w_h\|_{H^1(\Ga_h[\bfx])}^2 .
\end{align}
When $\bfw\in \R^{dN}$ so that the corresponding finite element function $w_h$ maps into $\R^d$, we write  in the following $\| w_h \|_{L^2(\Gamma)}$ for $\| w_h \|_{L^2(\Gamma)^d}$ and $\| w_h \|_{H^1(\Gamma)}$ for $\| w_h \|_{H^1(\Gamma)^d}$.

Let now $\bfx,\bfy \in \R^{3 N}$ be two nodal vectors defining discrete surfaces $\Gamma_h[\bfx]$ and $\Gamma_h[\bfy]$, respectively. 
We  denote the difference by $\bfe= (e_j)=\bfx-\bfy \in \R^{  3  N}$. For  $\theta\in[0,1]$, we consider the intermediate surface $\Gamma_h^\theta=\Gamma_h[\bfy+\theta\bfe]$ and the corresponding finite element functions given as
$$
	e_h^\theta=\sum_{j=1}^N e_j \phi_j[\bfy+\theta\bfe]
$$
and in the same way, for any vectors $\bfw,\bfz \in \R^N$,
$$
w_h^\theta=\sum_{j=1}^N w_j \phi_j[\bfy+\theta\bfe] \andquad z_h^\theta=\sum_{j=1}^N z_j \phi_j[\bfy+\theta\bfe] .
$$
\bbk
Figure~\ref{figure:relating different surfaces} illustrates the described construction.
\ebk

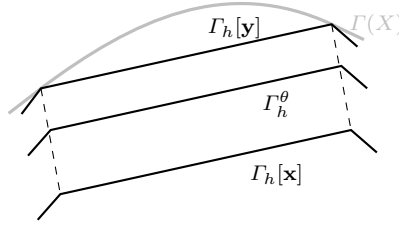
\begin{figure}[htbp]
	\begin{center}
	\begin{tikzpicture}[scale=0.85]
	\draw[very thick,lightgray] (0,4) to [out=37,in=155] (4.5,5);
	\draw[very thick,lightgray] (0,4) to [out=37,in=38] (-0.5,3.61);
	\draw[very thick,lightgray] (4.5,5) to [out=155,in=154] (5,4.76);
	\node[lightgray] at (5.2,5) {$\Ga(X)$};
	\draw[thick] (-0.3,3.6) -- (0,4) -- (4.5,5) -- (4.9,4.65);
	\node at (3.0,4.95) {$\Ga_h[\bfy]$};
	\draw[thick] (-0.2,2.95) -- (0.15,3.35) -- (4.65,4.35) -- (5.05,4.0);
	\node at (3.65,3.75) {$\Ga_h^\theta$};
	\draw[thick] (-0.05,1.95) -- (0.30,2.35) -- (4.80,3.35) -- (5.20,3.0);
	\node at (3.65,2.65) {$\Ga_h[\bfx]$};
	\draw[dashed] (0,4) -- (0.30,2.35);
	\draw[dashed] (4.5,5) -- (4.80,3.35);
	\end{tikzpicture}
	\caption{The construction of the intermediate surfaces $\Gamma_h^\theta$}
	\label{figure:relating different surfaces}
	\end{center}
\end{figure}

\bcl
The following formulae relate the mass and stiffness matrices for the discrete surfaces $\Gamma_h[\bfx]$ and $\Gamma_h[\bfy]$. They result from the Leibniz rule and are given in Lemma 4.1 of \cite{KLLP2017}.
\begin{lemma}
\label{lemma:matrix differences}   In the above setting, the following identities hold true:
	\begin{align}
	\label{eq:matrix difference M}
		\bfw^T (\bfM(\bfx)-\bfM(\bfy)) \bfz =&\ \int_0^1 \int_{\Ga_h^\theta} w_h^\theta (\nabla_{\Ga_h^\theta} \cdot e_h^\theta) z_h^\theta \; \d\theta, \\
	\label{eq:matrix difference A}
		\bfw^T (\bfA(\bfx)-\bfA(\bfy)) \bfz =&\ \int_0^1 \int_{\Ga_h^\theta} \nb_{\Ga_h^\theta} w_h^\theta \cdot (D_{\Ga_h^\theta} e_h^\theta)\nb_{\Ga_h^\theta}  z_h^\theta \; \d\theta ,
	\end{align}
	where
	$D_{\Ga_h^\theta} e_h^\theta =  \textnormal{tr}(E^\theta) I_3 - (E^\theta+(E^\theta)^T)$ with $E^\theta=\nabla_{\Ga_h^\theta} e_h^\theta \in \R^{3\times 3}$.
\end{lemma}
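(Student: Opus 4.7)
The plan is to apply the fundamental theorem of calculus along the segment $\theta\in[0,1]$ in nodal-coordinate space connecting $\bfy$ to $\bfx$, writing
\begin{equation*}
\bfw^T(\bfM(\bfx)-\bfM(\bfy))\bfz = \int_0^1 \frac{\d}{\d\theta}\,\bfw^T\bfM(\bfy+\theta\bfe)\bfz\,\d\theta,
\end{equation*}
and analogously for $\bfA$. By the very definitions of the finite element matrices,
\begin{equation*}
\bfw^T\bfM(\bfy+\theta\bfe)\bfz = \int_{\Gamma_h^\theta} w_h^\theta z_h^\theta \andquad \bfw^T\bfA(\bfy+\theta\bfe)\bfz = \int_{\Gamma_h^\theta}\nabla_{\Gamma_h^\theta} w_h^\theta\cdot\nabla_{\Gamma_h^\theta} z_h^\theta,
\end{equation*}
so the task reduces to differentiating these two surface integrals in $\theta$.

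The key observation is that $\theta$ plays the role of time for an artificial evolution of the discrete surface $\Gamma_h^\theta$ whose velocity is precisely the finite element function $e_h^\theta$: the nodes move as $(\d/\d\theta)(y_j+\theta e_j)=e_j$, and these are exactly the nodal values of $e_h^\theta$. Because the nodal coefficients of $w_h^\theta$ and $z_h^\theta$ do not depend on $\theta$, the discrete material derivative along this evolution vanishes, i.e.\ $\mat w_h^\theta=0$ and $\mat z_h^\theta=0$. The standard Reynolds transport identity, applied elementwise and summed, therefore yields
\begin{equation*}
\frac{\d}{\d\theta}\int_{\Gamma_h^\theta} w_h^\theta z_h^\theta = \int_{\Gamma_h^\theta} w_h^\theta z_h^\theta\,(\nabla_{\Gamma_h^\theta}\cdot e_h^\theta),
\end{equation*}
and integrating from $0$ to $1$ gives \eqref{eq:matrix difference M}.

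For \eqref{eq:matrix difference A} I would derive the analogous transport formula for the Dirichlet integral on each element by pulling it back to a fixed flat reference triangle. Only the surface Jacobian and the pulled-back inverse metric tensor $G(\theta)$ carry $\theta$-dependence, and their derivatives can be computed via the classical identities $(\d/\d\theta)\det G = \det G\,\textnormal{tr}(G^{-1}\dot G)$ and $(\d/\d\theta)G^{-1} = -G^{-1}\dot G\,G^{-1}$. Pushing the result back to $\Gamma_h^\theta$, the two contributions assemble into a single symmetric tensor acting on tangential gradients, and a direct calculation identifies it as $D_{\Gamma_h^\theta}(e_h^\theta)=\textnormal{tr}(E^\theta)I_3-(E^\theta+(E^\theta)^T)$: the trace term comes from the differentiated Jacobian (using $\nabla_{\Gamma_h^\theta}\cdot e_h^\theta=\textnormal{tr}(E^\theta)$), while the symmetrised-gradient term comes from the differentiated inverse metric projected back onto the tangent space.

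The main technical obstacle is precisely this last bookkeeping with the reference-element transformation: one has to verify carefully that $E^\theta$ enters as the full $3\times 3$ ambient tangential gradient (rather than some intrinsic $2\times 2$ object), and that the pull-back/push-back cancellations produce exactly the symmetric tensor $D_{\Gamma_h^\theta}(e_h^\theta)$ rather than a non-symmetric variant. Once the elementwise identity is established, the lemma follows by summing over elements of the triangulation and integrating from $\theta=0$ to $\theta=1$.
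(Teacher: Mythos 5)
Your proposal is correct and matches the approach the paper relies on: the paper states that these identities ``result from the Leibniz rule'' and cites Lemma~4.1 of \cite{KLLP2017}, i.e.\ precisely the observation you make that $\theta\mapsto\Gamma_h^\theta$ is an artificial surface evolution with velocity $e_h^\theta$, along which $w_h^\theta$ and $z_h^\theta$ have vanishing material derivative, so only the divergence and $D_{\Gamma_h^\theta}e_h^\theta$ terms survive in the transport formulae for the $L^2$ and Dirichlet forms. For \eqref{eq:matrix difference M} this is exactly the Reynolds transport theorem. For \eqref{eq:matrix difference A} you choose to re-derive the Dirichlet transport formula from scratch via pull-back to the reference element and differentiation of $\sqrt{\det G}$ and $G^{-1}$; this works and gives the stated $3\times3$ ambient form after pushing forward, but it is more labour than needed: the evolving-surface FEM literature already provides the transport identity $\frac{\d}{\d t}\int_\Gamma\nabla_\Gamma w\cdot\nabla_\Gamma z=\int_\Gamma\nabla_\Gamma\partial^\bullet w\cdot\nabla_\Gamma z+\nabla_\Gamma w\cdot\nabla_\Gamma\partial^\bullet z+\nabla_\Gamma w\cdot(D_\Gamma v)\nabla_\Gamma z$ with $D_\Gamma v=\mathrm{tr}(\nabla_\Gamma v)I_3-(\nabla_\Gamma v+(\nabla_\Gamma v)^T)$, which follows from the Leibniz rule together with the interchange formula $\partial^\bullet(\nabla_\Gamma f)=\nabla_\Gamma\partial^\bullet f-(\nabla_\Gamma v-\nu\nu^T(\nabla_\Gamma v)^T)\nabla_\Gamma f$ (cited in the paper as \cite[Lemma~2.6]{DziukKronerMuller}); the $\nu\nu^T$ pieces drop out upon pairing with tangential gradients, giving $D_\Gamma e$ directly without any reference-element bookkeeping. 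So your ``main technical obstacle'' can be sidestepped entirely by quoting that identity, but as written the route is sound.
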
	
%
The following lemma combines Lemmas 4.2 and 4.3 of \cite{KLLP2017}.
\begin{lemma} \label{lemma:theta-independence} In the above setting, if 
	$$
	\| \nabla_{\Gamma_h[\bfy]} e_h^0 \|_{L^\infty(\Gamma_h[\bfy])} \le \tfrac12,
	$$
	then, for $0\le\theta\le 1$ and $1\le p \le \infty$, the finite element function \\
	$w_h^\theta=\sum_{j=1}^N w_j \phi_j[\bfy+\theta\bfe]$ on $\Gamma_h^\theta=\Gamma_h[\bfy+\theta\bfe]$ is bounded by
	\begin{align*}
		&\| w_h^\theta \|_{L^p(\Gamma_h^\theta)} \leq c_p \, \|w_h^0 \|_{L^p(\Gamma_h^0)}
	\\
	&\| \nabla_{\Gamma_h^\theta} w_h^\theta \|_{L^p(\Gamma_h^\theta)} \le c_p \, \| \nabla_{\Gamma_h^0} w_h^0 \|_{L^p(\Gamma_h^0)}
	\end{align*}
	where $c_p$ is an absolute constant (in particular, independent of $0\le \theta\le 1$ and~$h$).	Moreover, $c_\infty=2$.
\end{lemma}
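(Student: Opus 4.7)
The plan is to reduce both bounds to change-of-variables estimates on a common reference simplex, using the piecewise-affine-in-$\theta$ structure of the family $\Gamma_h^\theta$.

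First I would introduce, element by element, the piecewise polynomial map $\Phi^\theta : \Gamma_h[\bfy] \to \Gamma_h^\theta$ given on each curved simplex $K\subset \Gamma_h[\bfy]$ by the finite element interpolant $\Phi^\theta = \mathrm{id} + \theta\, e_h^0$. By construction $\Phi^\theta$ sends the node $y_j$ (of $K$) to $y_j+\theta e_j$, and since $w_h^\theta$ and $w_h^0$ share the same nodal values $w_j$ and are built from the same reference basis functions via the respective nodal maps, the transport property
\begin{equation*}
	w_h^\theta \circ \Phi^\theta = w_h^0 \qquad \text{on each element of } \Gamma_h[\bfy]
\end{equation*}
holds identically. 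Equivalently, one may pull both $w_h^\theta$ and $w_h^0$ back to the flat reference simplex and observe that they become the \emph{same} polynomial in the reference coordinates; only the pushforward to physical space differs with $\theta$.

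Next I would estimate the tangential differential $D\Phi^\theta$, which on each element satisfies $D\Phi^\theta = I + \theta\, \nabla_{\Gamma_h[\bfy]} e_h^0$ in the sense of maps between tangent planes. The smallness hypothesis $\|\nabla_{\Gamma_h[\bfy]} e_h^0\|_{L^\infty} \le \tfrac12$ yields
\begin{equation*}
	\tfrac12 \le \|D\Phi^\theta\| \le \tfrac32, \qquad \|(D\Phi^\theta)^{-1}\| \le \frac{1}{1-\theta/2} \le 2,
\end{equation*}
uniformly in $\theta\in[0,1]$, so that the surface Jacobian $J_\theta = \det(D\Phi^\theta)$ and its reciprocal are bounded by absolute constants. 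In particular the bound $2$ on $\|(D\Phi^\theta)^{-1}\|$ gives the sharp constant $c_\infty = 2$.

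Finally, changing variables in the $L^p$ integral yields
\begin{equation*}
	\|w_h^\theta\|_{L^p(\Gamma_h^\theta)}^p = \int_{\Gamma_h[\bfy]} |w_h^0|^p\, J_\theta \le c_p^p \, \|w_h^0\|_{L^p(\Gamma_h[\bfy])}^p,
\end{equation*}
with $c_p$ absolute (and $c_\infty = 1$ for the function norm, since $L^\infty$ is invariant under this change of variables). For the gradient I would apply the chain rule to $w_h^\theta \circ \Phi^\theta = w_h^0$, obtaining
\begin{equation*}
	(\nabla_{\Gamma_h^\theta} w_h^\theta) \circ \Phi^\theta = (D\Phi^\theta)^{-T} \nabla_{\Gamma_h[\bfy]} w_h^0,
\end{equation*}
take the $L^p$ norm and change variables once more, using $\|(D\Phi^\theta)^{-T}\|_{L^\infty} \le 2$ and the Jacobian bound to absorb everything into a single absolute constant $c_p$.

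The main delicate point is the geometric one in the second paragraph: $\Gamma_h^\theta$ consists of \emph{curved} higher-order simplices whose tangent planes vary with $\theta$, so $D\Phi^\theta$ must be defined as a map between tangent planes of corresponding points rather than naively in $\R^3$. I would handle this by performing all computations on the planar reference element, where the parametrizations of $\Gamma_h[\bfy]$ and $\Gamma_h^\theta$ differ by the polynomial perturbation $\theta\, e_h^0$, and then transferring the inequalities back via the parametrization; this is the step where one really uses $k$-independence of the constants and where a careful bookkeeping of first fundamental forms is required.
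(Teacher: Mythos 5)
Your proposal is correct and follows essentially the same route as the paper: the paper does not give its own proof but cites Lemmas 4.2 and 4.3 of \cite{KLLP2017}, whose proofs proceed, as you do, by transporting to the reference element via the element-wise map $\Phi^\theta = \mathrm{id} + \theta e_h^0$, using the transport identity $w_h^\theta\circ\Phi^\theta = w_h^0$, bounding the tangential differential $I+\theta\nabla_{\Gamma_h[\mathbf y]}e_h^0$ and its inverse by $\tfrac32$ and $2$ from the smallness hypothesis, and then changing variables with the resulting Jacobian bounds. Your reading of $c_\infty=2$ (equality $c_\infty=1$ for the function estimate, the factor $2=\|(D\Phi^\theta)^{-1}\|$ entering only through the gradient) is consistent with the lemma as stated.
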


The first estimate is not stated explicitly in \cite{KLLP2017}, but follows immediately with the proof of Lemma~4.3 in \cite{KLLP2017}.

If $\| \nabla_{\Gamma_h[\bfy]} e_h^0 \|_{L^\infty(\Gamma_h[\bfy])}\le \frac14$, using the lemma for $w_h^\theta=e_h^\theta$ shows that
\begin{equation}\label{e-theta}
\| \nabla_{\Gamma_h^\theta} e_h^\theta \|_{L^\infty(\Gamma_h^\theta)} \le \tfrac12, \qquad 0\le\theta\le 1,
\end{equation}
and then the lemma with $p=2$ and the definition of the norms \eqref{M-L2} and \eqref{A-H1} (and interchanging the roles of $\bfy$ and $\bfy+\theta\bfe$) show that 
\begin{equation}
\label{norm-equiv}
	\begin{aligned}
		&\text{the norms $\|\cdot\|_{\bfM(\bfy+\theta\bfe)}$ are $h$-uniformly equivalent for $0\le\theta\le 1$,}
		\\
		&\text{and so are the norms $\|\cdot\|_{\bfA(\bfy+\theta\bfe)}$.}
	\end{aligned}
\end{equation}

Under the condition that 
$\eps := \| \nabla_{\Gamma_h[\bfy]} e_h^0 \|_{L^\infty(\Gamma_h[\bfy])}\le \tfrac14$, using \eqref{e-theta} in Lemma~\ref{lemma:matrix differences}  and applying the Cauchy-Schwarz inequality yields the bounds, with $c=c_\infty c_2^2$,
	\begin{equation}\label{eq:matrix difference bounds}
	\begin{aligned}
	\bfw^T (\bfM(\bfx)-\bfM(\bfy)) \bfz \leq&\ c \eps  \,\|\bfw\|_{\bfM(\bfy)}\|\bfz\|_{\bfM(\bfy)} , \\[1mm]
	\bfw^T (\bfA(\bfx)-\bfA(\bfy)) \bfz \leq&\ c \eps\, \|\bfw\|_{\bfA(\bfy)}\|\bfz\|_{\bfA(\bfy)} .
	\end{aligned}
	\end{equation}
We will also use similar bounds where we use the $L^\infty$ norm of $w_h$ or its gradient and the $L^2$ norm of the gradient of $e_h$.
	
Consider now a continuously differentiable function $\bfx:[0,T]\to\R^{3N}$ that defines a finite element surface $\Gamma_h[\bfx(t)]$ for every $t\in[0,T]$, and assume that its time derivative $\bfv(t)=\dot\bfx(t)$ is the nodal vector of a finite element function $v_h(\cdot,t)$ that satisfies
\begin{equation}\label{vh-bound}
\| \nabla_{\Gamma_h[\bfx(t)]}v_h(\cdot,t) \|_{L^{\infty}(\Gamma_h[\bfx(t)])} \le K, \qquad 0\le t \le T.
\end{equation}
With $\bfe=\bfx(t)-\bfx(s)=\int_s^t \bfv(r)\,dr$, 
the bounds \eqref{eq:matrix difference bounds}
then yield the following bounds, which were first shown in Lemma~4.1 of \cite{DziukLubichMansour_rksurf}: 

for $0\le s, t \le T$ with $K|t-s| \le \tfrac14$, 
we have with $C=c K$
	\begin{equation}\label{eq:matrix difference bounds-t}
	\begin{aligned}
	\bfw^T \bigl(\bfM(\bfx(t))  - \bfM(\bfx(s))\bigr)\bfz \leq&\ C \, |t-s| \, \|\bfw\|_{\bfM(\bfx(t))}\|\bfz\|_{\bfM(\bfx(t))} , \\[1mm]
	\bfw^T \bigl(\bfA(\bfx(t))  - \bfA(\bfx(s))\bigr)\bfz \leq&\ C\,  |t-s| \, \|\bfw\|_{\bfA(\bfx(t))}\|\bfz\|_{\bfA(\bfx(t))}.   
	\end{aligned}
	\end{equation}
%
%
%
%
%
Letting $s\to t$, this implies the bounds stated in Lemma~4.6 of~\cite{KLLP2017}:
	\begin{equation}\label{eq:matrix derivatives}
	\begin{aligned}
	\bfw^T \frac\d{\d t}\bfM(\bfx(t))  \bfz \leq&\ C  \,\|\bfw\|_{\bfM(\bfx(t))}\|\bfz\|_{\bfM(\bfx(t))} , \\[1mm]
	\bfw^T \frac\d{\d t}\bfA(\bfx(t))  \bfz \leq&\ C \, \|\bfw\|_{\bfA(\bfx(t))}\|\bfz\|_{\bfA(\bfx(t))} .
	\end{aligned}
	\end{equation}
Moreover, by patching together finitely many intervals over which $K|t-s| \le \tfrac14$, we obtain that
\begin{equation}
\label{norm-equiv-t}
	\begin{aligned}
		&\text{the norms $\|\cdot\|_{\bfM(\bfx(t))}$ are $h$-uniformly equivalent for $0\le t \le T$,}
		\\
		&\text{and so are the norms $\|\cdot\|_{\bfA(\bfx(t))}$.}
	\end{aligned}
\end{equation}\ecl

\subsection{Defects and errors}
\label{subsec:stability - error eqs}

We choose nodal vectors $\xs(t)\in\R^{3N}$, $\vs(t)\in\R^{3N}$ and $\us(t)\in\R^{4N}$ that are related to the exact solution $X$, $v$ and $u=(\n,H)$ as follows: $\xs(t)$ and $\vs(t)$ collect the values at the finite element nodes of $X(\cdot,t)$ and $v(\cdot,t)$, respectively.
The vector $\us(t)$ contains the nodal values of the finite element function $u_h^*(\cdot,t)\in S_h[\xs(t)]^4$ that is defined on the interpolated surface $\Gamma_h[\xs(t)]$ by a Ritz map: omitting the ubiquitous argument $t$,
\bbk \begin{equation}\label{uhs-ritz}
\int_{\Gamma_h[\xs]} \!\! \!\!\!\! \nb_{\Gamma_h[\xs]} u_h^* \cdot \nabla_{\Gamma_h[\xs]} \varphi_h + \int_{\Gamma_h[\xs]} \!\!\!\!  u_h^* \cdot \varphi_h =
\int_{\Gamma[X]} \!\! \!\!\!\! \nabla_{\Gamma[X]} u \cdot \nabla_{\Gamma[X]} \varphi_h^l + \int_{\Gamma[X]} \!\!\!\! u \cdot  \varphi_h^l
\end{equation} \ebk 
for all $\varphi_h \in S_h[\xs]^4$, where again $\varphi_h^l$ denotes the lift to a function on $\Gamma[X]$.

The nodal vectors $\bfx^*(t)$, $\vs(t)$, $\us(t)$ satisfy the  equations  \eqref{eq:matrix--vector form}  up to some defects $\bfd_{\bf v}$ and $\bfd_{\bf u}$ that will be studied in Section~\ref{section:Defect}: 
\begin{subequations}
\label{defect vectors}
	\begin{align}
		\label{dv} 
		\bfK^{[3]}(\xs)\vs &= \bfg(\xs,\us) +\bfM^{[3]} (\xs)\dv , \\
		\label{du}
		\bfM^{[4]}(\xs)\dotus+\bfA^{[4]}(\xs)\us &= \bff(\xs,\us) + \bfM^{[4]}(\xs) \du,
	\end{align}
\end{subequations}
together with the differential equation
\begin{align}
\label{matrix-form-X-star}
	\dotxs=\vs.
\end{align}

\bcl
In the following we omit the dimension superscripts  $[3]$ and $[4]$ on the matrices $\bfM,\bfA$ and $\bfK=\bfM + \bfA$.

 The errors between the nodal values of the numerical solutions and the exact values are denoted by $\ex=\bfx-\xs$, $\ev=\bfv-\vs$, $\eu=\bfu-\us$ and their corresponding finite element functions on the interpolated surface $\Gamma_h[\xs]$ by $e_x$, $e_v$, $e_u$, respectively. We obtain the error equations by subtracting \eqref{defect vectors} from \eqref{eq:matrix--vector form} and  \eqref{matrix-form-X-star} from \eqref{matrix-form-X}:
\begin{subequations} 
\label{eq:error equations}
\begin{align}
\label{eq:error eq - v}
	\bfK(\bfx) \ev 
	=&\ -  \big( \bfK(\bfx)-\bfK(\xs) \big) \vs  \\
	\nonumber 
	&\ + \big(\bfg(\bfx,\bfu) - \bfg(\xs,\us)\big) - \bfM(\xs)\dv , \\
\label{eq:error eq - u}
	\nonumber \bfM(\bfx)\doteu+ \bfA(\bfx)\eu 
	=&\ -  \big( \bfM(\bfx)-\bfM(\xs) \big) \dotus  \\
	&\ - \big( \bfA(\bfx)-\bfA(\xs) \big) \us \\
	\nonumber &\ + \big(\bff(\bfx,\bfu) - \bff(\xs,\us)\big) - \bfM(\xs)\du , \\
\label{eq:error eq - x}
    \dotex =&\ \ev . \qquad \qquad \qquad \qquad \qquad \qquad 
\end{align}
\end{subequations}
\ecl
We note further that $\ex(0)=0$ and $\ev(0)=0$, but in general $\eu(0)\ne 0$.

For estimating the defect $\dv$, we use the norm given by
\begin{equation*}
	\|\dv\|_{\star,\xs}^2 := \dv^T \bfM(\xs)\bfK(\xs)\inv \bfM(\xs) \dv .
\end{equation*}
By \cite[Eq.~(5.5)]{KLLP2017},  this equals
the following dual norm for the corresponding finite element function $d_v\in S_h[\bfx^*]^3$, which has  the vector of nodal values $\dv$,
$$
\|\dv\|_{\star,\xs} = \|d_v\|_{H_h\inv(\Gamma_h[\bfx^*])} := 
\sup_{0\ne\varphi_h\in S_h[\xs]^3} \frac{ \int_{\Gamma_h[\xs]} d_v  \cdot\varphi_h } { \| \varphi_h \|_{H^1(\Gamma_h[\xs])} } \, .
$$
For estimating the defect $\du$, which is the nodal vector of  a finite element function $d_u\in S_h(\bfx^*)^4$, we use the $L^2$ norm
$$
\| \du \|_{\bfM(\xs)}^2 = \du^T \bfM(\xs) \du = \| d_u \|_{L^2(\Gamma_h[\xs])}^2.
$$
The errors will be estimated in the $H^1$ norm:
$$
\normK{\bfe}^2 = \bfe^T  \bfK(\xs) \bfe = \| e \|_{H^1(\Gamma_h[\xs])}^2.
$$

\subsection{Stability estimate}

The following result provides the key stability estimate.

\begin{proposition}
\label{proposition:stability - coupled problem}
    Assume that, for some $\kappa$ with $1<\kappa \le k$, the defects are bounded by 
    \begin{equation}
    \label{eq:assumed defect bounds}
        \begin{aligned}
			\|\dv(t)\|_{\star,\xs(t)}  &\leq c h^\kappa , \\
			\|\du(t)\|_{\bfM(\xs(t))} &\leq c h^\kappa , 
        \end{aligned}
        \qquad \hbox{for }\  0\le t \le T ,
    \end{equation}
    and that also the errors in the initial values satisfy
    $$
     \| \eu(0) \|_{\bfK(\xs(0))} \le c h^\kappa.
    $$
    Then, there exists  $h_0>0$ such that the following stability estimate holds for all $h\leq h_0$ and $0\le t \le T$:
	\begin{equation}
	\label{eq:stability bound}
		\begin{aligned}
			& \| \ex(t) \|_{\bfK(\xs(t))}^2 +  \| \ev(t) \|_{\bfK(\xs(t))}^2 +  \| \eu(t) \|_{\bfK(\xs(t))}^2 \\
			& \leq  C   \| \eu(0) \|_{\bfK(\xs(0))}^2+C \max_{0\le s \le t}\|\dv(s)\|_{\star,\xs(s)}^2  
			+ C \int_0^t\normMs{\du(s)}^2 \d s ,
		\end{aligned}
	\end{equation}
	where $C$ is independent of $h$ and $t$, but depends on the final time $T$.
\end{proposition}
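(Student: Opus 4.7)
The plan is to combine a bootstrap argument on the $W^{1,\infty}$ smallness of the position error with energy estimates tailored to each of the three coupled error equations in \eqref{eq:error equations}. Let $t^\ast\in(0,T]$ be the maximal time such that $\|e_x(\cdot,t)\|_{W^{1,\infty}(\Gamma_h[\xs(t)])}\le h^{(\kappa-1)/2}$ on $[0,t^\ast]$; this condition is initially satisfied since $\ex(0)=0$. On $[0,t^\ast]$ the estimates of Section~\ref{section: aux}, in particular \eqref{eq:matrix difference bounds} and \eqref{norm-equiv-t}, imply that the norms induced by $\bfM(\bfx(t))$ and $\bfA(\bfx(t))$ are uniformly equivalent to those induced by $\bfM(\xs(t))$ and $\bfA(\xs(t))$, and moreover that $\bff$ and $\bfg$ are locally Lipschitz continuous at $(\xs,\us)$ with $h$-independent constants. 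The goal is then to close a Gronwall inequality for $\|\ex\|_{\bfK(\xs)}^2 + \|\ev\|_{\bfK(\xs)}^2 + \|\eu\|_{\bfK(\xs)}^2$ and, once \eqref{eq:stability bound} is available, to recover the bootstrap a posteriori from the inverse estimate $\|\cdot\|_{W^{1,\infty}}\le C h^{-1}\|\cdot\|_{H^1}$, using $\kappa>1$ to obtain strictness for sufficiently small $h$.

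Equation \eqref{eq:error eq - v} is an elliptic equation for $\ev$, so I would test it with $\ev$ and bound the three contributions on the right-hand side separately: the matrix difference $(\bfK(\bfx)-\bfK(\xs))\vs$ via \eqref{eq:matrix difference bounds} combined with the $W^{1,\infty}$ bound on $\vs$; the nonlinear difference $\bfg(\bfx,\bfu)-\bfg(\xs,\us)$ by local Lipschitz continuity; and the defect term by the duality identity $\ev^T\bfM(\xs)\dv\le \|\ev\|_{\bfK(\xs)}\,\|\dv\|_{\star,\xs}$. This produces the pointwise bound
\begin{equation*}
\|\ev(t)\|_{\bfK(\bfx(t))}\le C\bigl(\|\ex(t)\|_{\bfK(\xs(t))} + \|\eu(t)\|_{\bfK(\xs(t))} + \|\dv(t)\|_{\star,\xs(t)}\bigr).
\end{equation*}
Integrating $\dotex=\ev$ in time, together with $\ex(0)=0$ and the uniform norm equivalence, then controls $\|\ex(t)\|_{\bfK(\xs(t))}$ by the time integral of the right-hand side above.

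The core of the argument, and the principal novelty relative to \cite{KLLP2017}, is to obtain an $L^\infty$-in-time $H^1$ bound for $\eu$ rather than only an $L^2(H^1)$ bound. I would test the parabolic error equation \eqref{eq:error eq - u} with the combination $\eu+\doteu$, producing the identity
\begin{equation*}
\tfrac12\tfrac{d}{dt}\|\eu\|_{\bfK(\bfx)}^2 + \|\eu\|_{\bfA(\bfx)}^2 + \|\doteu\|_{\bfM(\bfx)}^2 = \tfrac12\,\eu^T\bigl(\dot\bfM(\bfx)+\dot\bfA(\bfx)\bigr)\eu + (\eu+\doteu)^T R,
\end{equation*}
where $R$ denotes the right-hand side of \eqref{eq:error eq - u}. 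The cross derivative terms are controlled by \eqref{eq:matrix derivatives}, and every contribution to $(\eu+\doteu)^T R$ is then estimated by taking advantage of the freedom in Lemma~\ref{lemma:matrix differences} to place the $L^\infty$ factor on the exact-solution quantities $\dotus$ and $\us$, by local Lipschitz continuity for the $\bff$-difference, and by Cauchy--Schwarz for the defect. The main obstacle is the pairing $\doteu^T(\bfA(\bfx)-\bfA(\xs))\us$, which naturally produces a factor $\|\doteu\|_{\bfA(\bfx)}$ not controlled by the left-hand side of the energy identity. I would resolve it by using tangential integration by parts on the closed surface to transfer a derivative from $\doteu$ onto $\us$ (exploiting the higher regularity of $\us$), so that after Young's inequality the contribution is absorbed into $\tfrac12\|\doteu\|_{\bfM(\bfx)}^2+ C\|\ex\|_{\bfA(\xs)}^2$.

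Assembling the three bounds into a single differential inequality for $\|\ex\|_{\bfK(\xs)}^2+\|\ev\|_{\bfK(\xs)}^2+\|\eu\|_{\bfK(\xs)}^2$ and applying Gronwall's lemma, together with $\ex(0)=0$ and the given bound on $\|\eu(0)\|_{\bfK(\xs(0))}$, produces \eqref{eq:stability bound} on $[0,t^\ast]$. The derived $H^1$ error bound is of order $h^\kappa$, so the inverse inequality yields a $W^{1,\infty}$ bound of order $h^{\kappa-1}$, strictly smaller than $h^{(\kappa-1)/2}$ for $h\le h_0$ sufficiently small. Hence $t^\ast=T$, and the stability estimate holds on the full interval $[0,T]$.
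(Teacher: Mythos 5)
Your overall plan — bootstrap $W^{1,\infty}$ smallness, test the velocity equation with $\ev$, test the parabolic equation with a combination producing $\|\doteu\|_{\bfM}^2$ on the left, then close with Gronwall and an inverse estimate — matches the paper's strategy, and your energy identity from testing with $\eu+\doteu$ is correct. But there are two genuine gaps.

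First, your bootstrap is stated only on $\|e_x(\cdot,t)\|_{W^{1,\infty}}$. The proof actually needs all three bounds in \eqref{eq:assumed bounds - 2}: the local Lipschitz estimates for $\bff$ and $\bfg$ require $u_h=u_h^*+e_u$ and $\nabla_{\Gamma_h} u_h$ to lie in an $h$-independent bounded set, so you also need $e_u$ small in $W^{1,\infty}$; and the uniform norm equivalence \eqref{norm-equiv-t} for the matrices $\bfM(\bfx(t))$, $\bfA(\bfx(t))$ comes through the discrete velocity bound \eqref{vh-bound}, which requires $v_h=v_h^*+e_v$ bounded in $W^{1,\infty}$, hence $e_v$ small in $W^{1,\infty}$ as well. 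This is easily repaired (take the maximal $t^*$ for which all three inequalities hold), but without it the intermediate estimates you quote do not have $h$-independent constants.

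Second, and more seriously, the proposed resolution of the problematic term $\doteu^T(\bfA(\bfx)-\bfA(\xs))\us$ by ``tangential integration by parts to transfer a derivative from $\doteu$ onto $\us$'' would not work. Via Lemma~\ref{lemma:matrix differences} this term is a $\theta$-integral of
$\int_{\Gamma_h^\theta}\nabla_{\Gamma_h^\theta}\dot e_u^\theta\cdot(D_{\Gamma_h^\theta}e_x^\theta)\nabla_{\Gamma_h^\theta}u_h^{*,\theta}$,
and every factor here is a piecewise polynomial finite element function on the piecewise smooth surface $\Gamma_h^\theta$. Integrating by parts would produce second derivatives of $e_x^\theta$ and $u_h^{*,\theta}$ and jump terms across element boundaries; these are not globally controlled (the Ritz projection $u_h^*$ is not in $H^2(\Gamma_h^\theta)$), so the factor $\|\doteu\|_{\bfA}$ is not absorbed this way. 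The paper instead uses the product rule \emph{in time}: it rewrites
\begin{equation*}
-\doteu^T\bigl(\bfA(\bfx)-\bfA(\xs)\bigr)\us
= -\frac{\d}{\d t}\Bigl[\eu^T\bigl(\bfA(\bfx)-\bfA(\xs)\bigr)\us\Bigr]
+\eu^T\bigl(\bfA(\bfx)-\bfA(\xs)\bigr)\dotus
+\eu^T\frac{\d}{\d t}\bigl(\bfA(\bfx)-\bfA(\xs)\bigr)\us ,
\end{equation*}
so that after integrating the energy inequality over $[0,t]$ the total-derivative term becomes a boundary term absorbable by Young's inequality into $\|\eu(t)\|_{\bfA}^2$ and $\|\ex(t)\|_{\bfK}^2$, and the remaining two terms involve only $\eu$ (not $\doteu$) paired against quantities controlled by $\ex$ and $\ev$ (the $\theta$-integral representation of $\frac{\d}{\d t}(\bfA(\bfx)-\bfA(\xs))$ in \eqref{wTdtAz} requires the intermediate surface velocity $v_h^{*,\theta}+\theta e_v^\theta$ and the formulae \eqref{eq:mat-grad formula}--\eqref{eq:mat-D formula}). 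This time-differentiation trick, not spatial integration by parts, is the key device that makes the argument close, and it is precisely the step missing from your proposal.
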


From the interpolation error bound of Proposition 2.7 in \cite{Demlow2009} and the Ritz map error bound of \bbk Theorem~6.3 \ebk in \cite{Kovacs2017} we know that $ \| \eu(0) \|_{\bfK(\xs(0))} = O(h^k)$.
We note that therefore the error functions $e_x(\cdot,t), e_v(\cdot,t), e_u(\cdot,t) \in S_h[\xs\t]$ corresponding to the nodal values $\ex, \ev, \eu$, respectively, are bounded by
\begin{equation*} 
	\begin{aligned}
		\|e_x(\cdot,t)\|_{H^1(\Gamma_h[\xs\t])}
		&\ + \|e_v(\cdot,t)\|_{H^1(\Gamma_h[\xs\t])} \\
		&\ +\|e_u(\cdot,t)\|_{H^1(\Gamma_h[\xs\t])} \leq C h^\kappa .
	\end{aligned}
\end{equation*}
In Section~\ref{section:Defect} we will show that the assumed defect bounds \eqref{eq:assumed defect bounds} are indeed satisfied with $\kappa = k \geq 2$, provided the exact solution is sufficiently smooth.

\begin{proof}
The basic procedure of the proof follows that of \cite{KLLP2017}: it uses energy estimates for the error equations in the matrix--vector formulation and  relies on the technical lemmas of Section~\ref{section: aux}. Since we need uniform-in-time $H^1$-norm error bounds (in order to control the $W^{1,\infty}$ norm of the errors in $u$ via an inverse inequality), we test with the time derivative of the error vector. In contrast, in \cite{KLLP2017} we just tested with the error vector as this allowed us to obtain $L^2(H^1)$-norm estimates for $e_u$, which was sufficient for a nonlinearity $f(u,\nabla_\Gamma u)$ that is globally Lipschitz-bounded in the second variable. This global Lipschitz continuity is, however, not satisfied for the nonlinearity considered here.
	
We test \eqref{eq:error eq - v} with $\ev$, which yields
\begin{equation}
\label{eq:error eq tested - v}
	\begin{aligned}
		\|\ev\|_{\bfK(\bfx)}^2 = \ev^T \bfK(\bfx) \ev 
		=&\ -  \ev^T \big( \bfK(\bfx)-\bfK(\xs) \big) \vs  \\
		&\ + \ev^T \big(\bfg(\bfx,\bfu) - \bfg(\xs,\us)\big) - \ev^T \bfM(\xs)\dv,
	\end{aligned}
\end{equation}
while we test \eqref{eq:error eq - u} with $\doteu$, which yields 
\begin{equation} 
\label{eq:error eq tested - u}
	\begin{aligned}
		\doteu^T\bfM(\bfx)\doteu+ \doteu^T\bfA(\bfx)\eu 
		=&\ -  \doteu^T\big( \bfM(\bfx)-\bfM(\xs) \big) \dot{\bf u}^*  \\
		&\ - \doteu^T\big( \bfA(\bfx)-\bfA(\xs) \big) \us \\
		&\ + \doteu^T\big(\bff(\bfx,\bfu) - \bff(\xs,\us)\big) \\
		&\ - \doteu^T\bfM(\xs)\du .
	\end{aligned}
\end{equation}

Let $t^*\in(0,T]$ be the maximal time such that the following inequalities hold:
\begin{equation}
\label{eq:assumed bounds - 2}
    \begin{aligned}
    	\| e_x(\cdot,t)\|_{W^{1,\infty}(\Ga_h[\xs(t)])} &\leq h^{(\kappa-1)/2} , \\
    	\| e_v(\cdot,t)\|_{W^{1,\infty}(\Ga_h[\xs(t)])} &\leq h^{(\kappa-1)/2} , \\
        \|e_u(\cdot,t) \|_{W^{1,\infty}(\Ga_h[\xs(t)])} &\leq h^{(\kappa-1)/2} , 
    \end{aligned} \qquad \textrm{ for } \quad t\in[0,t^*].
\end{equation}
Note that $t^*>0$ since initially $e_x(\cdot,0)=0, e_v(\cdot,0)=0$ and, by an inverse inequality and the higher-order error bound for the Ritz map \cite{Kovacs2017}, we have $\|e_u(\cdot,0)\|_{W^{1,\infty}(\Ga_h[\xs(0)])}\le ch^{-1}\|e_u(\cdot,0)\|_{H^{1}(\Ga_h[\xs(0)])}\le Ch^{k-1}$. 
We first prove the stated error bounds for $0\leq t \leq t^*$. At the end of the proof we will show that in fact $t^*$ coincides with $T$.

\bcl Since the exact surface functions $X,v,u$ are assumed smooth, the associated finite element functions $x_h^*(\cdot,t),v_h^*(\cdot,t),u_h^*(\cdot,t)$ on the interpolated surface 
$\Gamma_h[\xs(t)]$ have $W^{1,\infty}$ norms that are bounded independently of $h$, for all $t\in[0,T]$. \bbk 
The bounds~\eqref{eq:assumed bounds - 2} together with Lemma~\ref{lemma:theta-independence}
then imply that the $W^{1,\infty}$ norms of the ESFEM functions $x_h(\cdot,t),v_h(\cdot,t),u_h(\cdot,t)$ on the discrete surface $\Gamma_h[\bfx(t)]$ are also bounded independently of $h$ and $t\in[0,t^*]$, and so are their lifts to the interpolated surface $\Gamma_h[\xs(t)]$. In particular,  it will be important that the discrete velocity $v_h$ with nodal vector $\bfv=\dot\bfx$ satisfies \eqref{vh-bound}.

In the following $c$ and $C$ are generic constants that take different values on different occurrences.

(A) {\it Estimates for the surface PDE:}
We estimate the terms of \eqref{eq:error eq tested - u} separately, with \bcl Lemmas~\ref{lemma:matrix differences} and~\ref{lemma:theta-independence} and the ensuing bounds \eqref{norm-equiv}--\eqref{eq:matrix derivatives} as our main tools.  The condition of Lemma~\ref{lemma:theta-independence} follows from the first estimate in \eqref{eq:assumed bounds - 2}, $\| e_x(\cdot,t)\|_{W^{1,\infty}(\Ga_h[\xs(t)])} \leq h^{(\kappa-1)/2} \leq \frac12$ for $h \leq h_0$ sufficiently small. 

(i) By the definition of the $\bfM$-norm we have
\begin{align*}
  \doteu^T \bfM(\bfx)\doteu = \normMnum{\doteu}^2 .
\end{align*}

(ii) 
The symmetry of $\bfA(\bfx)$ and the bound \eqref{eq:matrix derivatives} yield  
\begin{align*}
	\doteu^T \bfA(\bfx)\eu 
	&=- \half \eu^T \diff \big(\bfA(\bfx)\big) \eu 
	+ \half \diff \Big(\eu^T \bfA(\bfx)\eu\Big)  \\
	&\ge -c \normAnum{\eu}^2  
	+ \half \diff \normAnum{\eu}^2 .
\end{align*}

(iii) Using \bcl Lemmas~\ref{lemma:matrix differences} and \ref{lemma:theta-independence}, we now show
for the first term on the right-hand side that
\begin{align} \label{eq:est-first}
	-\doteu^T  \big( \bfM(\bfx)-\bfM(\xs) \big) \dotus 
	&\le C\,  \normM{\doteu}  \normA{\ex}
\end{align}
with a constant $C$ that is independent of $h$.

By Lemma~\ref{lemma:matrix differences} we have with $\Gamma_h^\theta=\Gamma_h[\xs+\theta\ex]$
\begin{align*}
-\doteu^T  \big( \bfM(\bfx)-\bfM(\xs) \big) \dotus  &= 
- \int_0^1 \int_{\Gamma_h^\theta} 
\dot e_u^\theta\, (\nabla_{\Gamma_h^\theta}\cdot e_x^\theta) \,\partial_h^\bullet u_h^{*,\theta}\,\d\theta 
\\
&\le  \int_0^1 \|  \dot e_u^\theta \|_{L^2(\Gamma_h^\theta)}\,
\| \nabla_{\Gamma_h^\theta}\cdot e_x^\theta  \|_{L^2(\Gamma_h^\theta)}\,
\| \partial_h^\bullet u_h^{*,\theta}  \|_{L^\infty(\Gamma_h^\theta)}\, \d\theta.
\end{align*}
Here we note that by Lemma~\ref{lemma:theta-independence} together with the bound \eqref{eq:assumed bounds - 2},
\begin{align*}
\| \nabla_{\Gamma_h^\theta}\cdot e_x^\theta  \|_{L^2(\Gamma_h^\theta)} &\le
\| \nabla_{\Gamma_h^\theta} e_x^\theta  \|_{L^2(\Gamma_h^\theta)} 
\\
&\le
c_2 \,\| \nabla_{\Gamma_h^0} e_x^0  \|_{L^2(\Gamma_h^0)} =
c_2 \,\| \nabla_{\Gamma_h[\xs]} e_x  \|_{L^2(\Gamma_h[\xs])}
\end{align*}
and also
$$
\|  \dot e_u^\theta \|_{L^2(\Gamma_h^\theta)} \le c_2 \,\|  \dot e_u^0 \|_{L^2(\Gamma_h^0)} =
c_2\, \|  \dot e_u \|_{L^2(\Gamma_h[\xs])}
$$
and 
$$
\| \partial_h^\bullet u_h^{*,\theta}  \|_{L^\infty(\Gamma_h^\theta)} \le
c_\infty\, \| \partial_h^\bullet u_h^{*,0}  \|_{L^\infty(\Gamma_h^0)} =
c_\infty\, \| \partial_h^\bullet u_h^{*}  \|_{L^\infty(\Gamma_h[\xs])}.
$$
We now show that the last term is bounded by a constant that depends only on the solution regularity.
For the material derivative of the Ritz map we have
\begin{align*}
&\| \partial_h^\bullet u_h^{*} \|_{L^\infty(\Gamma_h[\xs])} \le 
c \| (\partial_h^\bullet u_h^{*})^l  \|_{L^\infty(\Gamma[X(\cdot,t)])}
 \\
&\le c \| (\partial_h^\bullet u_h^{*})^l  - I_h \partial^\bullet u\|_{L^\infty(\Gamma[X(\cdot,t)])} 
\\&\quad +
c \| I_h  \partial^\bullet u -  \partial^\bullet u \|_{L^\infty(\Gamma[X(\cdot,t)])}
+ c \|   \partial^\bullet u \|_{L^\infty(\Gamma[X(\cdot,t)])} \\
&\le
 \frac c h \| (\partial_h^\bullet u_h^{*})^l  - I_h \partial^\bullet u\|_{L^2(\Gamma[X(\cdot,t)])} 
 \\ &\quad +
c \| I_h  \partial^\bullet u -  \partial^\bullet u \|_{L^\infty(\Gamma[X(\cdot,t)])}
+ c \|    \partial^\bullet u \|_{L^\infty(\Gamma[X(\cdot,t)])}
\\
&\le  \frac c h \| (\partial_h^\bullet u_h^{*})^l  -  \partial^\bullet u\|_{L^2(\Gamma[X(\cdot,t)])} +
\frac c h \|  \partial^\bullet u - I_h \partial^\bullet u\|_{L^2(\Gamma[X(\cdot,t)])} 
\\ &\quad +
c \| I_h  \partial^\bullet u -  \partial^\bullet u \|_{L^\infty(\Gamma[X(\cdot,t)])}
+ c \|   \partial^\bullet u \|_{L^\infty(\Gamma[X(\cdot,t)])}
\\
&\le Ch^{k-1} + Ch^k + Ch^k +  C,
\end{align*}
where we used the norm equivalence for the lift operator (see \cite{Demlow2009}) in the first inequality,
an inverse inequality in the second inequality, and the known error bounds for interpolation (see \cite[Proposition~2]{Demlow2009}) 
and for the Ritz map (see \cite[Theorem~6.3]{Kovacs2017}) in the last inequality.
Combining the estimates above yields 
\begin{align*}
-\doteu^T  \big( \bfM(\bfx)-\bfM(\xs) \big) \dotus &\le C \, \|  \dot e_u \|_{L^2(\Gamma_h[\xs])}\, \| \nabla_{\Gamma_h[\xs]} e_x  \|_{L^2(\Gamma_h[\xs])} 
\\
&= C\,  \normM{\doteu}  \normA{\ex},
\end{align*}
which is \eqref{eq:est-first}.

(iv)  Direct estimation of the second term on the right-hand side of \eqref{eq:error eq tested - u}, like in (iii), would yield a bound with a factor $\normA{\doteu}$, which cannot be controlled as required. The second term is therefore first rewritten, using the product rule of differentiation as 
\begin{align} \label{RA1}
	 - \doteu^T \big( \bfA(\bfx)-\bfA(\xs) \big) \us =& -  \frac{\d}{\d t} \Big(\eu^T \big( \bfA(\bfx) \! - \! \bfA(\xs) \big) \us\Big)
	\\
	\nonumber
	&\  
	+\eu^T \big( \bfA(\bfx) \! - \! \bfA(\xs) \big) \dotus \! + \eu^T \frac{\d}{\d t}\big( \bfA(\bfx) \! - \! \bfA(\xs) \big) \us \! .
\end{align}
Among these terms, the first term on the second line is estimated via \bcl Lemmas~\ref{lemma:matrix differences} and \ref{lemma:theta-independence}, like in (iii), as
$$
\eu^T \big( \bfA(\bfx) \! - \! \bfA(\xs) \big) \dotus \le C\, \normA{\eu} \,\normA{\ex}.
$$
\noindent The last term is rewritten, using Lemma~\ref{lemma:matrix differences} and the Leibniz formula, as \ecl
\begin{equation}
\label{wTdtAz}
	\begin{aligned}
		\bfw^T \Big(\diff \big( \bfA(\bfx)-\bfA(\xs) \big) \Big) \bfz 
		&=\frac{\d}{\d t}\int_0^1 \int_{\Ga_h^\theta} \nb_{\Ga_h^\theta} w_h^\theta \cdot (D_{\Ga_h^\theta} e_x^\theta) \nb_{\Ga_h^\theta}  z_h^\theta \ \d\theta  \\
		&= 
		\int_0^1 \int_{\Ga_h^\theta} \mat_{\Gamma_h^\theta} \big(\nb_{\Ga_h^\theta} w_h^\theta \big) \cdot (D_{\Ga_h^\theta} e_x^\theta)  \nb_{\Ga_h^\theta}  z_h^\theta \ \d\theta  \\
		&\quad +
		\int_0^1 \int_{\Ga_h^\theta}  \nb_{\Ga_h^\theta} w_h^\theta \cdot \mat_{\Gamma_h^\theta} \big( D_{\Ga_h^\theta} e_x^\theta \big) \nb_{\Ga_h^\theta}  z_h^\theta \ \d\theta  \\
		&\quad 
		+ \int_0^1 \int_{\Ga_h^\theta}  \nb_{\Ga_h^\theta} w_h^\theta \cdot (D_{\Ga_h^\theta} e_x^\theta) \mat_{\Gamma_h^\theta} \big( \nb_{\Ga_h^\theta}  z_h^\theta \big) \ \d\theta  \\
		&\quad 
		+ \int_0^1 \int_{\Ga_h^\theta}  \nb_{\Ga_h^\theta} w_h^\theta \cdot (D_{\Ga_h^\theta} e_x^\theta) \nb_{\Ga_h^\theta}  z_h^\theta \, (\nb_{\Gamma_h^\theta} \cdot \blueon v_{\Ga_h^\theta}\blueoff) \d\theta \\
		&=: \int_0^1 (J_1^\theta+J_2^\theta +J_3^\theta+J_4^\theta)\,\d\theta .
	\end{aligned}
\end{equation}
Here, \bcl $v_{\Ga_h^\theta}(\cdot,t)$ \ecl is the velocity of $\Gamma_h^\theta(t)$ (as a function of $t$), which is the finite element function in $S_h[\xs(t)+\theta \ex(t)]$ with nodal vector $\dotxs(t) + \theta \dot\bfe_\bfx(t) = \vs(t) + \theta \ev(t)$. Related to this velocity, $\mat_{\Gamma_h^\theta}$ denotes the material derivative on ${\Gamma_h^\theta}$.  We denote by $w_h^\theta(\cdot,t)$ and $z_h^\theta(\cdot,t)$ the finite element functions on $\Gamma_h^\theta(t)$ with the time-independent nodal vectors $\bfw$ and $\bfz$, respectively.  We thus have
\begin{equation}
\label{vhtheta}
	\bcl v_{\Ga_h^\theta} \ecl = v_h^{*,\theta} + \theta e_v^\theta .
\end{equation} 
By using the identity (see \cite[Lemma~2.6]{DziukKronerMuller}) 
\begin{equation}
\label{eq:mat-grad formula}
	\begin{aligned} 
		\mat_{\Gamma_h^\theta} (\nabla_{\Gamma_h^\theta} w_h^\theta ) 
		&= \nabla_{\Gamma_h^\theta} \mat_{\Gamma_h^\theta}w_h^\theta 
		-  
		\bigl(\nabla_{\Gamma_h^\theta} v_{\Ga_h^\theta} -	\nu_h^\theta(\nu_h^\theta)^T (\nabla_{\Gamma_h^\theta } v_{\Ga_h^\theta} )^T \bigr) \nabla_{\Gamma_h^\theta} w_h^\theta \\
		&= 
		- 
		\bigl(\nabla_{\Gamma_h^\theta} v_{\Ga_h^\theta} -	\nu_h^\theta(\nu_h^\theta)^T(\nabla_{\Gamma_h^\theta } v_{\Ga_h^\theta} )^T \bigr) \nabla_{\Gamma_h^\theta} w_h^\theta , 
	\end{aligned}
\end{equation}
where we have used the property $\mat_{\Gamma_h^\theta}w_h^\theta=0$ in the last equality, we obtain 
\begin{align*} 
	|J_1^\theta| &\leq
	\int_0^1 \int_{\Ga_h^\theta} \Big| \Big( \bigl(\nabla_{\Gamma_h^\theta} v_{\Ga_h^\theta} -	\nu_h^\theta(\nu_h^\theta)^T(\nabla_{\Gamma_h^\theta } v_{\Ga_h^\theta} )^T \bigr) \nabla_{\Gamma_h^\theta} w_h^\theta \Big) \cdot (D_{\Ga_h^\theta} e_x^\theta)  \nb_{\Ga_h^\theta}  z_h^\theta \Big| \d\theta  \\
	&\leq c \int_0^1 \! \|\nabla_{\Gamma_h^\theta} w_h^\theta \|_{L^2(\Gamma_h^\theta)}
	\|\nabla_{\Gamma_h^\theta} v_h^{*,\theta} \|_{L^\infty(\Gamma_h^\theta)}
	\|D_{\Gamma_h^\theta} e_x^\theta \|_{L^2(\Gamma_h^\theta)}
	\|\nabla_{\Gamma_h^\theta} z_h^\theta \|_{L^\infty(\Gamma_h^\theta)}  \d\theta \\
	& + c \int_0^1 \!\! \|\nabla_{\Gamma_h^\theta} w_h^\theta \|_{L^2(\Gamma_h^\theta)}
	\|\nabla_{\Gamma_h^\theta} e_v^\theta \|_{L^2(\Gamma_h^\theta)}
	\|D_{\Gamma_h^\theta} e_x^\theta \|_{L^\infty(\Gamma_h^\theta)}
	\|\nabla_{\Gamma_h^\theta} z_h^\theta \|_{L^\infty(\Gamma_h^\theta)} \d\theta \\
	&\leq c \|\nabla_{\Gamma_h[\xs]} w_h \|_{L^2(\Gamma_h[\xs])}
	\|D_{\Gamma_h[\xs]} e_h \|_{L^2(\Gamma_h[\xs])}
	\|\nabla_{\Gamma_h[\xs]} z_h \|_{L^\infty(\Gamma_h[\xs])} \\
	&\ + c \|\nabla_{\Gamma_h[\xs]} w_h \|_{L^2(\Gamma_h[\xs])}
	\|\nabla_{\Gamma_h[\xs]} e_v \|_{L^2(\Gamma_h[\xs])}
	\|\nabla_{\Gamma_h[\xs]} z_h \|_{L^\infty(\Gamma_h[\xs])} \\
	&\leq c \normA{\bfw} \big( \normA{\ex} + \normA{\ev} \big)
	\|z_h\|_{W^{1,\infty}(\Gamma_h[\xs])} ,
\end{align*}
where $z_h=z_h^0$ and where we have used the decomposition \eqref{vhtheta}, Lemma~\ref{lemma:theta-independence}, and the bounds \eqref{eq:assumed bounds - 2}. Similarly, we obtain 
\begin{align*} 
	|J_2^\theta| + |J_3^\theta| + |J_4^\theta| &\leq c \normA{\bfw} \big( \normA{\ex} + \normA{\ev} \big)
	\|z_h\|_{W^{1,\infty}(\Gamma_h[\xs])} , 
\end{align*}
where for the estimate for $J_2^\theta$ we used the equality analogous to \eqref{eq:mat-grad formula}:
\begin{equation}
\label{eq:mat-D formula}
	\begin{aligned}
		\mat_{\Gamma_h^\theta} (D_{\Ga_h^\theta} e_x^\theta) &= \mat_{\Gamma_h^\theta} \Big( \textnormal{tr}(\nb_{\Ga_h^\theta} e_x^\theta) - \big( \nb_{\Ga_h^\theta} e_x^\theta+(\nb_{\Ga_h^\theta} e_x^\theta)^T \big) \Big) \\
		=&\ D_{\Ga_h^\theta} (\mat_{\Gamma_h^\theta} e_x^\theta) 
		+ \textnormal{tr}(\bar E^\theta) - (\bar E^\theta+(\bar E^\theta)^T) ,
	\end{aligned}
\end{equation}
with $\bar E^\theta= - \bigl(\nabla_{\Gamma_h^\theta} v_{\Ga_h^\theta} -	\nu_h^\theta(\nu_h^\theta)^T(\nabla_{\Gamma_h^\theta } v_{\Ga_h^\theta} )^T \bigr) \nabla_{\Gamma_h^\theta} \bbk e_x^\theta\ebk$, as follows from \cite[Lemma~2.6]{DziukKronerMuller} and the definition of the first order linear differential operator $D_{\Gamma_h^\theta}$. 

\noindent Further using $\mat_{\Gamma_h^\theta} e_x^\theta = e_v^\theta$ (since for the nodal vectors we have $\dotex=\ev$), we altogether obtain the bound
\begin{align*}
	&\ - \doteu^T \big( \bfA(\bfx)-\bfA(\xs) \big) \us \\
	&\ \leq  -\frac{\d}{\d t} \Big(\eu^T \big( \bfA(\bfx)-\bfA(\xs) \big) \us\Big) + c \normA{\eu} \big( \normK{\ev} + \normK{\ex} \big)
	 .
\end{align*}
%
%
(v) 
The term containing the nonlinearity $\bff$ can be written as
\begin{align*}
	\doteu^T \big(\bff(\bfx,\bfu) - \bff(\xs,\us)\big) &= \int_{\Ga_h^1} \!  f(u_h,\nabla_{\Ga_h^1} u_h) \cdot \dot e_u^1- \int_{\Ga_h^0} \!  f(u_h^\ast,\nabla_{\Ga_h^0} u_h^\ast) \cdot \dot e_u^0 ,
\end{align*}
where the function $f$ is given by the right-hand side  of \eqref{weak form} for $u=(\n,H)$; $f:\R^4 \times \R^{3\times 4} \to \R^4$ is smooth and therefore locally Lipschitz continuous.

We estimate this term in the following way. Using the abbreviation
\bcl
\begin{equation}
\label{uhtheta}
	u_{\Ga_h^\theta} :=  u_h^{*,\theta}+\theta e_u^\theta = \sum_{j=1}^N (u_j^*+\theta (\eu)_j) \, \phi_j[\xs+\theta \ex]  ,
\end{equation}
\ecl
we have
\begin{align*}
	\doteu^T \big(\bff(\bfx,\bfu) - \bff(\xs,\us)\big) &=
	\int_0^1 \frac\d{\d\theta} \int_{\Gamma_h^\theta} f(u_{\Ga_h^\theta}, \nabla_{\Gamma_h^\theta}u_{\Ga_h^\theta}) \cdot \dot e_u^\theta\, \d\theta.
\end{align*}
Here we apply the Leibniz formula, noting that $e_x^\theta$ is the velocity of the surface $\Gamma_h^\theta$ considered as a function of $\theta$ and that we have the vanishing material derivative
$$
\mat_\theta \dot e_u^\theta = \sum_{j=1}^N \frac{\d}{\d\theta}(\doteu)_j \, \phi_j[\xs+\theta \ex] =0.
$$
So we obtain
\begin{align*}
	& \doteu^T \big(\bff(\bfx,\bfu) - \bff(\xs,\us)\big) \\
	& = \
	\int_0^1\int_{\Gamma_h^\theta} \Bigl( \pa_\theta^\bullet \big( f(u_{\Ga_h^\theta}, \nabla_{\Gamma_h^\theta}u_{\Ga_h^\theta}) \big) \cdot \dot e_u^\theta + f(u_{\Ga_h^\theta}, \nabla_{\Gamma_h^\theta}u_{\Ga_h^\theta}) \cdot \dot e_u^\theta \, (\nabla_{\Gamma_h^\theta}\cdot e_x^\theta) \Bigr)\d\theta.
\end{align*}
Here we proceed further using the chain rule
\begin{align*}
	\partial_\theta^\bullet f(u_{\Ga_h^\theta}, \nabla_{\Gamma_h^\theta}u_{\Ga_h^\theta}) 
	&= \partial_1 f(u_{\Ga_h^\theta}, \nabla_{\Gamma_h^\theta}u_{\Ga_h^\theta}) \,  \partial_\theta^\bullet u_{\Ga_h^\theta} \\
	&\ + \partial_2 f(u_{\Ga_h^\theta}, \nabla_{\Gamma_h^\theta}u_{\Ga_h^\theta}) \,  \partial_\theta^\bullet (\nabla_{\Gamma_h^\theta}u_{\Ga_h^\theta})
\end{align*}
and observe the following:  by the $W^{1,\infty}$ bound for the exact solution $u^*$, and for $e_u$ in \eqref{eq:assumed bounds - 2} (and hence for $e_u^\theta$ by 
Lemma~\ref{lemma:theta-independence}), 
we have on recalling \eqref{uhtheta} that $u_{\Ga_h^\theta}$ and its gradient take values in a bounded set. Since $f$ is smooth, we therefore have
$$
	\|  \partial_i f(u_{\Ga_h^\theta}, \nabla_{\Gamma_h^\theta}u_{\Ga_h^\theta}) \|_{L^\infty(\Gamma_h^\theta)} \le C, \qquad i=1,2.
	$$
From $u_{\Ga_h^\theta}=u_h^* + \theta e_u^\theta$ and the identity $\mat_\theta e_u^\theta = 0$ (obtained by the same argument as for $\dot e_u^\theta$ above) we note
$$
\partial_\theta^\bullet u_{\Ga_h^\theta} = e_u^\theta.
$$
We further use the relation, see \cite[Lemma~2.6]{DziukKronerMuller},
\begin{equation}
\label{eq:mat grad interchange}
	\partial_\theta^\bullet \big( \nabla_{\Gamma_h^\theta}u_{\Ga_h^\theta} \big) =
	\nabla_{\Gamma_h^\theta} \big( \partial_\theta^\bullet u_{\Ga_h^\theta} \big) 
	- \Big( \nabla_{\Gamma_h^\theta} e_x^\theta - \nu_h^\theta (\nu_h^\theta)^T (\nabla_{\Gamma_h^\theta} e_x^\theta)^T \Big) \nabla_{\Gamma_h^\theta}u_{\Ga_h^\theta}.
\end{equation}
We then have, on inserting \eqref{uhtheta} and using once again
Lemma~\ref{lemma:theta-independence} and the bounds in \eqref{eq:assumed bounds - 2},
\begin{align*}
	& \doteu^T \big(\bff(\bfx,\bfu) - \bff(\xs,\us)\big)
	\\
	&= \int_0^1\int_{\Gamma_h^\theta} \dot e_u^\theta \Bigl( \bbk f(u_{\Ga_h^\theta}, \nabla_{\Gamma_h^\theta}u_{\Ga_h^\theta}) \cdot  (\nabla_{\Gamma_h^\theta}\cdot e_x^\theta) \ebk + \partial_1 f(u_{\Ga_h^\theta}, \nabla_{\Gamma_h^\theta}u_{\Ga_h^\theta}) e_u^\theta 
	\\
	 &\ +  \partial_2 f(u_{\Ga_h^\theta}, \nabla_{\Gamma_h^\theta}u_{\Ga_h^\theta})
	\bigl( \nabla_{\Gamma_h^\theta} e_u^\theta  
	- \big( \nabla_{\Gamma_h^\theta} e_x^\theta - \nu_h^\theta (\nu_h^\theta)^T (\nabla_{\Gamma_h^\theta} e_x^\theta)^T \big) \nabla_{\Gamma_h^\theta}u_{\Ga_h^\theta} \bigr)\Bigr) \d\theta
	\\
	\leq  &\ c \| \dot e_u \|_{L^2(\Gamma_h[\xs])} \Bigl( \| e_u \|_{L^2(\Gamma_h[\xs])} 
	\\
	 &\ + \| \nabla_{\Gamma_h[\xs])} e_u \|_{L^2(\Gamma_h[\xs])} + 
	\| \nabla_{\Gamma_h[\xs])} e_x \|_{L^2(\Gamma_h[\xs])}\,
	\| \nabla_{\Gamma_h[\xs])} u_h^* \|_{L^\infty(\Gamma_h[\xs])} \Bigr)
	\\
	\leq  &\ c  \normM{\doteu} \Bigl( \normK{\eu} + \normA{\ex} \Bigr).
\end{align*}

(vi) The defect term is bounded using the Cauchy--Schwarz inequality:
$$
	-\doteu^T\bfM(\xs)\du \le \normM{\doteu} \normM{\du}.
$$

\noindent The combination of the above inequalities \blueon and the norm equivalence \eqref{norm-equiv} \blueoff yield the bound
\begin{align} \label{err-est-eu-1}
	\begin{aligned}
		&\normMnum{\doteu}^2 + \half \diff \normAnum{\eu}^2 \\
		& \le 
		c \normA{\eu}^2
		+c  \normM{\doteu}  \normK{\ex}
		\\
		&\quad + c \normA{\eu} \big( \normK{\ev} + \normK{\ex} \big)   \\
		&\quad - \diff \Big(\eu^T \big( \bfA(\bfx)-\bfA(\xs) \big) \us\Big)
		 \\
		&\quad 
		+c \normM{\doteu} \big( \normK{\eu} + \normK{\ex} \big) \\
		&\quad +
		\normM{\doteu} \normM{\du}.	\end{aligned}
\end{align}
Here we note for the first term on the left-hand side that, by \eqref{eq:matrix derivatives},
\begin{align*}
	\half \diff \normMnum{\eu}^2 &= \eu^T \bfM(\bfx) \doteu \! + \! \half \eu^T \diff \bfM(\bfx) \eu 
	\leq  \half \normMnum{\doteu}^2 + c \normMnum{\eu}^2.
\end{align*}Estimating further, using the norm equivalence \eqref{norm-equiv} and Young's inequality and absorptions (with $h\leq h_0$ for a sufficiently small $h_0$) into $\normM{\doteu}^2$, we obtain the following estimate:
\begin{align} 
	\begin{aligned}
		&\half \diff \normMnum{\eu}^2 + \half \diff \normAnum{\eu}^2 \\
		& \leq c \normK{\ex}^2 + c \normK{\ev}^2 + c \normK{\eu}^2 \\
		&\quad - \diff \Big(\eu^T \big( \bfA(\bfx)-\bfA(\xs) \big) \us\Big)
	\\
		& \quad 
		+ c\normM{\du}^2 .
	\end{aligned}
\end{align}

After integration in time, using the above estimates together with Young's inequality, assumption \eqref{eq:assumed bounds - 2} and the norm equivalence  \eqref{norm-equiv}, and recalling that $\bfM(\xs)+\bfA(\xs)=\bfK(\xs)$, we obtain
\begin{align} \label{err-est-eu-2}
	\begin{aligned}
		&\half \normKt{\eu(t)}^2  
	        \leq \half \normKo{\eu(0)}^2  \\
		& \quad + c \int_0^t \big( \normKs{\ev(s)}^2 + \normKs{\eu(s)}^2 + \normKs{\ex(s)}^2 \big) \d s \\ 
		&\quad -\eu(t)^T \big( \bfA(\bfx(t))-\bfA(\xs(t)) \big) \us(t) 
	\\
		&\quad 
		+ c \int_0^t \normMs{\du(s)}^2 \d s \\
		%
		%
		& \leq  \half \normKo{\eu(0)}^2\\
		&\quad + c \int_0^t \big( \normKs{\ex(s)}^2 + \normKs{\ev(s)}^2+ \normKs{\eu(s)}^2 \big) \d s \\ 
		&\quad + \frac{1}{6}  \normAt{\eu(t)}^2 + c \normKt{\ex(t)}^2 \\ 
		&\quad  + c \int_0^t \normMs{\du(s)}^2 \d s ,
	\end{aligned}
\end{align}
\bbk where the term involving the stiffness matrices was estimated analogously as (iii), and with the help of the Young inequality. \ebk 

Absorption of the term containing $\eu(t)$  (in the case of a sufficiently small $h$) finally yields
\begin{align} \label{eq:final estimate PDE}
	\begin{aligned}
		  \normKt{\eu(t)}^2 & \le  
		c \int_0^t \big( \normKs{\ex(s)}^2 + \normKs{\ev(s)}^2 + \normKs{\eu(s)}^2 \big) \d s \\
		& \hspace{-2mm}  + c \,\normKt{\ex(t)}^2 + c\,\normKo{\eu(0)}^2+  c \int_0^t \normMs{\du(s)}^2 \d s  .
	\end{aligned}
\end{align}

(B) {\it Estimates for the velocity equation:} 
Testing \eqref{eq:error eq tested - v} with $\ev$  and using Lemma~\ref{lemma:matrix differences},
the Cauchy--Schwarz and Young inequalities, and the norm equivalence \eqref{norm-equiv} yields the bound 
\begin{align*}
	\tfrac12\normK{\ev}^2 \leq c \normK{\ex}^2 + \ev^T \big(\bfg(\bfx,\bfu) - \bfg(\xs,\us)\big) + c\|\dv\|_{\star,\xs}^2 .
\end{align*}
The term with the nonlinearity $\bfg$ is first rewritten as
\begin{align*}
	&\hspace{-5mm} \ev^T \big(\bfg(\bfx,\bfu) - \bfg(\xs,\us)\big) \\
	&= \int_{\Ga_h^1} g(u_h) e_v^1 - \int_{\Ga_h^0} g(u_h^*) e_v^0 \\
	&\ + \int_{\Ga_h^1} \fg(u_h,\nb_{\Ga_h^1} u_h) \cdot \nb_{\Ga_h^1} e_v^1 - \int_{\Ga_h^0} \fg(u_h^*,\nb_{\Ga_h^0} u_h^*) \cdot  \nb_{\Ga_h^0} e_v^0 ,
\end{align*}
where the functions $g:\R^4\to \R^3$ and $\fg:\R^4\times \R^{3\times 4}\to \R^{3\times 3}$ 
are smooth and hence locally Lipschitz-continuous functions as given by \eqref{weak form}.

We now use the same argument as in (v) for the nonlinearity in the equation for the dynamic variables $u$. Using formula \eqref{uhtheta} and the Leibniz formula, recalling that $e_x^\theta$ is the velocity of $\Gamma_h^\theta$ \bbk as a function of $\theta$, \ebk and also using that $\pa_\theta^\bullet e_v^\theta=0$, we again obtain
\begin{align*}
	&\ \ev^T \big(\bfg(\bfx,\bfu) - \bfg(\xs,\us)\big) \\
	=&\ \int_0^1 \int_{\Gamma_h^\theta} \pa_\theta^\bullet \big( g(u_{\Ga_h^\theta}) \big) \cdot e_v^\theta \d\theta 
	+ \int_0^1 \int_{\Gamma_h^\theta} \pa_\theta^\bullet \big( \fg(u_{\Ga_h^\theta}, \nabla_{\Gamma_h^\theta}u_{\Ga_h^\theta}) \big) \cdot \nabla_{\Gamma_h^\theta} e_v^\theta \d\theta \\
	&\ + \int_0^1 \int_{\Gamma_h^\theta} \fg(u_{\Ga_h^\theta}, \nabla_{\Gamma_h^\theta}u_{\Ga_h^\theta}) \cdot \pa_\theta^\bullet \big( \nabla_{\Gamma_h^\theta} e_v^\theta \big) \d\theta \\
	&\ + \int_0^1 \int_{\Gamma_h^\theta} \Big(  g(u_{\Ga_h^\theta}) \cdot e_v^\theta + \fg(u_{\Ga_h^\theta}, \nabla_{\Gamma_h^\theta}u_{\Ga_h^\theta}) \cdot  \nabla_{\Gamma_h^\theta} e_v^\theta \Big) (\nb_{\Gamma_h^\theta} \cdot e_x^\theta) \d\theta. \\
\end{align*}
Here we use the chain rule
\begin{align*}
	\bbk \partial_\theta^\bullet g(u_{\Ga_h^\theta}) = \ebk &\
	\bbk \partial_1 g(u_{\Ga_h^\theta}) \, \partial_\theta^\bullet u_{\Ga_h^\theta} \ebk , \\
	\partial_\theta^\bullet \fg(u_{\Ga_h^\theta}, \nabla_{\Gamma_h^\theta}u_{\Ga_h^\theta}) &=
	\partial_1 \fg(u_{\Ga_h^\theta}, \nabla_{\Gamma_h^\theta}u_{\Ga_h^\theta}) \,  \partial_\theta^\bullet u_{\Ga_h^\theta}
	\\ & +
	\partial_2 \fg(u_{\Ga_h^\theta}, \nabla_{\Gamma_h^\theta}u_{\Ga_h^\theta}) \,  \partial_\theta^\bullet (\nabla_{\Gamma_h^\theta}u_{\Ga_h^\theta}) .
\end{align*}
By the $W^{1,\infty}$ bound for $u_h^*$ and $e_u$ (by~\eqref{eq:assumed bounds - 2}), and hence for $e_u^\theta$ by Lemma~\ref{lemma:theta-independence}, the arguments of $g$ and $\fg$ take values in a bounded set. By the smoothness of both functions, we therefore have 
$$
	\|\partial_1 g(u_{\Ga_h^\theta}) \|_{L^\infty(\Gamma_h^\theta)} \le C  \andquad \|  \partial_i \fg(u_{\Ga_h^\theta}\nabla_{\Gamma_h^\theta}u_{\Ga_h^\theta}) \|_{L^\infty(\Gamma_h^\theta)} \le C, \qquad i=1,2.
$$
Using $\partial_\theta^\bullet u_{\Ga_h^\theta} = e_u^\theta$ and $\pa_\theta^\bullet e_v^\theta=0$, the relations \eqref{eq:mat grad interchange}, \eqref{eq:mat-D formula}, \eqref{uhtheta}, and using once again Lemma~\ref{lemma:theta-independence} and the bounds \eqref{eq:assumed bounds - 2}, we obtain
\begin{align*}
	& \ev^T \big(\bfg(\bfx,\bfu) - \bfg(\xs,\us)\big) \\
	&\ = \int_0^1 \int_{\Gamma_h^\theta} \partial_1 g(u_{\Ga_h^\theta}) \ e_u^\theta \cdot e_v^\theta \ \d\theta + \int_0^1 \int_{\Gamma_h^\theta} \Big( \partial_1 \fg(u_{\Ga_h^\theta}, \nabla_{\Gamma_h^\theta}u_{\Ga_h^\theta}) \, e_u^\theta \\
	&\ + \partial_2 \fg(u_{\Ga_h^\theta}, \nabla_{\Gamma_h^\theta}u_{\Ga_h^\theta}) \Big( \nabla_{\Gamma_h^\theta} e_u^\theta \! - \! \big( \nabla_{\Gamma_h^\theta} e_x^\theta \! - \! \nu_h^\theta (\nu_h^\theta)^T (\nabla_{\Gamma_h^\theta} e_x^\theta)^T \big) \nabla_{\Gamma_h^\theta}u_{\Ga_h^\theta} \Big) \! \cdot \! \nabla_{\Gamma_h^\theta} e_v^\theta \d\theta \\
	&\quad + \int_0^1 \int_{\Gamma_h^\theta} \fg(u_{\Ga_h^\theta}, \nabla_{\Gamma_h^\theta}u_{\Ga_h^\theta}) \cdot \Big( 
	- \big( \nabla_{\Gamma_h^\theta} e_x^\theta - \nu_h^\theta (\nu_h^\theta)^T (\nabla_{\Gamma_h^\theta} e_x^\theta)^T \big) \nabla_{\Gamma_h^\theta} e_v^\theta \Big) \d\theta \\
	&\quad + \int_0^1 \int_{\Gamma_h^\theta} \Big(  g(u_{\Ga_h^\theta}) \cdot e_v^\theta + \fg(u_{\Ga_h^\theta}, \nabla_{\Gamma_h^\theta}u_{\Ga_h^\theta}) \cdot  \nabla_{\Gamma_h^\theta} e_v^\theta \Big) (\nb_{\Gamma_h^\theta} \cdot e_x^\theta) \d\theta 
\end{align*}
so that
\begin{align*}
	& \ev^T \big(\bfg(\bfx,\bfu) - \bfg(\xs,\us)\big) \\
	&\ \leq c \|e_u\|_{L^2(\Gamma_h[\xs])} \ \|e_v\|_{L^2(\Gamma_h[\xs])} \\
	&\quad + c  \|\nb_{\Gamma_h[\xs]} e_v\|_{L^2(\Gamma_h[\xs])} \ 
	\Big( \|e_u\|_{L^2(\Gamma_h[\xs])} + \|\nb_{\Gamma_h[\xs]} e_u\|_{L^2(\Gamma_h[\xs])} \\
	&\quad + \|\nb_{\Gamma_h[\xs]} e_x\|_{L^2(\Gamma_h[\xs])} \  \|\nb_{\Gamma_h[\xs]}u_h^*\|_{L^\infty(\Gamma_h[\xs])} + \|\nb_{\Gamma_h[\xs]} e_x\|_{L^2(\Gamma_h[\xs])} \Big) \\
	&\quad + c \|e_v\|_{H^1(\Gamma_h[\xs])} \ \|\nb_{\Gamma_h[\xs]} e_x\|_{L^2(\Gamma_h[\xs])}\\
	&\ \leq c \normK{\ev} \big( \normK{\eu} + \normK{\ex} \big).
\end{align*}

Altogether, after an absorption, we then have
\begin{equation}
\label{eq:final estimate surface}
	\normK{\ev}^2 \leq c \normK{\ex}^2 + c \normK{\eu}^2 + c \|\dv\|_{\star,\xs}^2 .
\end{equation}

(C) {\it Combination:} 
We use the equation $\dotex = \ev$ \bbk and the bound~\eqref{eq:matrix derivatives} \ebk to show the bound
\begin{align*}
	\normK{\ex(t)}^2 &= \int_0^t \frac{\d}{\d s} \normK{\ex(s)}^2 \d s \\
	 \bbk=\ebk &\ \bbk \int_0^t  \Bigl(2\ex(s)^T \bfK(\xs) \dotex(s) + \ex(s)^T \frac{\d}{\d s}\big(\bfK(\xs)\big) \ex(s) \Bigr)\, \d s \\ \ebk
	&\leq c \int_0^t \normK{\ev(s)}^2 \d s + c \int_0^t \normK{\ex(s)}^2 \d s  ,
\end{align*}
which is substituted into the estimates \eqref{eq:final estimate surface} and \eqref{eq:final estimate PDE}, the latter again plugged into \eqref{eq:final estimate surface}. Then we take the linear combination of the three inequalities, and obtain
\begin{equation*} 
\begin{aligned}
	& \normKt{\ex(t)}^2 + \normKt{\ev(t)}^2 + \normKt{\eu(t)}^2 \\
	& \leq c \int_0^t \big( \normKs{\ex(s)}^2 + \normKs{\ev(s)}^2 + \normKs{\eu(s)}^2 \big) \d s \\
	&\quad + c\,\normKo{\eu(0)}^2 + c \|\dv(t)\|_{\star,\xs(t)}^2 + c \int_0^t \normMs{\du(s)}^2 \d s.
	\end{aligned}
\end{equation*}
Finally, by Gronwall's inequality we obtain the stability bound \eqref{eq:stability bound} for $t \in [0,t^*]$.

Now it only remains to show that $t^* = T$ for $h$ sufficiently small. 
To this end we use the assumed defect bounds to obtain the error estimates of order~$\kappa$:
\begin{align} 
\begin{aligned}
	\normK{\ev(t)} + \normK{\eu(t)} + \normK{\ex(t)}  
	\le C h^{\kappa}.
\end{aligned}
\end{align}
Then, by the inverse inequality, we have for $t\in[0,t^*]$ 
\begin{equation*} 
	\begin{aligned}
		&\|e_v(\cdot,t)\|_{W^{1,\infty}(\Gamma_h[\xs\t])}
		+\|e_u(\cdot,t)\|_{W^{1,\infty}(\Gamma_h[\xs\t])}
		+\|e_x(\cdot,t)\|_{W^{1,\infty}(\Gamma_h[\xs\t])} \\
		&\le ch^{-1}
		\big(\normK{\ev\t} +  \normK{\eu\t} +  \normK{\ex\t} \big) \\ 
		&\le 
		c C h^{\kappa-1} \leq \tfrac12 h^{(\kappa-1)/2} ,
	\end{aligned}
\end{equation*}
for sufficiently small $h$.
Hence we can extend the bounds \eqref{eq:assumed bounds - 2} beyond $t^*$, which contradicts the maximality of $t^*$ unless $t^*=T$. Therefore we have the stability bound \eqref{eq:stability bound} for $t\in[0,T]$. 
\qed
\end{proof}

\bcl

\begin{remark} \label{rem:Dziuk} 

One might be tempted, as we originally were, to use the above arguments also for proving the stability of Dziuk's method. Unfortunately, this does not work out. Let us explain what difficulties arise. 

The error equation for Dziuk's method \eqref{eq:matrix--vector form Dziuk} looks like \eqref{eq:error eq - u}--\eqref{eq:error eq - x}, with $\ex$ in place of $\eu$ and $\mathbf{f}\equiv 0$. This then leads to an estimate like \eqref{err-est-eu-1}, again with $\ex$ in place of $\eu$ and with the last but one line dropped, 
i.e., 
\begin{align*} 
	\begin{aligned}
		&\normMnum{\dotex}^2 + \half \diff \normAnum{\ex}^2 \\
		& \le 
		c \normA{\ex}^2
		+c  \normM{\dotex}  \normK{\ex}
		+c h^{(\kappa-1)/2}\normM{\dotex}^2 \\
		&\quad + c \normA{\ex} \big( \normK{\ev} + \normK{\ex} \big)   \\
		&\quad - \diff \Big(\ex^T \big( \bfA(\bfx)-\bfA(\xs) \big) \xs\Big)
		\\
		&\quad +
		\normM{\dotex} \normM{\du}.	\end{aligned}
\end{align*}
The term $\normK{\ev}$ in the inequality above cannot be bounded by the left-hand side, which contains only $\normM{\dotex}^2$ instead of $\normK{\dotex}^2$ (recall that $\ev = \dotex$ by~\eqref{eq:error eq - x}). 
By considering the enlarged system, we manage to bound the term $\normK{\ev}$ in \eqref{err-est-eu-1} by using \eqref{eq:final estimate surface} derived from testing \eqref{eq:error eq - v} with~$\ev$. But \eqref{eq:error eq - v} does not exist in the error equations for Dziuk's method \eqref{eq:matrix--vector form Dziuk}. 


On the other hand, the above arguments work well for the dynamic velocity law $\partial^\bullet v = \Delta_{\Gamma[X]} v$ instead of the velocity law $v = \Delta_{\Gamma[X]} x_{\Gamma[X]}$ of mean curvature flow; see \cite{KLLP2017} and \cite{KL2018} where the evolving finite element semi-discretization and full discretization, respectively, of such a dynamic velocity law were studied. 
The dynamic velocity law $\partial^\bullet v = \Delta_{\Gamma[X]} v$ would yield an error equation similar to \eqref{eq:error eq - v}, which can help to bound the term $\normK{\ev}$. 
However, taking the time derivative in the velocity law $v = \Delta_{\Gamma[X]} x_{\Gamma[X]}$ of mean curvature flow (or in its weak formulation \eqref{weak form Dziuk} or in its finite element discretization \eqref{semidiscrete weak form Dziuk}) does not lead to a dynamic velocity law with a coercive operator on the right-hand side, but instead the differentiation of \eqref{semidiscrete weak form Dziuk} yields
\begin{align*}
\frac{\d}{\d t}  \int_{\Ga_h}  v_h \cdot \varphi_h 
& +  \int_{\Ga_h} \Bigl(\nabla_{\Ga_h} v_h \cdot \nabla_{\Ga_h}  \varphi_h + 
  \nabla_{\Ga_h} x_h \cdot (D_{\Ga_h} v_h) \nabla_{\Ga_h} \varphi_h \Bigr) =0
\end{align*}
for all $\varphi_h \in S_h[\bfx]^3$, where the operator $D_{\Ga_h}$ is defined in Lemma~\ref{lemma:matrix differences} and where we note that $ \nabla_{\Ga_h} x_h = I- \nu_{\Ga_h}\nu_{\Ga_h}^T$. The right-hand integral is not given by a $H^1$-elliptic bilinear form, in contrast to its first term: 
a calculation shows that for $ \varphi_h=v_h$ the last term of the integrand is everywhere non-positive, and
the complete integrand in the second integral becomes negative in points where $\nabla_{\Ga_h} v_h$  has the normal vector $\nu_{\Ga_h}$ in its null-space and has two non-zero eigenvalues of different sign (one eigenvalue is always zero, since the columns of  $\nabla_{\Ga_h} v_h$ are orthogonal to $\nu_{\Ga_h}$). 

So we must concede that our techniques do not yield a stability proof for Dziuk's method. It remains an open problem whether
a stability result similar to Proposition~\ref{proposition:stability - coupled problem} exists at all for Dziuk's method.
\end{remark}
\ecl

\section{Defect bounds for the semi-discretization}
\label{section:Defect}

We now turn to estimating the consistency errors defined by \eqref{defect vectors}.
\begin{lemma}
\label{lemma: semidiscrete residual}
    Let the  surface $X$ evolving under mean curvature flow be sufficiently regular on the time interval $[0,T]$. Then, there exist $h_0>0$ and $c>0$ such that for all $h\leq h_0$ and for all $t\in[0,T]$, the  defects $d_v(t)\in S_h(\Gamma_h[\bfx^*])^3$ and $d_u(t)\in S_h(\Gamma_h[\bfx^*])^4$ of the $k$th-degree finite elements,  as defined by their nodal vectors $\dv(t)$ and $\du(t)$ in \eqref{defect vectors},
    are bounded as
    \begin{align*}
        \|\dv(t)\|_{\star,\xs(t)}=&\ \|d_v\|_{H_h\inv(\Ga_h[\xs(t)])} \leq c h^k , \\
        \|\du(t)\|_{\bfM(\xs(t))}=&\ \|d_u\|_{L^2(\Ga_h[\xs(t)])} \leq c h^k .
    \end{align*}
 The constant $c$ is independent of $h$ and $t\in[0,T]$.
\end{lemma}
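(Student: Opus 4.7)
My plan is to rewrite the nodal defect equations \eqref{defect vectors} in variational form on the interpolated surface $\Gamma_h^*(t)=\Gamma_h[\xs(t)]$, lift the test functions to the exact surface $\Gamma(t)$, and exploit the exact weak equations \eqref{weak form} together with the Ritz-map identity \eqref{uhs-ritz} so that the principal bilinear terms cancel and only controllable error contributions remain. Concretely, for any $\varphi_h\in S_h[\xs]$, the defect $d_u$ can be written (omitting $t$) as a sum of three differences: a material-derivative difference $\int_{\Gamma_h^*}\partial_h^\bullet u_h^*\cdot\varphi_h-\int_\Gamma \partial^\bullet u\cdot\varphi_h^l$, a lower-order mass-matrix remainder $\int_\Gamma u\cdot\varphi_h^l-\int_{\Gamma_h^*}u_h^*\cdot\varphi_h$ (which appears when applying the Ritz identity), and a nonlinearity difference $\int_\Gamma |A|^2 u\cdot\varphi_h^l-\int_{\Gamma_h^*}|\nabla_{\Gamma_h^*}\n_h^*|^2 u_h^*\cdot\varphi_h$. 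An analogous splitting applies to $d_v$, comparing $v_h^*$ and $(H_h^*\n_h^*)$ with their exact counterparts $v$ and $H\n$.

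Each piece will be estimated using three ingredients that are already available in the literature: (i) the geometric transfer estimates between $\Gamma_h^*$ and $\Gamma$ for bilinear forms and nonlinear integrals, of order $h^{k+1}$, from \cite[Section~5]{Kovacs2017} (themselves refinements of Dziuk--Elliott \cite{DziukElliott_ESFEM,DziukElliott_L2} and Demlow \cite{Demlow2009}); (ii) the $k$th-order Lagrange interpolation bound $\|v-I_h v\|_{H^1}=O(h^k)$ used for $v_h^*$; and (iii) the Ritz-map bounds for $u_h^*$ and its material derivative from \cite[Theorem~6.3]{Kovacs2017}, which give $\|u-(u_h^*)^l\|_{H^1}=O(h^k)$, $\|u-(u_h^*)^l\|_{L^2}=O(h^{k+1})$, and analogous bounds for $\partial^\bullet u-(\partial_h^\bullet u_h^*)^l$. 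Together these yield $\int_{\Gamma_h^*} d_u\cdot\varphi_h \le ch^k\|\varphi_h\|_{L^2(\Gamma_h^*)}$ and $\int_{\Gamma_h^*} d_v\cdot\varphi_h \le ch^k\|\varphi_h\|_{H^1(\Gamma_h^*)}$, which by the definitions of $\|\cdot\|_{L^2}$ and $\|\cdot\|_{H_h^{-1}}$ (the latter identified with $\|\cdot\|_{\star,\xs}$ by \cite[Eq.~(5.5)]{KLLP2017}) give exactly the claimed bounds.

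The routine ingredients above handle all linear terms. The one nontrivial step, which I would treat with extra care, is the nonlinear product terms $H\n$, $|\nabla_{\Gamma}\n|^2\n$ and $|\nabla_{\Gamma}\n|^2 H$ that appear on the right-hand sides of \eqref{weak form}. Here I would use the identity $ab-a_h^* b_h^*=(a-a_h^*)b+a_h^*(b-b_h^*)$, the uniform $W^{k+1,\infty}$ bounds on the exact solution assumed in the regularity hypotheses, the $L^\infty$ Ritz bound $\|(u_h^*)^l\|_{W^{1,\infty}(\Gamma)}\le C$ obtained from the Ritz error and the smoothness of $u$, and the $L^2$ Ritz bound for the differences. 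The quadratic term $|\nabla_{\Gamma_h^*}\n_h^*|^2-|\nabla_\Gamma\n|^2$ is handled by factoring it as $(\nabla_{\Gamma_h^*}\n_h^*-\nabla_\Gamma \n):(\nabla_{\Gamma_h^*}\n_h^*+\nabla_\Gamma\n)$ and using the $H^1$ Ritz error on the first factor and the uniform bound on the second. This local-Lipschitz argument works precisely because the exact solution is assumed sufficiently smooth and the Ritz map preserves $W^{1,\infty}$-boundedness on the approximating surface, which is the main technical obstacle and the reason all constants depend on higher Sobolev norms of the exact $(X,v,\n,H)$.
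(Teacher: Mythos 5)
Your proposal matches the paper's proof: the key step is exactly the Ritz-map identity \eqref{uhs-ritz} cancelling the stiffness term and leaving the lower-order mass remainder (so that $d_u$ can be estimated in $L^2$ rather than only in $H^{-1}$), followed by the geometric transfer estimates and the Ritz/interpolation bounds of \cite{Kovacs2017,Demlow2009} to control the remaining brackets, with the nonlinear products handled via local Lipschitz continuity and $W^{1,\infty}$ bounds on the exact solution. Your decomposition into material-derivative, mass-remainder and nonlinearity differences is precisely the one the paper writes down.
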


\begin{proof} The proof is very similar to the proof of the defect bounds in \cite{KLLP2017}. The main difference is that we insert the Ritz projection $u_h^*$ instead of the nodal interpolation into the scheme. This has the benefit that a critical term in estimating the $L^2$ norm cancels, as we will see below. On the other hand, the approximation quality of the Ritz projection is the same as that of interpolation: as is shown in \bbk \cite[Theorem~6.3]{Kovacs2017}, \ebk we have
\begin{equation}\label{ritz-bound}
\begin{aligned}
&\| (u_h^*)^l (t) - u(t) \|_{H^1(\Gamma[X(\cdot,t)])} \| \le C h^k, \\
&\| (\partial^\bullet_h u_h^*)^l (t) - \partial^\bullet u(t) \|_{H^1(\Gamma[X(\cdot,t)])} \| \le C h^k.
\end{aligned}
\end{equation}
We begin by rewriting the equations \eqref{defect vectors} defining the defects in weak form. With the functions of $u=(\n,H)$ defined by $g(u)=-H\n$ and $f(u,\nabla_\Gamma u)=|A|^2 u$ for $A=\nabla_\Gamma \nu$ we have
\begin{align*}
&\int_{\Gamma_h[\xs]} \nabla_{\Gamma_h[\xs]} v_h^* \cdot \nabla_{\Gamma_h[\xs]} \psi_h +
\int_{\Gamma_h[\xs]} v_h^* \cdot  \psi_h 
\\
& = \int_{\Gamma_h[\xs]} \nabla_{\Gamma_h[\xs]} \bigl(g(u_h^*)\bigr) \cdot \nabla_{\Gamma_h[\xs]} \psi_h +
\int_{\Gamma_h[\xs]} g(u_h^*) \cdot  \psi_h 
\\
& \quad 
+ \int_{\Gamma_h[\xs]} d_v \cdot  \psi_h 
\end{align*}
for all $\psi_h\in S_h[\xs]^3$, 
and
\begin{align*}
&\int_{\Gamma_h[\xs]} \!\partial^\bullet_h u_h^* \cdot \varphi_h
+
\int_{\Gamma_h[\xs]} \!\nabla_{\Gamma_h[\xs]} u_h^* \cdot \nabla_{\Gamma_h[\xs]} \varphi_h
= 
\int_{\Gamma_h[\xs]} \! f(u_h^*, \nabla_{\Gamma_h[\xs]}u_h^*) \cdot  \varphi_h
\\
& \quad + \int_{\Gamma_h[\xs]} d_u \cdot  \varphi_h 
\end{align*}
for all $\varphi_h \in S_h[\xs]^4$. Subtracting the weak formulation \eqref{weak form} of the equations for the exact solution, we thus have
\begin{align*}
 &\int_{\Gamma_h[\xs]} d_v \cdot  \psi_h = \biggl(\int_{\Gamma_h[\xs]} \nabla_{\Gamma_h[\xs]} v_h^* \cdot \nabla_{\Gamma_h[\xs]} \psi_h
 - \int_{\Gamma[X]} \nabla_{\Gamma[X]} v \cdot  \nabla_{\Gamma[X]} \psi_h^l \biggr)
 \\
 & \quad + \biggl( \int_{\Gamma_h[\xs]} v_h^* \cdot  \psi_h - \int_{\Gamma[X]}  v \cdot   \psi_h^l \biggr)
 \\
 & \quad - \biggl( \int_{\Gamma_h[\xs]} \nabla_{\Gamma_h[\xs]} \bigl(g(u_h^*)\bigr) \cdot \nabla_{\Gamma_h[\xs]} \psi_h 
 -  \int_{\Gamma[X]} \nabla_{\Gamma[X]} \bigl(g(u)\bigr) \cdot  \nabla_{\Gamma[X]} \psi_h^l \biggr)
 \\
 &\quad -  \biggl( \int_{\Gamma_h[\xs]} g(u_h^*) \cdot  \psi_h  - \int_{\Gamma[X]}  g(u) \cdot   \psi_h^l \biggr)
\end{align*}
for all $\psi_h\in S_h[\xs]^3$, 
and 
\begin{align*}
& \int_{\Gamma_h[\xs]} d_u \cdot  \varphi_h  = \biggl( \int_{\Gamma_h[\xs]} \!\partial^\bullet_h u_h^* \cdot \varphi_h -
 \int_{\Gamma[X]}  \partial^\bullet u \cdot   \varphi_h^l \biggr)
\\
& \quad + \biggl[ \int_{\Gamma_h[\xs]} \!\nabla_{\Gamma_h[\xs]} u_h^* \cdot \nabla_{\Gamma_h[\xs]} \varphi_h 
- \int_{\Gamma[X]} \nabla_{\Gamma[X]} u \cdot  \nabla_{\Gamma[X]} \varphi_h^l \biggr]
\\ 
&- \biggl( \int_{\Gamma_h[\xs]} \! f(u_h^*, \nabla_{\Gamma_h[\xs]}u_h^*) \cdot  \varphi_h - 
 \int_{\Gamma[X]}  f(u,\nabla_{\Gamma[X]}u )\cdot   \varphi_h^l \biggr)
\end{align*}
for all $\varphi_h \in S_h[\xs]^4$. Here we note that the critical term in big square brackets equals
\bcl
$$
-\biggl[ \int_{\Gamma_h[\xs]} \! u_h^* \cdot  \varphi_h 
- \int_{\Gamma[X]} u \cdot  \varphi_h^l \biggr]
$$
\ecl
thanks to the definition of $u_h^*$ in \eqref{uhs-ritz}. \bbk The remaining \ebk terms do not contain the gradient of the test function. This allows us obtain estimates of $d_u$ in the $L^2$ norm.

From here on, all the terms in brackets can be bounded by the same arguments as for the corresponding terms in \cite{KLLP2017}, using the bound \eqref{ritz-bound} instead of the bound for the interpolation error where required. These arguments are based on geometric estimates that were previously proved in \cite{Demlow2009,Dziuk88,DziukElliott_ESFEM,DziukElliott_L2,Kovacs2017}. We refer to \cite{KLLP2017} for the details.
\qed
\end{proof}

\section{Proof of Theorem~\ref{theorem: coupled error estimate}}
\label{section: proof completed}
The errors are decomposed using interpolations for $X$ and $v$ and the Ritz map \eqref{uhs-ritz} for $u$ and using the definition of the composed lift $L$ from Section~\ref{subsec:lifts}: 
\begin{align*}
  X_h^L  - X  =&\ \big( \widehat X_h  - X_h^*  \big)^{l} +  \big((X_h^*)^l  - X \big), \\
    v_h^{L}  - v  =&\ \big(\widehat v_h  - v_h^*)^{l} + \big((v_h^*)^l  - v \big) , \\
 u_h^{L}  - u  =&\ \big(\widehat u_h  - u_h^* \big)^{l} + \big((u_h^*)^l  - u \big) .
\end{align*}

The last terms  in these formulas can be bounded in the $H^1(\Gamma)$ norm by $Ch^k$, using  the interpolation error bounds from \cite[Proposition 2.7]{Demlow2009} for $X$ and~$v$, and the Ritz-map error bound \eqref{ritz-bound}  for $u$.

To bound the first terms on the right-hand sides, we use  the stability estimate of Proposition~\ref{proposition:stability - coupled problem} together with the defect bounds of Lemma~\ref{lemma: semidiscrete residual} to obtain
$$
\normK{\ex} + \normK{\ev} + \normK{\eu} \le Ch ^k.
$$
By the equivalence of norms shown in \cite[Lemma~3]{Dziuk88} and by \eqref{K-H1} we have (omitting the argument $t$)
$$
    \| \big(\widehat u_h - u_h^*\big)^{l} \|_{H^1(\Ga[X])} \le
    c \| \widehat u_h - u_h^* \|_{H^1(\Ga_h[\xs])}
    = c \normK{\eu},
$$
and similarly for $\widehat v_h - v_h^*$ and $\widehat X_h - X_h^*$. This proves the result.
%

\section{Stability of the  full discretization}
\label{section:stability-full}

\subsection{Auxiliary results by Dahlquist and Nevanlinna \& Odeh}

We recall two important results that enable us to use energy estimates for BDF methods up to order 5: the first result is from Dahlquist's $G$-stability theory, and the second one from the multiplier technique of Nevanlinna and Odeh.

\begin{lemma}[Dahlquist \cite{Dahlquist}]
\label{lemma: Dahlquist}
    Let $\delta(\zeta) = \sum_{j=0}^q \delta_j \zeta^j$ and $\mu(\zeta) = \sum_{j=0}^q \mu_j \zeta^j$ be polynomials of degree at most $q$ (at least one of them of degree $q$) that have no common divisor. Let $\la \, \cdot , \cdot \, \ra$ denote an inner product on $\R^N$. 
    If
    \begin{equation*}
        \textnormal{Re} \frac{\delta(\zeta)}{\mu(\zeta)} > 0 \qquad \textrm{for} \quad |\zeta|<1,
    \end{equation*}
    then there exists a symmetric positive definite matrix $G = (g_{ij}) \in \R^{q\times q}$ 
    such that for all $\bfw_0,\dotsc,\bfw_q\in\R^N$
    \begin{equation*}
        \Big\la \sum_{i=0}^q \delta_i \bfw_{q-i} , \sum_{i=0}^q \mu_i \bfw_{q-i}  \Big\ra \ge \sum_{i,j=1}^q g_{ij} \la \bfw_i , \bfw_j \ra - \sum_{i,j=1}^q g_{ij} \la \bfw_{i-1} , \bfw_{j-1} \ra .
      \end{equation*}
\end{lemma}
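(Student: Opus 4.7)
The plan is to prove the lemma by reducing to the scalar case $N=1$ and then producing $G$ from the positivity assumption via a telescoping Fourier argument. Both sides of the claimed inequality are symmetric bilinear in the Gram entries $\la \bfw_i,\bfw_j\ra$. Testing on $\bfw_j = \alpha_j u$ for arbitrary scalars $\alpha_j \in \R$ and a fixed unit vector $u$, and using polarization, reduces the statement to scalars $w_j \in \R$ with the usual product. Any symmetric positive definite $G$ realizing the scalar inequality automatically realizes the general inequality with the same coefficients $g_{ij}$.

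In the scalar case I would rewrite the sought inequality in the telescoping form
$$B(w_0,\dots,w_q) \ge V(w_1,\dots,w_q) - V(w_0,\dots,w_{q-1}), \qquad V(y) := y^\top G y,$$
where $B(w_0,\dots,w_q) := \bigl(\sum_{i=0}^q \delta_i w_{q-i}\bigr)\bigl(\sum_{i=0}^q \mu_i w_{q-i}\bigr)$ is the scalar left-hand side. Summing this telescoped inequality along a compactly supported sequence $(w_n)_{n\ge 0}$ with $w_n=0$ for $n>M$ collapses the right-hand side to the bounded quantity $V(w_{M-q+1},\dots,w_M) - V(w_0,\dots,w_{q-1})$, while the left-hand side becomes $\sum_n a_n b_n$ with $a_n = \sum_j \delta_j w_{n-j}$ and $b_n = \sum_j \mu_j w_{n-j}$. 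Plancherel applied to the generating function $\hat w(\zeta) = \sum_n w_n \zeta^n$ expresses this as
$$\frac{1}{2\pi}\int_0^{2\pi} \Re\bigl(\delta(e^{i\theta})\,\overline{\mu(e^{i\theta})}\bigr)\,|\hat w(e^{i\theta})|^2\, \d\theta,$$
which is nonnegative because the hypothesis $\Re(\delta/\mu)>0$ on $|\zeta|<1$ extends to the boundary $|\zeta|=1$ by continuity. This confirms the direction of the inequality in the cumulative sense and tells us that the per-step inequality we want is its ``differential'' form, a nonnegative Parseval identity written step by step.

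The main technical obstacle is the explicit construction of a symmetric positive definite $G$ realizing the one-step inequality, not just its cumulative consequence. I would follow Dahlquist's approach: regard the requirement $B(w_0,\dots,w_q) - V(w_1,\dots,w_q) + V(w_0,\dots,w_{q-1}) \ge 0$ as a linear matrix inequality in the unknown symmetric $G$, and convert it into a Stein (discrete Lyapunov) equation involving the companion matrix of $\mu$ and a row vector of coefficients read off from $\delta$. The positivity condition on $\delta/\mu$ in the open unit disk is precisely the discrete-time \emph{positive-real} condition on the rational transfer function $\delta/\mu$; invoking the discrete Kalman--Yakubovich--Popov lemma (equivalently, Dahlquist's original Riesz--Fej\'er factorization of the nonnegative trigonometric polynomial $\delta(e^{i\theta})\overline{\mu(e^{i\theta})} + \overline{\delta(e^{i\theta})}\mu(e^{i\theta})$) produces a symmetric positive definite solution $G$ of the associated Stein equation, and a direct coefficient-matching computation verifies that this $G$ realizes the claimed inequality for every $(w_0,\dots,w_q)$. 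The most delicate point is ensuring that $G$ is strictly positive definite rather than only positive semidefinite: this uses the strict disk inequality together with the assumed coprimality of $\delta$ and $\mu$, which rules out pole--zero cancellations that would otherwise collapse the rank of $G$.
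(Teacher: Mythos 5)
The paper does not prove this lemma: it is stated as Lemma~\ref{lemma: Dahlquist} and cited directly to Dahlquist's 1978 paper \cite{Dahlquist}, so there is no in-paper argument to compare your proposal against.

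On its own terms, your reduction to the scalar case $N=1$ is correct: writing $\mathrm{LHS}-\mathrm{RHS}$ as a bilinear form in the Gram entries $\la \bfw_i,\bfw_j\ra$ with symmetrized coefficient matrix $A$, the scalar inequality for all real sequences is exactly $A\succeq 0$, and $A\succeq 0$ lifts to arbitrary $N$ because $\mathrm{tr}(A\,\Gamma)\ge 0$ whenever the Gram matrix $\Gamma$ is also positive semidefinite. The Plancherel/telescoping paragraph, however, is purely motivational and should not be presented as ``confirming the direction'' of the inequality: it shows that cumulative nonnegativity is a \emph{necessary} consequence of the existence of $G$, but a nonnegative sum never certifies that a per-step decomposition into a quadratic form plus a telescoped increment exists, so that paragraph contributes nothing to the construction of $G$.

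The actual substance of the lemma---constructing a \emph{strictly} positive definite $G$ and verifying the one-step inequality---is deferred in your proposal to the discrete Kalman--Yakubovich--Popov lemma and/or the Riesz--Fej\'er factorization of $\Re\bigl(\delta(e^{\ii\theta})\overline{\mu(e^{\ii\theta})}\bigr)$. That is indeed Dahlquist's route, so the plan is sound, but as written it is an attribution rather than a derivation: you do not exhibit the factorization, the Stein equation, or the coefficient match that identifies $G$, and the crucial upgrade from $G\succeq 0$ to $G\succ 0$ via coprimality of $\delta$ and $\mu$ is asserted in one sentence without argument (this is precisely the point where a pole--zero cancellation would otherwise make $G$ singular, and it needs a careful rank count). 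Since the KYP lemma is of the same depth as the theorem being proved, the proposal would need to either prove that ingredient or carry out Dahlquist's Riesz--Fej\'er construction explicitly to count as a complete proof.
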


In view of the following result, the choice $\mu(\zeta)=1-\eta\zeta$ together with the polynomial $\delta(\zeta)$ of the BDF methods will play an important role later on.
\begin{lemma}[Nevanlinna \& Odeh \cite{NevanlinnaOdeh}]
\label{lemma: NevanlinnaOdeh multiplier}
    If $q\leq5$, then there exists $0\leq\eta<1$ \st\ for $\delta(\zeta)=\sum_{\ell=1}^q \frac{1}{\ell}(1-\zeta)^\ell$,
    \begin{equation*}
        \textnormal{Re} \,\frac{\delta(\zeta)}{1-\eta\zeta} > 0 \qquad \textrm{for} \quad |\zeta|<1.
    \end{equation*}
    The smallest possible values of $\eta$ are found to be 
    $\eta= 0,  0,  0.0836, 0.2878,  0.8160$ 
    for $q=1,\dotsc,5$, respectively.
\end{lemma}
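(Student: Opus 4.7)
The plan is to reduce the positivity of $\textnormal{Re}(\delta(\zeta)/(1-\eta\zeta))$ on the open disk to a trigonometric inequality on the unit circle, and then to verify that inequality case-by-case for $q=1,\dots,5$, picking the smallest admissible $\eta$ in each case.

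First, since $|\eta|<1$ the denominator $1-\eta\zeta$ does not vanish on the closed unit disk $\overline{\mathbb D}$, so $F(\zeta):=\delta(\zeta)/(1-\eta\zeta)$ is holomorphic on a neighbourhood of $\overline{\mathbb D}$ and $\textnormal{Re}\,F$ is harmonic there. By the minimum principle it suffices to verify $\textnormal{Re}\,F(e^{i\theta})\ge 0$ for all $\theta\in\mathbb R$. Multiplying numerator and denominator of $F(e^{i\theta})$ by the conjugate $1-\eta e^{-i\theta}$ (whose squared modulus $1-2\eta\cos\theta+\eta^2$ is positive), this reduces to the trigonometric inequality
\[
P_\eta(\theta) \,:=\, \textnormal{Re}\,\delta(e^{i\theta}) - \eta\,\textnormal{Re}\bigl(e^{-i\theta}\delta(e^{i\theta})\bigr) \,\ge\, 0 \quad\textnormal{for all } \theta\in\mathbb R .
\]
Since this is affine in $\eta$, the set of admissible $\eta\in[0,1)$ is an interval, and the optimal $\eta_q^\ast$ is the smallest one satisfying the constraint globally in $\theta$.

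Next, I would insert the explicit form $\delta(\zeta)=\sum_{\ell=1}^q \ell^{-1}(1-\zeta)^\ell$. On the unit circle one has $1-e^{i\theta}=-2i\sin(\theta/2)\,e^{i\theta/2}$, so $\delta(e^{i\theta})$ is a trigonometric polynomial whose real part is a polynomial in $\cos(\theta/2)$ and $\sin(\theta/2)$ of degree at most~$q$. After substituting $s=\sin^2(\theta/2)\in[0,1]$, the condition $P_\eta(\theta)\ge 0$ becomes a polynomial inequality of degree~$q$ on $[0,1]$, accessible by elementary means.

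For $q=1,2$ the BDF methods are A-stable, meaning $\textnormal{Re}\,\delta(e^{i\theta})\ge 0$ already on the unit circle, and $\eta=0$ works directly. For $q=3,4,5$ the function $\textnormal{Re}\,\delta(e^{i\theta})$ changes sign, and a corrective multiplier $\eta>0$ is required. At angles where $\textnormal{Re}\,\delta(e^{i\theta})<0$, the two terms in $P_\eta$ must balance, forcing $\textnormal{Re}(e^{-i\theta}\delta(e^{i\theta}))<0$ as well and yielding the pointwise constraint
\[
\eta \,\ge\, \frac{\textnormal{Re}\,\delta(e^{i\theta})}{\textnormal{Re}\bigl(e^{-i\theta}\delta(e^{i\theta})\bigr)} ;
\]
the optimal $\eta_q^\ast$ is the supremum of this ratio over the compact set of such angles. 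This one-dimensional maximization yields the tabulated values $0.0836,\,0.2878,\,0.8160$ for $q=3,4,5$, respectively.

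The main obstacle will be the explicit optimization for $q=5$, where the polynomial $p_\eta(s)$ has degree five, the extremizing angle is not available in closed form, and $\eta_5^\ast\approx 0.816$ lies close to the critical value~$1$, so the positivity margin is small and a careful (possibly computer-assisted) analysis is required, as in Nevanlinna~\&~Odeh~\cite{NevanlinnaOdeh}. That no analogous $\eta<1$ exists for $q\ge 6$ is consistent with the classical failure of A$(\alpha)$-stability of BDF$q$ beyond $q=5$, so the restriction $q\le 5$ is intrinsic rather than technical.
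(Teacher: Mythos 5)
The paper does not prove this lemma; it is quoted with a citation to Nevanlinna \& Odeh, so there is no in-paper argument to compare against. Your proposal is a sketch of how the result would be established from scratch, and the reduction you outline is the standard and correct one.

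Specifically: since $|\eta|<1$, $1-\eta\zeta$ is nonvanishing on $\overline{\mathbb D}$, so $\textnormal{Re}\,F$ with $F=\delta/(1-\eta\zeta)$ is harmonic in a neighbourhood of $\overline{\mathbb D}$; the minimum principle together with $F(0)=\delta(0)=\sum_{\ell=1}^q 1/\ell>0$ (so $\textnormal{Re}\,F\not\equiv 0$) converts boundary nonnegativity into interior strict positivity. Rationalizing the denominator gives the affine-in-$\eta$ condition $P_\eta(\theta)=\textnormal{Re}\,\delta(e^{i\theta})-\eta\,\textnormal{Re}\bigl(e^{-i\theta}\delta(e^{i\theta})\bigr)\ge 0$; and since $\textnormal{Re}\,\delta(e^{i\theta})=\sum_j\delta_j\cos j\theta$ and $\textnormal{Re}\bigl(e^{-i\theta}\delta(e^{i\theta})\bigr)=\sum_j\delta_j\cos (j-1)\theta$ are polynomials of degree $\le q$ in $\cos\theta=1-2s$, this is a degree-$q$ polynomial inequality on $s\in[0,1]$, as you claim. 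The identifications for $q=1,2$ via A-stability are correct (indeed $\textnormal{Re}\,\delta(e^{i\theta})=(1-\cos\theta)^q$ for $q=1,2$).

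Two minor gaps in the write-up. First, when extracting the lower bound on $\eta$ you only impose the constraint at angles where $\textnormal{Re}\,\delta(e^{i\theta})<0$; one should note explicitly that angles with $\textnormal{Re}\bigl(e^{-i\theta}\delta(e^{i\theta})\bigr)>0$ impose an \emph{upper} bound on $\eta$, and part of what Nevanlinna \& Odeh verify is that the admissible interval for $\eta$ is nonempty and meets $[0,1)$ for $q\le 5$ but not for $q\ge 6$. Second, as you acknowledge, the concrete values $0.0836,0.2878,0.8160$ require an explicit (effectively computational) one-dimensional optimization; this is the actual content of the cited reference and cannot be dispensed with in a self-contained proof. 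With those caveats made explicit, the approach is sound.
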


 These results have previously been applied in the error analysis of BDF methods for various parabolic problems in
\cite{AkrivisLiLubich_quasilinBDF,AkrivisLubich_quasilinBDF,KL2018,KovacsPower_quasilinear,LubichMansourVenkataraman_bdsurf}, where they were used when testing the error equation with the error. In contrast to these papers, here these results are used for testing the error equation with the discretized time derivative of the error.

\subsection{Defects and errors}

We choose nodal vectors $\xs(t)\in\R^{3N}$, $\vs(t)\in\R^{3N}$ and $\us(t)\in\R^{4N}$ determined by the exact solution $X$, $v$ and $u=(\n,H)$ as
in Section~\ref{subsec:stability - error eqs}, and we abbreviate $\bfx_*^n=\xs(t_n)$, $\bfv_*^n=\vs(t_n)$, and $\bfu_*^n=\us(t_n)$. When we insert these reference values into the numerical scheme, we obtain defects $\dv^n$, $\du^n$, $\dx^n$ that will be studied in the next section: for $n\ge q$,
\begin{equation}
\label{BDF-defects}
	\begin{aligned}
		\bfK(\widetilde \bfx_*^n) \bfv_*^n &= \bfg(\widetilde \bfx_*^n,\widetilde \bfu_*^n) + \redon \bfM(\wtxls^n) \redoff \dv^n, \\
		\bfM(\widetilde \bfx_*^n)\dot \bfu_*^{n} + \bfA(\widetilde \bfx_*^n) \bfu_*^n &= \bff(\widetilde \bfx_*^n,\widetilde \bfu_*^n) + \redon \bfM(\wtxls^n) \redoff \du^n,  \\
		\dot \bfx_*^{n} &= \bfv_*^n + \dx^n,
	\end{aligned} 
\end{equation}
where
\begin{align*}
&\widetilde \bfx_*^n = \sum_{j=0}^{q-1} \gamma_j \bfx_*^{n-1-j}, \qquad 
\widetilde \bfu_*^n = \sum_{j=0}^{q-1} \gamma_j \bfu_*^{n-1-j}, 
\\
&\dot \bfx_*^{n} = \frac{1}{\tau} \sum_{j=0}^q \delta_j \bfx_*^{n-j}, \qquad\
\dot \bfu_*^{n} = \frac{1}{\tau} \sum_{j=0}^q \delta_j \bfu_*^{n-j}.
\end{align*}
The errors of the numerical solution values $\bfx^n$, $\bfv^n$ and $\bfu^n$ are denoted by
\begin{align*}
	 \ex^n = \bfx^n - \xls^n , \qquad \ev^n = \bfv^n - \vls^n, \qquad \eu^n = \bfu^n - \uls^n,
\end{align*}
and we abbreviate
\begin{equation}\label{dotexun}
\dotex^n = \frac{1}{\tau} \sum_{j=0}^q \delta_j \ex^{n-j}, \qquad 
\doteu^n = \frac{1}{\tau} \sum_{j=0}^q \delta_j \eu^{n-j} .
\end{equation}
Subtracting \eqref{BDF-defects} from \eqref{BDF},  we obtain the following error equations:
\bcl
\begin{subequations} 
\label{eq:error equations - full}
\begin{align}
\label{eq:error eq - v - full}
	\bfK(\widetilde \bfx^n)  \ev^n &= \bfr_\bfv^n ,
	\\
\label{eq:error eq - u - full}
	 \bfM(\widetilde \bfx^n)\doteu^n+ \bfA(\widetilde \bfx^n)\eu^n &= \bfr_\bfu^n ,
	 \\
\label{eq:error eq - x - full}
    \dotex^n &= \ev^n - \dx^n , 
\end{align}
\end{subequations}
where
\begin{subequations} 
\label{eq:error terms - full}
\begin{align}
\label{eq:error term - v - full}
        \bfr_\bfv^n
	=&\ -  \big( \bfK(\widetilde \bfx^n)-\bfK(\widetilde \bfx_*^n) \big) \vls^n 
	\\
	\nonumber 
	&\ + \big(\bfg(\widetilde \bfx^n,\widetilde \bfu^n) - \bfg(\widetilde \bfx_*^n,\widetilde \bfu_*^n)\big) - \redon \bfM(\wtxls^n) \redoff \dv^n , \\
\label{eq:error term - u - full}
	 \bfr_\bfu^n
	=&\ -  \big( \bfM(\widetilde \bfx^n)-\bfM(\widetilde \bfx_*^n) \big) \dot\bfu_*^n  
	 \\
	\nonumber
	&\ - \big( \bfA(\widetilde \bfx^n)-\bfA(\widetilde \bfx_*^n) \big) \bfu_*^n 
	 \\
	\nonumber &\ + \big(\bff(\widetilde \bfx^n,\widetilde \bfu^n) - \bff(\widetilde \bfx_*^n,\widetilde \bfu_*^n)\big) - \redon \bfM(\wtxls^n) \redoff \du^n .
\end{align}
\end{subequations}

%
%
%

\subsection{Stability estimate}

The following stability result is the analogue of  Proposition~\ref{proposition:stability - coupled problem} for the full discretization.

\begin{proposition}
\label{proposition:stability - coupled problem - full} Consider the linearly implicit BDF time discretization of order $q$ with $2\le q \le 5$.
 Assume that, for step sizes restricted by $\tau\le C_0 h$, there exists $\kappa$ with $1 < \kappa \le k$ such that the defects are bounded by 
 		\begin{equation}
	\label{eq:assume small defects}
		\|\dx^n\|_{\bfK(\xls^n)} \leq c h^\kappa , \quad
		\|\dv^n\|_{\star,\xls^n} \leq c h^\kappa , \quad 
		\|\du^n\|_{\bfM(\xls^n)} \leq c h^\kappa ,
	\end{equation} 
	 for $q\tau \leq n\tau \leq T$, and that also the errors of the starting values are bounded by
		\begin{equation}
	\label{eq:assume small initial values}
		\|\ex^i\|_{\bfK(\xls^n)} \leq c h^\kappa , \quad
		\|\ev^i\|_{\star,\xls^n} \leq c h^\kappa , \quad 
		\|\eu^i\|_{\bfM(\xls^n)} \leq c h^\kappa ,
	\end{equation} 
for $i=0,\dots,q-1$.
Then, there exist  $h_0>0$ and $\tau_0>0$ such that the following stability estimate holds for all $h\leq h_0$, $\tau\le\tau_0$, and $n$ with $n\tau \le T$, satisfying $\tau \leq C_0 h$ (where $C_0>0$ can be chosen arbitrarily),
	\begin{equation}
	\label{eq:stability bound - full}
		\begin{aligned}
			& \| \ex^n \|_{\bfK(\xls^n)}^2 +  \| \ev^n \|_{\bfK(\xls^n)}^2 +  \| \eu^n \|_{\bfK(\xls^n)}^2 \\
			& \leq \ C  \sum_{i=0}^{q-1}\Bigl( \| \ex^i \|_{\bfK(\xls^i)}^2 + \| \ev^i \|_{\bfK(\xls^i)}^2 + \| \eu^i \|_{\bfK(\xls^i)}^2\bigr) \\
			& \quad +C \max_{0\le j \le n}\|\dv^j\|_{\star,\xls^j}^2  
			+ C \tau\sum_{j=q}^{n} \|\du^j\|_{\bfM(\xls^j)}^2 + C  \tau\sum_{j=q}^{n} \|\dx^j\|_{\bfK(\xls^j)}^2,
		\end{aligned}
	\end{equation}
	where $C$ is independent of $h$, $\tau$ and $n$ with $n\tau\le T$, but depends on the final time $T$.
\end{proposition}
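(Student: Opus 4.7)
\textbf{Proof plan for Proposition~\ref{proposition:stability - coupled problem - full}.}
The strategy is to transfer, step by step, the energy argument of Proposition~\ref{proposition:stability - coupled problem} to the BDF setting. As in the semi-discrete case, I set up a bootstrap: let $n^\ast \le T/\tau$ be the largest index such that, for all $0 \le j \le n^\ast$, the $W^{1,\infty}$ bounds $\|e_x^j\|_{W^{1,\infty}}, \|e_v^j\|_{W^{1,\infty}}, \|e_u^j\|_{W^{1,\infty}} \le h^{(\kappa-1)/2}$ hold on $\Gamma_h[\xls^j]$, where $e_\bullet^j$ is the finite element function associated to $\bfe_\bullet^j$. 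Thanks to~\eqref{eq:assume small initial values}, the inverse inequality, and $\tau \le C_0 h$, these bounds hold initially for $j=0,\dots,q-1$, so $n^\ast \ge q-1$. Under this assumption, the extrapolated differences $\widetilde\bfe_x^n = \widetilde\bfx^n-\widetilde\bfx_*^n = \sum_{j=0}^{q-1}\gamma_j \bfe_x^{n-1-j}$ satisfy a corresponding $W^{1,\infty}$ bound, so Lemma~\ref{lemma:theta-independence} and the matrix-perturbation bounds \eqref{eq:matrix difference bounds}--\eqref{norm-equiv-t} all apply with $\bfy = \widetilde\bfx_*^n$, $\bfx = \widetilde\bfx^n$, and the discrete surfaces involved are uniformly non-degenerate.

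The main new ingredient is the treatment of the discrete time derivative term in \eqref{eq:error eq - u - full}. In the semi-discrete case I tested with $\doteu$, which is not directly admissible for BDF of order $q\ge 2$. Instead, following the Nevanlinna--Odeh strategy, I take the inner product of \eqref{eq:error eq - u - full} with $\doteu^n - \eta\, \bfe_\bfu^{n-1}/\tau\cdot(\textrm{suitably shifted})$; more precisely, one applies Dahlquist's Lemma~\ref{lemma: Dahlquist} with the bilinear form $\langle\cdot,\cdot\rangle = \bfA(\widetilde\bfx_*^n)$ (or $\bfM(\widetilde\bfx_*^n)$) and the polynomial pair $(\delta(\zeta),\mu(\zeta))=(\delta(\zeta),1-\eta\zeta)$ with the Nevanlinna--Odeh $\eta$ from Lemma~\ref{lemma: NevanlinnaOdeh multiplier}. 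This yields a positive $\mathbf{M}$-weighted quadratic contribution in $\doteu^n$ and a telescoping $G$-norm contribution built from $\bfA$ applied to the tuple $(\eu^n,\dots,\eu^{n-q+1})$. The $\bfM$- and $\bfA$-matrices actually appearing are $\bfM(\widetilde\bfx^n)$ and $\bfA(\widetilde\bfx^n)$; their differences from the reference $\bfM(\widetilde\bfx_*^n)$, $\bfA(\widetilde\bfx_*^n)$ are estimated by Lemma~\ref{lemma:matrix differences} exactly as in steps (iii)--(iv) of Proposition~\ref{proposition:stability - coupled problem}, producing contributions of the form $C\|\widetilde\bfe_x^n\|_{\bfK}\,(\|\doteu^n\|_{\bfM}+\|\eu^n\|_{\bfA})$. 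The nonlinearities $\bff(\widetilde\bfx^n,\widetilde\bfu^n)-\bff(\widetilde\bfx_*^n,\widetilde\bfu_*^n)$ and $\bfg(\widetilde\bfx^n,\widetilde\bfu^n)-\bfg(\widetilde\bfx_*^n,\widetilde\bfu_*^n)$ are bounded along $\theta$-paths just as in steps (v) of Proposition~\ref{proposition:stability - coupled problem}, using the uniform $W^{1,\infty}$ bounds from the bootstrap to make the local Lipschitz arguments.

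For the velocity error I test \eqref{eq:error eq - v - full} with $\ev^n$, exactly mirroring step (B) of the semi-discrete proof: since no time derivative appears in this equation, the only change is that the matrix/nonlinearity arguments are extrapolated values $\widetilde\bfx^n,\widetilde\bfu^n$, which are handled by the same $\theta$-path technique. This yields $\|\ev^n\|_{\bfK}^2 \le c(\|\widetilde\bfe_x^n\|_{\bfK}^2 + \|\widetilde\bfe_u^n\|_{\bfK}^2 + \|\dv^n\|_{\star,\xls^n}^2)$. For the position error I use the discrete integration provided by the BDF relation $\dotex^n = \ev^n - \dx^n$: this is itself a zero-stable $q$-step scheme for the trivial ODE $\dot{\ex}=\ev-\dx$, so by standard BDF stability (again a $G$-norm telescoping) one obtains
\[
\|\ex^n\|_{\bfK(\xls^n)}^2 \le C\sum_{i=0}^{q-1}\|\ex^i\|_{\bfK}^2 + C\tau\sum_{j=q}^n \bigl(\|\ev^j\|_{\bfK}^2 + \|\dx^j\|_{\bfK}^2\bigr),
\]
with the time variation of $\bfK(\xls^n)$ again absorbed via~\eqref{norm-equiv-t}.

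The three estimates are combined by summing in $n$, plugging the $\ex$ bound into the $\ev$ bound and both into the $\eu$ bound, and applying the discrete Gronwall lemma. This produces the stated bound \eqref{eq:stability bound - full} on $n\le n^\ast$, with constants depending on $T$ but not on $h,\tau,n$. To close the bootstrap I use \eqref{eq:stability bound - full} together with the assumed defect bounds to get $\|\bfe_\bullet^{n^\ast+1}\|_{\bfK} \le Ch^\kappa$; an inverse inequality combined with $\tau\le C_0 h$ (which controls the extrapolations) then gives $W^{1,\infty}$ bounds $\le c h^{\kappa-1} \le \tfrac12 h^{(\kappa-1)/2}$ for $h$ small, contradicting maximality of $n^\ast$ unless $n^\ast\tau = T$. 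The principal obstacles I expect are (a) the correct application of Dahlquist's theorem when the inner product is given by the surface-dependent matrix $\bfA(\widetilde\bfx_*^n)$ rather than a fixed one---this is handled by using \eqref{eq:matrix derivatives} in its discrete-in-time form to treat the temporal variation as a perturbation absorbable by Gronwall---and (b) the presence of $\widetilde\bfx^n,\widetilde\bfu^n$ (extrapolated) in the nonlinearities, which forces all $\theta$-path estimates to be formulated relative to extrapolated errors rather than $\bfe_\bullet^n$ themselves, but these are bounded by the previous $\bfe_\bullet^{n-1},\dots,\bfe_\bullet^{n-q}$ and thus controlled by the inductive hypothesis.
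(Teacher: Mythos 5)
Your overall architecture matches the paper's: bootstrap $W^{1,\infty}$ assumption, test the $\bfu$-equation with the Nevanlinna--Odeh combination $\doteu^n-\eta\doteu^{n-1}$, invoke Dahlquist's $G$-stability with the surface-dependent stiffness form, estimate the $\theta$-path nonlinearities and mass-matrix differences as in the semi-discrete case, chain the $\bfv$- and $\bfx$-estimates, apply discrete Gronwall, and close the bootstrap. However, you gloss over the single most delicate step, and the way you state the expected output of that step suggests a genuine gap.

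You claim that the differences $\bfA(\widetilde\bfx^n)-\bfA(\widetilde\bfx_*^n)$ can be handled ``exactly as in steps (iii)--(iv)'' of Proposition~\ref{proposition:stability - coupled problem}, ``producing contributions of the form $C\|\widetilde\bfe_x^n\|_{\bfK}(\|\doteu^n\|_{\bfM}+\|\eu^n\|_{\bfA})$.'' A direct application of Lemma~\ref{lemma:matrix differences} to the term $-(\doteu^n)^T\big(\bfA(\widetilde\bfx^n)-\bfA(\widetilde\bfx_*^n)\big)\bfu_*^n$ does \emph{not} give that; it gives $C\|\widetilde\bfe_x^n\|_{\bfA}\,\|\doteu^n\|_{\bfA}\,\|u_h^*\|_{W^{1,\infty}}$, and the factor $\|\doteu^n\|_{\bfA}$ cannot be absorbed by the positive term $\|\doteu^n\|_{\bfM}^2$ coming from the Dahlquist estimate (an inverse inequality would cost an uncontrollable $h^{-1}$). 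In the semi-discrete case this is precisely why part (iv) first rewrites the term with the product rule $\frac{\d}{\d t}\!\big(\eu^T(\bfA(\bfx)-\bfA(\xs))\us\big)$ and then integrates in time so that the boundary term can be absorbed by $\|\eu(t)\|_{\bfA}^2$. The discrete analogue is a non-trivial construction: one must factor the generating polynomial $\delta(\zeta)=(1-\zeta)\sigma(\zeta)$, write $\doteu^n=\sum_{j=0}^{q-1}\sigma_j\,\be\eu^{n-j}$, apply a discrete Leibniz rule for $\be$, and then sum so that the leading terms telescope. Nothing in your sketch indicates this; ``standard BDF $G$-norm telescoping'' handles the BDF-derivative-squared contribution, not the stiffness-matrix-difference contribution, and your stated form of the remainder is what one would wish for, not what one gets without the summation-by-parts.

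A second, related omission: all your invocations of the norm equivalences and matrix-perturbation bounds between successive extrapolated surfaces $\Gamma_h[\widetilde\bfx^{n-1}]$ and $\Gamma_h[\widetilde\bfx^n]$ presuppose a uniform $W^{1,\infty}$ bound on the discrete velocity $\widetilde V^n_h$ (nodal vector $\be\widetilde\bfx^n$) of the numerical surface family. This does not follow directly from the bootstrap $W^{1,\infty}$ bounds on the $e_\bullet^n$; the paper proves it by an induction in $n$ that again uses $\delta(\zeta)=(1-\zeta)\sigma(\zeta)$ and the geometric decay of the coefficients of $1/\sigma(\zeta)$. Without this preparatory step one cannot legitimately raise or lower the time index inside $\bfM$- and $\bfA$-norms, nor control $\bfA(\widetilde\bfx^{n})-\bfA(\widetilde\bfx^{n-1})$. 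Both omissions hinge on the same factorization of $\delta$; recognizing and exploiting it is the central new technical ingredient that distinguishes the fully discrete proof from the semi-discrete one.
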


In Section~\ref{section:defect-full} we will show that under sufficient smoothness assumptions on the solution, the defects satisfy the bounds 
$$
\|\dx^n\|_{\bfK(\xls^n)} \le C\tau^q,\quad\ 
\|\dv^n\|_{\star,\xls^n}  \le C(h^k+\tau^q), \quad\ 
\|\du^n\|_{\bfK(\xls^n)} \le C(h^k+\tau^q).
$$
Hence, condition \eqref{eq:assume small defects} is satisfied under the step-size restriction $\tau\le C_0 h$ if $q\ge 2$.
We note that the error functions $e_x^n,e_v^n \in S_h[\xls^n]^3$  and $e_u^n \in S_h[\xls^n]^4$ with nodal vectors $\ex^{n}$, $\ev^{n}$ and $\eu^n$, respectively, are then bounded by
\begin{equation*}
  \begin{aligned}
	&\| e_x^n \|_{H^1(\Ga_h(\xls^n))} \leq C(h^k+\tau^q), \\ 
	&\| e_v^n \|_{H^1(\Ga_h(\xls^n))} \leq C(h^k+\tau^q) , \\
	&\| e_u^n \|_{H^1(\Ga_h(\xls^n))} \leq C(h^k+\tau^q) ,
  \end{aligned}
	\qquad \textnormal{ for }  n \tau \leq T,
\end{equation*}
provided the starting values are sufficiently accurate.

\begin{proof} The organisation of the proof, including the numbering of its parts, will be the same as that of Proposition~\ref{proposition:stability - coupled problem}.

Let $t^*\in(0,T]$ (which {\it a priori} might depend on  $\tau$ and $h$) be the maximal time such that the following inequalities hold: 
\begin{equation}
\label{eq:assumed bounds - n}
    \begin{aligned}
    	\| e_x^n\|_{W^{1,\infty}(\Ga_h[\xls^n])} &\leq  h^{(\kappa-1)/2} , \\
    	\| e_v^n\|_{W^{1,\infty}(\Ga_h[\xls^n])} &\leq  h^{(\kappa-1)/2} , \\
        \|e_u^n \|_{W^{1,\infty}(\Ga_h[\xls^n])} &\leq  h^{(\kappa-1)/2} , 
    \end{aligned} \qquad \textrm{ for } \quad n\tau \le t^*.
\end{equation}
In the following we assume that $n\tau \le t^*$, so that the above bounds are valid.
At the end of the proof we will show that in fact $t^*$ coincides with $T$.

\medskip
(A) \emph{Estimates for the surface PDE:} In the time-continuous case, we tested the error equation for $\eu$ with the time derivative $\doteu$. Now, in the time-discrete case, we form the difference of equation \eqref{eq:error eq - u - full} for $n$ with $\eta$ times this equation for $n-1$, for $\eta\in[0,1)$ of 
Lemma~\ref{lemma: NevanlinnaOdeh multiplier}, and then we test this difference
with the discrete time derivative $\doteu^n$ defined by \eqref{dotexun}. This yields, for $n\ge q+1$,
\begin{equation} 
\label{eq:error eq tested - u - full}
	\begin{aligned}
	 &(\doteu^n)^T\bfM(\wtx^n)\doteu^n - \eta (\doteu^n)^T\bfM(\wtx^{n-1})\doteu^{n-1} + 
	 (\doteu^n)^T\bfA(\wtx^n)(\eu^n - \eta \eu^{n-1}) 
	 \\
	 & = 
	 -  \eta (\doteu^n)^T\big( \bfA(\wtx^n) -  \bfA(\wtx^{n-1}) \big)  \eu^{n-1} 
	 +  (\doteu^n)^T (\ru^n-\eta\ru^{n-1}).
	\end{aligned}
\end{equation}

To be able to deal with the second term in the first line and the first term in the second line (and further terms later in the proof), we need to estimate $\wtx^n-\wtx^{n-1}$. Let
$$
\widetilde \bfV^n = \be\, \wtx^n = \frac1\tau( \wtx^n-\wtx^{n-1} )
$$
and denote by $\widetilde V_h^n \in S_h[\wtx^n]$ the finite element function on $  \Gamma_h[\wtx^n]$ with nodal vector $\widetilde \bfV^n$.
We will show that under condition \eqref{eq:assumed bounds - n},
\begin{equation} \label{bar-vhn}
\| \widetilde V_h^n \|_{W^{1,\infty}(\Gamma_h[\wtx^n])} \le K
\end{equation}
with a constant $K$ independent of $h$ and $\tau$ and $n$ with $n\tau\le t^*$. 
This condition will play the same crucial role as \eqref{vh-bound} in the time-continuous case.
We proceed by induction and assume that this bound holds up to $n-1$. In the same way as \eqref{vh-bound} led to the norm equivalence~\eqref{norm-equiv-t}, this induction hypothesis implies that for $m=n-1$,
\begin{align}
\nonumber
&\text{the norms $\|\cdot\|_{\bfM(\wtx^j)}$ are $h$- and $\tau$-uniformly equivalent for $q\le j \le m$,}
\\
&\text{and so are the norms $\|\cdot\|_{\bfA(\wtx^j)}$.}
\label{norm-equiv-n}
\end{align}
Next we note that the finite element function $\widetilde v_h^n$ on
$\Gamma_h[\wtx^{n-1}]$ with nodal vector
$$
\widetilde \bfv^n = \sum_{j=0}^{q-1} \gamma_j \bfv^{n-j-1} = \frac1\tau \sum_{j=0}^q \delta_j \wtx^{n-j}
$$
satisfies, by the norm equivalence \eqref{norm-equiv-n}, by the bounds~\eqref{eq:assumed bounds - n}, and by the fact that finite element interpolation is a bounded operation on $W^{1,\infty}$,
\begin{align*}
&\| \widetilde v_h^n \|_{W^{1,\infty}(\Gamma_h[\wtx^{n-1}])} 
\le \sum_{j=0}^{q-1} |\gamma_j| \,\|v_h^{n-j-1}\| _{W^{1,\infty}(\Gamma_h[\wtx^{n-1}])} 
\\
&\le \sum_{j=0}^{q-1} |\gamma_j| \,\|v_h^{n-j-1}\| _{W^{1,\infty}(\Gamma_h[\wtx^{n-j-1}])} 
\le  \sum_{j=0}^{q-1} |\gamma_j|\, \|v_h^{n-j-1}\| _{W^{1,\infty}(\Gamma_h[\xls^{n-j-1}])} 
\\
&\le  \sum_{j=0}^{q-1} |\gamma_j| \Bigl( \|v_{h,*}^{n-j-1}\| _{W^{1,\infty}(\Gamma_h[\xls^{n-j-1}])} 
+ \|e_v^{n-j-1}\| _{W^{1,\infty}(\Gamma_h[\xls^{n-j-1}])} \Bigr)
\\
&\le  \sum_{j=0}^{q-1} |\gamma_j| \Bigl( c\|v(\cdot,t_{n-j-1})\| _{W^{1,\infty}(\Gamma_h[\xls^{n-j-1}])} 
+ \|e_v^{n-j-1}\| _{W^{1,\infty}(\Gamma_h[\xls^{n-j-1}])} \Bigr) \le C.
\end{align*}
We factorize the generating polynomial $\delta(\zeta)=\sum_{j=0}^q \delta_j\zeta^j = \sum_{\ell=1}^q \frac1\ell(1-\zeta)^\ell$ as
\begin{equation}\label{delta-factor}
\delta(\zeta)=(1-\zeta)\sigma(\zeta).
\end{equation}
The polynomial \blueon $\sigma(\zeta)=\sum_{j=0}^{q-1}\sigma_j \zeta^j$ \blueoff has degree $q-1$ and has no zeros in the closed unit disk (this property expresses the zero-stability of the BDF method, which holds true for $q\le 6$; see \cite[Section~III.3]{HNW}). Therefore, there exists $\varrho<1$ such that
\begin{equation}
\label{eq:inverse of sigma}
	\frac{1}{\sigma(\zeta)} =: \chi(\zeta) = \sum_{j=0}^\infty \chi_j \zeta^j \quad\ \textnormal{ with } |\chi_j| \leq c\varrho^j.
\end{equation}
We rewrite, for $n\ge q$,
$$
\widetilde \bfv^n = \sum_{j=0}^{q-1}  \sigma_j \widetilde \bfV^{n-j} = \sum_{j=q}^n \sigma_{n-j} \widetilde \bfV^{j} + \bfs^n,
$$
with $\sigma_j=0$ for $j\ge q$ and $\bfs^n = \sum_{i=1}^{q-1} \sigma_{n-i} \be \widetilde \bfx^i$, for which we note that $\bfs^n=0$ for $n\ge 2q-2$.
We then obtain 
\begin{equation}\label{inverse convolution}
\widetilde \bfV^n = \sum_{j=q}^n \chi_{n-j} \bigl( \widetilde \bfv^j - \bfs^j),
\end{equation}
because we have (using the associativity of convolution) 
\begin{align*}
	  \sum_{j=q}^n \chi_{n-j}  (\widetilde\bfv^j-\bfs^j) & 
	=   \sum_{j=q}^n \chi_{n-j} \sum_{k=q}^{j} \sigma_{j-k}  \widetilde\bfV^{k} 
        = \sum_{k=q}^{n} \Bigl( \sum_{m=0}^{n-k} \chi_{n-k-m}\sigma_m \Bigr)  \widetilde\bfV^{k} =  \widetilde\bfV^n.
\end{align*}
With the geometric decay of the coefficients $ \chi_{j}$ and the bound for $ \widetilde \bfv^j$ and the assumed accuracy of the starting values, we conclude from \eqref{inverse convolution} to the bound
$$
\| \widetilde V_h^n \|_{W^{1,\infty}(\Gamma_h[\wtx^{n-1}])} \le C
$$
by the same arguments as used for bounding $\widetilde v_h^n$ above. Here, $C$ depends only on a $W^{1,\infty}$ bound of the exact solution on the interval $[0,T]$. Since $\wtx^n = \wtx^{n-1} + \tau \widetilde \bfV^n$, using this bound for $\widetilde V_h^n$ in Lemma~\ref{lemma:theta-independence} yields that the norm equivalence \eqref{norm-equiv-n} extends up to $m=n$ (and not just $n-1$), and so we obtain \eqref{bar-vhn} with $K=2C$.


\medskip
After these preparations, the terms in  \eqref{eq:error eq tested - u - full} are now estimated separately.

(i) On the left-hand side of \eqref{eq:error eq tested - u - full}, the first term is 
$$
(\doteu^n)^T\bfM(\wtx^n)\doteu^n =  \|\doteu^n\|_{\bfM(\wtx^{n})}^2 .
$$
%
The second term is bounded by
\begin{align*}
	&\ (\doteu^n)^T\bfM(\wtx^{n-1})\doteu^{n-1} \\
	&\leq  \|\doteu^n\|_{\bfM(\wtx^{n-1})} \|\doteu^{n-1}\|_{\bfM(\wtx^{n-1})} \\
	&\leq  \tfrac12  \|\doteu^n\|_{\bfM(\wtx^{n-1})}^2 +  \tfrac12  \|\doteu^{n-1}\|_{\bfM(\wtx^{n-1})}^2 \\
	&\leq  \tfrac12 (1+c\tau) \|\doteu^n\|_{\bfM(\wtx^{n})}^2 +  \tfrac12  \|\doteu^{n-1}\|_{\bfM(\wtx^{n-1})}^2,
\end{align*} 
where we used the bound \eqref{bar-vhn} in \eqref{eq:matrix difference bounds} to raise the superscript from $n-1$ to $n$ in the first term of the last line.
This yields
\begin{equation}
\label{M-lower bound}
	\begin{aligned}
		& (\doteu^n)^T\bfM(\wtx^n)\doteu^n - \eta (\doteu^n)^T\bfM(\wtx^{n-1})\doteu^{n-1} 
		\\
		& \qquad\geq  \big( 1 - \tfrac12\eta (1+ c\tau) \big)  \|\doteu^n\|_{\bfM(\wtx^{n})}^2 - \tfrac12\eta \|\doteu^{n-1}\|_{\bfM(\wtx^{n-1})}^2.
	\end{aligned} 
\end{equation}

(ii) We now bound the critical third term on the left-hand side of \eqref{eq:error eq tested - u - full} from below. Thanks to Lemmas~\ref{lemma: Dahlquist} and~\ref{lemma: NevanlinnaOdeh multiplier} we have
\begin{equation}
\label{DNO}
	\begin{aligned}
		& (\doteu^{n})^T \bfA(\wtx^n)(\eu^{n} - \eta \eu^{n-1}) = \Bigl(\frac{1}{\tau} \sum_{i=0}^q \delta_i \eu^{n-i}\Bigr)^T\bfA(\wtx^n)(\eu^{n} - \eta \eu^{n-1})
		\\
		&\geq \frac{1}{\tau} \!\! \sum_{i,j=1}^q g_{ij} (\eu^{n-q+i})^T \bfA(\wtx^n) \eu^{n-q+j} - 
		\frac{1}{\tau} \!\! \sum_{i,j=1}^q g_{ij} (\eu^{n-q+i-1})^T \bfA(\wtx^n) \eu^{n-q+j-1}.
	\end{aligned}
\end{equation}

The first term on the right-hand side of \eqref{eq:error eq tested - u - full} is bounded as follows: recalling that
$\wtx^n-\wtx^{n-1}=\tau\widetilde \bfV^n$ with the bound \eqref{bar-vhn} and using the bound \eqref{eq:matrix difference bounds} and the Young inequality yields
\begin{equation*} 
	\begin{aligned}
		-(\doteu^n)^T\bigl( \bfA(\wtx^n) -  \bfA(\wtx^{n-1}) \bigr)  \eu^{n-1} & \leq c\tau  \|\doteu^n\|_{\bfA(\wtx^n)}\, \|\eu^{n-1}\|_{\bfA(\wtx^{n})}
		\\
		& \leq	 
		\tfrac12 c \rho \tau^2 \|\doteu^n\|_{\bfA(\wtx^n)}^2 + \tfrac12 c\rho^{-1} \|\eu^{n-1}\|_{\bfA(\wtx^n)}^2,
	\end{aligned}
\end{equation*}
where $\rho>0$ will be chosen small, but independent of $h$ and $\tau$. By an inverse inequality we  have
$$
	\|\doteu^n\|_{\bfA(\wtx^n)} \le \frac ch \|\doteu^n\|_{\bfM(\wtx^n)},
$$
and under the step size restriction $\tau\le C_0h$,
we thus obtain
\begin{equation}
\label{A-diff}
	- (\doteu^n)^T\bigl( \bfA(\wtx^n) -  \bfA(\wtx^{n-1}) \bigr)  \eu^{n-1} \le
	c \rho \|\doteu^n\|_{\bfM(\wtx^n)}^2 +  c \rho^{-1}\|\eu^{n-1}\|_{\bfA(\wtx^{n-1})}^2,
\end{equation}
where we  used again the bound \eqref{bar-vhn} in \eqref{eq:matrix difference bounds} to reduce the superscript from $n$ to $n-1$ in the last term.

\medskip
The last expression on the right-hand side of \eqref{eq:error eq tested - u - full} is
$(\doteu^n)^T (\ru^n-\eta\ru^{n-1})$, where $\ru^n$ is defined in~\eqref{eq:error term - u - full}.
The terms that  contain the stiffness matrix $\bfA$ will require new arguments, whereas the other terms are bounded similarly to the corresponding terms in the time-continuous case.

(iii) Using \eqref{eq:assumed bounds - n} in justifying the bound~\eqref{eq:matrix difference bounds} and using the Young inequality,
the terms involving the differences of mass matrices are bounded as,  using the notation $\wtex^n = \widetilde \bfx^n - \wtxls^n = \sum_{i=0}^{q-1} \gamma_i \ex^{n-1-i}$,  
\begin{equation}
\label{eq:rhs estimate - dot e vs solution}
	\begin{aligned}
		& - (\doteu^n)^T \big( \bfM(\wtx^n)-\bfM(\wtxls^n) \big) \bfu_*^n 
		+ \eta (\doteu^n)^T \big( \bfM(\wtx^{n-1})-\bfM(\wtxls^{n-1}) \big) \bfu_*^{n-1} \\
		&\leq  c \| \doteu^n\|_{\bfM(\wtx^n)} \|\wtex^n\|_{\bfK(\wtx^n)}
		+ c \| \doteu^n\|_{\bfM(\wtx^{n-1})} \|\wtex^{n-1}\|_{\bfK(\wtx^{n-1})} \\
		&\leq  \rho \| \doteu^n\|_{\bfM(\wtx^n)}^2  + c  \rho^{-1}   \|\wtex^n\|_{\bfK(\wtx^n)}^2
		+ c  \rho^{-1} \|\wtex^{n-1}\|_{\bfK(\wtx^{n-1})}^2 
	\end{aligned}
\end{equation}
with a small $\rho > 0$, which will be chosen later independently of $h,\tau$ and $n$ with $n\tau\le t^*$.
%

(iv) We next bound the two terms in $(\doteu^n)^T (\ru^n-\eta\ru^{n-1})$ that involve the stiffness matrix $\bfA$:
\begin{align*}
	R_{\bfA}^n &= R_{\bfA}^{1,n} + R_{\bfA}^{2,n} \\
	&= - (\doteu^n)^T \big( \bfA(\wtx^n)-\bfA(\wtxls^n) \big) \bfu_*^n + \eta (\doteu^n)^T \big( \bfA(\wtx^{n-1})-\bfA(\wtxls^{n-1}) \big) \bfu_*^{n-1}.
\end{align*}
These terms will be bounded using a discrete analogue of the arguments in part (iv) of the proof of Proposition~\ref{proposition:stability - coupled problem}. We give the detailed argument for the first term, the second term is then bounded in the same way.

Part (iv) of the proof of Proposition~\ref{proposition:stability - coupled problem} starts from the product rule of differentiation, which is not directly available in the time-discrete case. So we need to come up with a substitute. From the definition \eqref{dotexun} of $\doteu^n$ and the factorization \eqref{delta-factor}  
we obtain the relation
\begin{equation}
\label{eq:connecting discrete derivatives}
	\doteu^n = \frac1\tau\sum_{j=0}^{q} \delta_j \, \eu^{n-j}  = 	\sum_{j=0}^{q-1} \sigma_j \, \frac1\tau\bigl( \eu^{n-j} - \eu^{n-j-1} \bigr) =
	\sum_{j=0}^{q-1} \sigma_j \, \be \eu^{n-j}	
	 .
\end{equation}
%
%
We rewrite the term $R_{\bfA}^{1,n}$ using  \eqref{eq:connecting discrete derivatives}  and a discrete product rule formula, and obtain the following discrete substitute for \eqref{RA1}:
\begin{equation}
\label{eq:discrete product rule formula}
	\begin{aligned}
		R_{\bfA}^{1,n}
		&= - \sum_{j=0}^{q-1} \sigma_j (\be \eu^{n-j})^T \big( \bfA(\wtx^n)-\bfA(\wtxls^n) \big) \bfu_*^n \\
		&= - \sum_{j=0}^{q-1} \sigma_j \be \Big( (\eu^{n-j})^T \big( \bfA(\wtx^n)-\bfA(\wtxls^n) \big) \bfu_*^n \Big) \\
		&\ + \sum_{j=0}^{q-1} \sigma_j  (\eu^{n-j-1})^T \be \big( \bfA(\wtx^n)-\bfA(\wtxls^n) \big) \bfu_*^n \\
		&\ + \sum_{j=0}^{q-1} \sigma_j  (\eu^{n-j-1})^T \big( \bfA(\wtx^{n-1})-\bfA(\wtxls^{n-1}) \big) \be \bfu_*^n .
	\end{aligned}
\end{equation}

We rewrite the summands of the term that contains $ \be \big( \bfA(\wtx^n)-\bfA(\wtxls^n) \big)$. Let us define the intermediate surfaces, for a fixed $n$,  
$$\widetilde\Ga_h^{\xi}=\Ga_h[\xi \wtx^n+(1-\xi) \wtx^{n-1}]
\quad\text{and}\quad \widetilde\Ga_{*,h}^{\xi}=\Ga_h[\xi \wtxls^n+(1-\xi) \wtxls^{n-1}] \quad\text{for $\xi\in[0,1]$}, 
$$
with velocities $V_{\widetilde\Ga_h^{\xi}}$ and $V_{\widetilde\Ga_{*,h}^{\xi}}$ that are independent of $\xi$ and correspond to the nodal vectors $\widetilde \bfV^n = \be\, \wtx^n$ and $\widetilde \bfV_*^n = \be\, \wtxls^n$, respectively. We denote by $e_u^\xi$ and $u_*^\xi$ the finite element functions with nodal vectors $\eu$ and $\bfu^*$, respectively.
We then have,  momentarily dropping the superscripts of $\eu$ and $\bfu_*$,
\begin{align*}
	&\ \eu^T \be \big( \bfA(\wtx^n)-\bfA(\wtxls^n) \big) \bfu_* \\
	&= \frac{1}{\tau} \bigg( \eu^T \big( \bfA(\wtx^n)-\bfA(\wtx^{n-1}) \big) \bfu_* - \eu^T \big( \bfA(\wtxls^n)-\bfA(\wtxls^{n-1}) \big) \bfu_*\bigg) \\
	&= \int_0^1 \int_{\widetilde\Ga_h^{\xi}} \nb_{\widetilde\Ga_{h}^{\xi}} e_u^\xi \cdot \big(D_{\widetilde\Ga_h^{\xi}} V_{\widetilde\Ga_h^{\xi}} \big) \nb_{\widetilde\Ga_{h}^{\xi}} u_*^\xi \d \xi \\
	&\ \qquad - \int_0^1 \int_{\widetilde\Ga_{*,h}^{\xi}} \nb_{\widetilde\Ga_{*,h}^{\xi}}e_u^\xi \cdot \big(D_{\widetilde\Ga_{*,h}^{\xi}} V_{\widetilde\Ga_{*,h}^{\xi}} \big) \nb_{\widetilde\Ga_{*,h}^{\xi}} u_*^\xi \d \xi \\
	&= \int_0^1 \frac{\d}{\d \theta} \int_0^1 \int_{\widetilde\Ga_h^{\xi,\theta}} \nb_{\widetilde\Ga_{h}^{\xi,\theta}} e_u^{\xi,\theta} \cdot \big(D_{\widetilde\Ga_h^{\xi,\theta}} V_{\widetilde\Ga_{h}^{\xi,\theta}}\big) \nb_{\widetilde\Ga_{h}^{\xi,\theta}}  u_*^{\xi,\theta}  \d \xi \d \theta ,
\end{align*}
where $\widetilde\Ga_h^{\xi,\theta} = \Ga_h[\wtxls^\xi + \theta \wtex^\xi]$, with $\wtxls^\xi=\xi \wtxls^n+(1-\xi) \wtxls^{n-1}$ and analogously for~$\wtex^\xi$, \blueon and where $D_{\widetilde\Ga_h^{\xi,\theta}} V_{\widetilde\Ga_{h}^{\xi,\theta}} =  \textnormal{tr}(G_{h}^{\xi,\theta}) I_3 - (G_{h}^{\xi,\theta}+(G_{h}^{\xi,\theta})^T)$ with $G_{h}^{\xi,\theta}=\nabla_{\widetilde\Ga_h^{\xi,\theta}} V_{\widetilde\Ga_{h}^{\xi,\theta}} \in \R^{3\times 3}$. \blueoff Furthermore, we let $E_V^{\xi,\theta} =V_{\widetilde\Ga_{h}^{\xi}} - V_{\widetilde\Ga_{*,h}^{\xi}}$ (which corresponds to the nodal values $\bfE_{\widetilde \bfV}^n = \widetilde \bfV^n - \widetilde \bfV_*^n$) and $V_{\widetilde\Ga_{h}^{\xi,\theta}} = V_{\widetilde\Ga_{*,h}^{\xi}} + \theta E_V^{\xi,\theta}$, and we have the relations $0 = \mat_\theta e_u^{\xi,\theta} = \mat_\theta u_*^{\xi,\theta}$ (since both functions are independent of $\theta$) and $\mat_\theta V_{\widetilde\Ga_{h}^{\xi,\theta}} \blueon = \mat_\theta (V_{\widetilde\Ga_{*,h}^{\xi}} + \theta E_V^{\xi,\theta}) \blueoff = E_V^{\xi,\theta}$. After computing the derivative with respect to $\theta$, and using the formula \eqref{eq:mat-grad formula}, we obtain 
\begin{align*}
	&\ \eu^T \be \big( \bfA(\wtx^n)-\bfA(\wtxls^n) \big) \bfu_* \\
	%
	%
	&= 
	\int_0^1 \int_0^1 \int_{\widetilde\Ga_h^{\xi,\theta}} \nb_{\widetilde\Ga_h^{\xi,\theta}} e_u^{\xi,\theta} \cdot \mat_\theta (D_{\widetilde\Ga_h^{\xi,\theta}} V_{\widetilde\Ga_{h}^{\xi,\theta}}) \nb_{\widetilde\Ga_h^{\xi,\theta}} u_*^{\xi,\theta}  \d \xi \d \theta \\
	&\ + \int_0^1 \int_0^1 \int_{\widetilde\Ga_h^{\xi,\theta}} \nb_{\widetilde\Ga_h^{\xi,\theta}} e_u^{\xi,\theta} \cdot (D_{\widetilde\Ga_h^{\xi,\theta}} e_{\widetilde x}^{\xi,\theta}) (D_{\widetilde\Ga_h^{\xi,\theta}} V_{\widetilde\Ga_{h}^{\xi,\theta}}) \nb_{\widetilde\Ga_h^{\xi,\theta}} u_*^{\xi,\theta}   \d \xi \d \theta .
\end{align*} 
\bbk %
%
%
%
Using the formula \eqref{eq:mat-D formula}, and that the nodal values of $E_V^{\xi,\theta}$ are $\bfE_{\widetilde \bfV}^n = \widetilde \bfV^n - \widetilde \bfV_*^n = \be\, (\wtx^n - \wtxls^n) = \be\, \wtex^n$,  and $\wtex^{n} = \sum_{j=0}^{q-1} \gamma_j \ex^{n-1-j}$, we obtain from the lemmas and bounds of Section~\ref{section: aux} 
\begin{equation}
\label{eq:R - middle term estimate}
	\begin{aligned}
		&\ (\eu^n)^T \be \big( \bfA(\wtx^n)-\bfA(\wtxls^n) \big) \bfu_*^n \\
		&\leq c \|u_*^n\|_{W^{1,\infty}(\Ga_h[\xls^n])} \|\eu^n\|_{\bfA(\wtx^n)} \big(\|\be \wtex^n\|_{\bfK(\wtx^n)} \! + \! \|\wtex^n\|_{\bfK(\wtx^n)} \! + \! \|\wtex^{n-1}\|_{\bfK(\wtx^{n-1})}\big) \\
		&\leq c \|\eu^n\|_{\bfA(\wtx^n)} \big(\|\be \wtex^n\|_{\bfK(\wtx^n)} \! + \! \|\wtex^n\|_{\bfK(\wtx^n)} \! + \! \|\wtex^{n-1}\|_{\bfK(\wtx^{n-1})}\big) \\
		&\leq c \|\eu^n\|_{\bfA(\wtx^n)} \Big( \sum_{j=n-q}^{n-1} \| \be \ex^j\|_{\bfK(\wtx^j)} + \sum_{j=n-q-1}^{n-1} \|\ex^j\|_{\bfK(\wtx^j)} \Big) \\
		&\leq c \|\eu^n\|_{\bfA(\wtx^n)}^2 + c \Big( \sum_{j=n-q}^{n-1} \| \be \ex^j\|_{\bfK(\wtx^j)}^2 + \sum_{j=n-q-1}^{n-1} \|\ex^j\|_{\bfK(\wtx^j)}^2 \Big) ,
	\end{aligned}
\end{equation}
\blueon 
where the term $D_{\widetilde\Ga_h^{\xi,\theta}} V_{\widetilde\Ga_{h}^{\xi,\theta}}$ is bounded using \eqref{bar-vhn} and the $W^{1,\infty}$-boundedness of the exact solution. 
\blueoff 

The last term in \eqref{eq:discrete product rule formula} is bounded, like the corresponding term in the proof of Proposition~\ref{proposition:stability - coupled problem}, by
\begin{equation}
\label{eq:R - third term estimate}
	\begin{aligned}
		&\ \sum_{j=0}^{q-1} \sigma_j  (\eu^{n-j-1})^T \big( \bfA(\wtx^{n-1})-\bfA(\wtxls^{n-1}) \big) \be \bfu_*^n \\
		&\leq c \sum_{j=0}^{q-1} \|\eu^{n-j-1}\|_{\bfA(\wtx^{n-1})} \|\wtex^{n-1}\|_{\bfK(\wtx^{n-1})} \\
		&\leq c \sum_{j=n-q}^{n-1} \|\eu^{j}\|_{\bfA(\wtx^{j})}^2 + c \sum_{j=n-q}^{n-1} \|\ex^{j}\|_{\bfK(\wtx^{j})}^2 .
	\end{aligned}
\end{equation}
In summary, we have shown that
\begin{equation}\label{RA1n}
	R_{\bfA}^{1,n}
		=  - \sum_{j=0}^{q-1} \sigma_j \be \Big( (\eu^{n-j})^T \big( \bfA(\wtx^n)-\bfA(\wtxls^n) \big) \bfu_*^n \Big) + r_{\bfA}^{1,n} ,
\end{equation}
where
\begin{equation}\label{rA1n}
r_{\bfA}^{1,n}  \leq  c  \sum_{j=n-q}^{n} \|\eu^{j}\|_{\bfA(\wtx^{j})}^2  + c \Big( \sum_{j=n-q}^{n-1} \| \be \ex^j\|_{\bfK(\wtx^j)}^2 + \sum_{j=n-q-1}^{n-1} \|\ex^j\|_{\bfK(\wtx^j)}^2 \Big) .
\end{equation}
In the same way we obtain
\begin{equation}\label{RA2n}
	R_{\bfA}^{2,n}
		=  \eta \sum_{j=0}^{q-1} \sigma_j \be \Big( (\eu^{n-j})^T \big( \bfA(\wtx^{n-1})-\bfA(\wtxls^{n-1}) \big) \bfu_*^{n-1} \Big) + \,r_{\bfA}^{2,n} 
\end{equation}
with
\begin{equation}\label{rA2n}
r_{\bfA}^{2,n}  \leq  \eta c  \sum_{j=n-q}^{n} \|\eu^{j}\|_{\bfA(\wtx^{j})}^2 + \eta c \Big( \sum_{j=n-q-1}^{n-2} \| \be \ex^j\|_{\bfK(\wtx^j)}^2 + \sum_{j=n-q-2}^{n-2} \|\ex^j\|_{\bfK(\wtx^j)}^2 \Big) .
\end{equation}
 
(v) The terms with the nonlinearity are bounded, with a small $\rho > 0$, by
\begin{equation}
\label{eq:rhs estimate - nonlinearities}
	\begin{aligned}
		& (\doteu^n)^T \big(\bff(\wtx^n,\widetilde \bfu^n) - \bff(\wtxls^n,\widetilde \bfu_*^n)\big)  
		- \eta (\doteu^n)^T \big(\bff(\wtx^{n-1},\widetilde \bfu^{n-1}) - \bff(\wtxls^{n-1},\widetilde \bfu_*^{n-1})\big) \\
		&\leq  c \|\doteu^n\|_{\bfM(\wtx^n)} \big( \|\wteu^n\|_{\bfK(\wtx^n)} + \|\wtex^n\|_{\bfK(\wtx^n)} \big) \\
		&\quad + c \eta \|\doteu^n\|_{\bfM(\wtx^{n-1})} \big( \|\wteu^{n-1}\|_{\bfK(\wtx^{n-1})} + \|\wtex^{n-1}\|_{\bfK(\wtx^{n-1})} \big) \\
		&\leq  \rho \|\doteu^n\|_{\bfM(\wtx^n)}^2 + c \|\wteu^n\|_{\bfK(\wtx^n)}^2 + c \|\wteu^{n-1}\|_{\bfK(\wtx^{n-1})}^2 \\
		&\quad + c \|\wtex^n\|_{\bfK(\wtx^n)}^2 + c\|\wtex^{n-1}\|_{\bfK(\wtx^{n-1})}^2 .
	\end{aligned}
\end{equation}

(vi) The terms with the defects are bounded, with a small $\rho > 0$, by
\redon 
\begin{equation}
\label{eq:rhs estimate - defects}
	\begin{aligned}
		& - (\doteu^n)^T \bfM(\wtxls^n) \du^n 
		+ \eta (\doteu^n)^T \bfM(\wtxls^{n-1}) \du^{n-1} \\
		&\leq  c \|\doteu^n\|_{\bfM(\wtx^n)} \|\du^n\|_{\bfM(\wtxls^n)}
		+ c \eta \|\doteu^n\|_{\bfM(\wtx^{n-1})} \|\du^{n-1}\|_{\bfM(\wtxls^{n-1})} \\
		&\leq  \rho \|\doteu^n\|_{\bfM(\wtx^n)}^2 + c \|\du^n\|_{\bfM(\wtxls^n) }^2 + c \|\du^{n-1}\|_{\bfM(\wtxls^{n-1} )}^2 ,
	\end{aligned}
\end{equation}
where we used the norm equivalences \eqref{norm-equiv} between $\wtx^n$ and $\wtxls^{n}$ (for $n-1$ as well), and $\wtxls^n$ and $\wtxls^{n-1}$, (for the former, using the bounds \eqref{eq:assumed bounds - n}). \redoff

%

 Inserting the estimates from (i)--(vi) into  \eqref{eq:error eq tested - u - full}
altogether yields the formi\-dable inequality, for $n\ge q+1$ with $n\tau\le t^*$,
\begin{equation}
\label{eq:stability estimate - pre sum}
	\begin{aligned}
		& \big( (1-c\tau) - \tfrac12\eta(1+ c\tau) - c\rho \big) \|\doteu^n\|_{\bfM(\wtx^{n})}^2 \! - \! \tfrac12\eta  \|\doteu^{n-1}\|_{\bfM(\wtx^{n-1})}^2 \\
		&\quad + \frac{1}{\tau} \! \!  \sum_{i,j=1}^q g_{ij} (\eu^{n-q+i})^T \bfA(\wtx^n) \eu^{n-q+j} 
		- \frac{1}{\tau} \! \! \sum_{i,j=1}^q g_{ij} (\eu^{n-q+i-1})^T \bfA(\wtx^n) \eu^{n-q+j-1} \\
		&\leq 
		- \sum_{j=0}^{q-1} \sigma_j \be \Big( (\eu^{n-j})^T \big( \bfA(\wtx^n)-\bfA(\wtxls^n) \big) \bfu_*^n \Big)
		\\
		&\quad	+ \eta \sum_{j=0}^{q-1} \sigma_j \be \Big( (\eu^{n-j})^T \big( \bfA(\wtx^{n-1})-\bfA(\wtxls^{n-1}) \big) \bfu_*^{n-1} \Big)
		+ c\eps_n
	\end{aligned}
\end{equation}
with 
\begin{equation} \label{eps-n}
  \begin{aligned}
  \eps_n &=  \sum_{j=0}^{q} \|\eu^{n-j}\|_{\bfA(\wtx^{n-j})}^2  + \sum_{j=0}^{q} \|\ex^{n-1-j}\|_{\bfK(\wtx^{n-1-j})}^2 
   +   \sum_{j=0}^{q-1} \| \be \ex^{n-1-j}\|_{\bfK(\wtx^{n-1-j})}^2
		\\ &\quad 
		+   \redon \|\du^n\|_{\bfM(\wtxls^n)}^2 \redoff + \redon \|\du^{n-1}\|_{\bfM(\wtxls^{n-1})}^2 \redoff .
  \end{aligned}
\end{equation}

%
%

We now sum up the inequality \eqref{eq:stability estimate - pre sum} from $n=q+1$ to $\bar n$ with $\bar n \tau \leq t^*$ and multiply with the step size $\tau$. We estimate the arising terms separately.

(a) If both $\tau$ and $\rho$ are so small that $(1-c\tau)-\eta(1+c\tau) - c\rho \geq (1-\eta)/2$ (recall that $\eta<1$), then
\begin{align*}
&  \tau \!\!\! \sum_{n=q+1}^{\bar n} \bigg( \big( (1-c\tau) - \tfrac12\eta(1+ c\tau) - c\rho \big) \|\doteu^n\|_{\bfM(\wtx^{n})}^2 \! - \! \tfrac12\eta  \|\doteu^{n-1}\|_{\bfM(\wtx^{n-1})}^2 \bigg) \\
&\geq  \tfrac{1}{2}(1-\eta) \tau \! \sum_{n=q+1}^{\bar n} \! \|\doteu^n\|_{\bfM(\wtx^{n})}^2 \blueon -  \tfrac12\eta  \tau\|\doteu^{q}\|_{\bfM(\wtx^{q})}^2 \blueoff .
\end{align*}

(b) For the second line in \eqref{eq:stability estimate - pre sum} we have, with $\gamma>0$ denoting the smallest eigenvalue of the positive definite matrix $(g_{ij})$, and using \eqref{eq:assumed bounds - n} with the bound~\eqref{eq:matrix difference bounds},
\begin{align*}
& \tau \!\!\! \sum_{n=q+1}^{\bar n} \Bigl( \frac{1}{\tau} \! \!  \sum_{i,j=1}^q g_{ij} (\eu^{n-q+i})^T \bfA(\wtx^n) \eu^{n-q+j} 
\\
&\qquad\quad		- \frac{1}{\tau} \! \! \sum_{i,j=1}^q g_{ij} (\eu^{n-q+i-1})^T \bfA(\wtx^n) \eu^{n-q+j-1}  \Bigr)
\\
		&= \sum_{i,j=1}^q g_{ij} (\eu^{\bar n-q+i})^T \bfA(\wtx^{\bar n}) \eu^{\bar n-q+j} -
		\sum_{i,j=1}^q g_{ij} (\eu^{i})^T \bfA(\wtx^{q+1}) \eu^{j} 
		\\
		&\quad - \sum_{n=q+1}^{\bar n-1} \sum_{i,j=1}^q g_{ij} (\eu^{n-q+i})^T \bigl(\bfA(\wtx^{n+1}) - \bfA(\wtx^{n}) \bigr) \eu^{n-q+j}
\\
& \ge \gamma \sum_{j=0}^{q-1} \|\eu^{\bar n - j}\|_{\bfA(\wtx^{\bar n})}^2 - c \sum_{i=1}^{q} \|\eu^i\|_{\bfA(\wtx^{q+1})}^2 
- c \tau \sum_{n=q+1}^{\bar n-1} \|\eu^n\|_{\bfA(\wtx^n)}^2
\\
&\ge \tfrac12 \gamma \sum_{j=0}^{q-1} \|\eu^{\bar n - j}\|_{\bfA(\wtx^{\bar n -j})}^2 - c \sum_{i=1}^{q} \|\eu^i\|_{\bfA(\wtx^{i})}^2 
- c \tau \sum_{n=q+1}^{\bar n-1} \|\eu^n\|_{\bfA(\wtx^n)}^2.
\end{align*}

(c) For the first term on the right-hand side of \eqref{eq:stability estimate - pre sum} we have, after exchanging the sums,  a telescoping sum
\begin{align*}
&-  \tau \!\!\! \sum_{n=q+1}^{\bar n}\sum_{j=0}^{q-1} \sigma_j \be \Big( (\eu^{n-j})^T \big( \bfA(\wtx^n)-\bfA(\wtxls^n) \big) \bfu_*^n \Big)
\\
&= - \sum_{j=0}^{q-1} \sigma_j \Bigl( (\eu^{{\bar n}-j})^T \big( \bfA(\wtx^{\bar n})-\bfA(\wtxls^{\bar n}) \big) \bfu_*^{\bar n} 
- (\eu^{q-j})^T \big( \bfA(\wtx^{q})-\bfA(\wtxls^{q}) \big) \bfu_*^{q} \Bigr)
\\
&\le c \sum_{j=0}^{q-1} \Bigl( \| \eu^{{\bar n}-j} \|_{\bfA(\wtx^{\bar n})}   \| \wtex^{{\bar n}} \|_{\bfA(\wtx^{\bar n})}
+   \| \eu^{{q}-j} \|_{\bfA(\wtx^{q})}   \| \wtex^{{q}} \|_{\bfA(\wtx^{q})} \Bigr)
\\
& \blueon \leq c \sum_{j=0}^{q-1} \| \eu^{{\bar n}-j} \|_{\bfA(\wtx^{\bar n})} \sum_{j=0}^{q-1} \| \ex^{{\bar n-1-j}} \|_{\bfA(\wtx^{\bar n-1-j})}
+ c \sum_{j=0}^{q-1} \| \eu^{{q}-j} \|_{\bfA(\wtx^{q})}  \| \wtex^{{q}} \|_{\bfA(\wtx^{q})} \blueoff \\
&\le \tfrac14\gamma \sum_{j=0}^{q-1}  \|\eu^{\bar n - j}\|_{\bfA(\wtx^{\bar n - j})}^2 + 
c\sum_{j=0}^{q-1}  \| \ex^{\bar n - 1- j} \|_{\bfA(\wtx^{\bar n - 1- j})}^2 
\\
&\quad +
c \sum_{i=0}^{q-1} \Bigl( \|\eu^{i}\|_{\bfA(\bfx^{i})}^2 +  \|\ex^{i}\|_{\bfA(\bfx^{i})}^2 \Bigr) +
c  \| \eu^{{q}} \|_{\bfA(\wtx^{q})}^2 ,
\end{align*}
\blueon 
where for the second to last estimate we used that $\wtex^{n} = \sum_{j=0}^{q-1} \gamma_j \ex^{n-1-j}$ and the norm equivalence \eqref{norm-equiv-n}.
\blueoff 

A bound of the same type holds for the second term on the right-hand side.

(d)  To  bound $\eu^q$, we test \eqref{eq:error eq - u - full} for $n=q$ with $\doteu^q$, multiply with $\tau$, and use  that $\delta_0>0$ and
the techniques presented above in (iii)--(vi) to conclude  that
$$
\tau \|\doteu^q\|_{\bfM(\wtx^q)}^2  + \delta_0	\|\eu^q\|_{\bfA(\wtx^q)}^2 
	\leq c \sum_{i=0}^{q-1}\Bigl( \|\eu^i\|_{\bfA(\bfx^{i})}^2 + \|\ex^i\|_{\bfK(\bfx^{i})}^2 \Bigr)+ c \tau \|\du^q\|_{\redon \bfM(\wtxls^q)\redoff }^2 .
$$

Combining the estimates of (a)--(d), we thus obtain
\begin{equation}
\label{eq:stability estimate-abc}
	\begin{aligned}
		& \tfrac{1}{2}(1-\eta) \tau \! \sum_{n=q+1}^{\bar n} \! \|\doteu^n\|_{\bfM(\wtx^{n})}^2 
		+ \tfrac14 \gamma \sum_{j=0}^{q-1} \|\eu^{\bar n - j}\|_{\bfA(\wtx^{\bar n - j})}^2 \\
                 &\le c\sum_{j=0}^{q-1}  \| \ex^{\bar n - 1- j} \|_{\bfA(\wtx^{\bar n - 1- j})}^2 
		+ c\tau\sum_{n=q+1}^{\bar n} \eps_n
		\\
&\quad +
c \sum_{i=0}^{q-1} \Bigl( \|\eu^{i}\|_{\bfA(\bfx^{i})}^2 +  \|\ex^{i}\|_{\bfA(\bfx^{i})}^2 \Bigr) +
c \tau \|\du^q\|_{\redon \bfM(\wtxls^q)\redoff }^2 .
	\end{aligned}
\end{equation}
The term $\tau\sum_{n=q+1}^{\bar n} \eps_n$, which was defined in \eqref{eps-n}, contains a sum of squared norms of $\partial^\tau \ex^n = (\ex^n-\ex^{n-1})/\tau$, for which we show 
\begin{equation}\label{sum-eps-n}
		\tau \sum_{n=q}^{\bar n} \| \be \ex^n\|_{\bfK(\wtx^n)}^2 \le c\tau \sum_{n=q}^{\bar n} \|  \dotex^n\|_{\bfK(\wtx^n)}^2 
		+ \redon \frac c \tau \redoff \sum_{i=0}^{q-1} \| \ex^i \|_{\bfK(\bfx^{i})}^2.
		\end{equation}
To prove this inequality, we start from the equation (cf.~\eqref{eq:connecting discrete derivatives})
$$
	\dotex^n = \frac1\tau\sum_{j=0}^{q} \delta_j \, \ex^{n-j}  = 	\sum_{j=0}^{q-1} \sigma_j \, \frac1\tau\bigl( \ex^{n-j} - \ex^{n-j-1} \bigr) =
	\sum_{j=0}^{q-1} \sigma_j \, \be \ex^{n-j}	,
$$
which we rewrite, for $n\ge q$, as
$$
	\dotex^n = 
	\sum_{j=q}^n \sigma_{n-j} \be \ex^{j} + \bfs^n,
$$
with $\sigma_j=0$ for $j\ge q$ and $\bfs^n = \sum_{i=1}^{q-1} \sigma_{n-i} \be \ex^i$, for which we note that $\bfs^n=0$ for $n\ge 2q-1$.
We then obtain, as in \eqref{inverse convolution} with \eqref{eq:inverse of sigma},
$$
\be \ex^{n} = \sum_{j=q}^n \chi_{n-j} \bigl(  \dotex^j - \bfs^j).
$$
Since the convolution with the absolutely summable sequence $(\chi_j)$ is a bounded operation in $\ell^2$ and we have the norm equivalence~\eqref{norm-equiv-n} (with $m=\bar n$), we obtain \blueon 
\begin{equation*}
	\tau \sum_{n=q}^{\bar n} \| \be \ex^n\|_{\bfK(\wtx^n)}^2 \leq c\tau \sum_{n=q}^{\bar n} \|  \dotex^n\|_{\bfK(\wtx^n)}^2 + c \tau \sum_{n=q}^{2q-2} \| \bfs^n \|_{\bfK(\bfx^{i})}^2 , 
\end{equation*}
\redon 
where the last term is further bounded by
$$
\tau \sum_{n=q}^{2q-2} \| \bfs^n \|_{\bfK(\bfx^{i})}^2  \leq c \tau \sum_{i=1}^{q-1} \| \be \ex^i \|_{\bfK(\bfx^{i})}^2 \leq
\frac{c}{\tau} \sum_{i=0}^{q-1} \| \ex^i \|_{\bfK(\bfx^{i})}^2.
$$
 This yields \blueoff \eqref{sum-eps-n}.
 \redoff

For the right-hand side of \eqref{sum-eps-n} we note that by \eqref{eq:error eq - x - full},
\begin{equation} \label{dotex-ev}
\tau \sum_{n=q}^{\bar n} \|  \dotex^n\|_{\bfK(\wtx^n)}^2 \le 2 \tau \sum_{n=q}^{\bar n} \|  \ev^n\|_{\bfK(\wtx^n)}^2  +
2 \tau \sum_{n=q}^{\bar n} \|\dx^n\|_{\redon \bfK(\wtxls^n)\redoff }^2 .
\end{equation}

Next we show that for $\bar n\ge q$ with $\bar n\tau\le T$,
\begin{equation}
\label{eu-doteu}
	\|\eu^{\bar n}\|_{\bfM(\xls^{\bar n})}^2 \leq  c\tau \sum_{n=q}^{\bar n}  \|\doteu^n\|_{\bfM(\xls^{n})}^2 +  c \sum_{i=0}^{q-1} \|\eu^i\|_{\bfM(\xls^{i})}^2 .
\end{equation}
The proof is similar to that of \eqref{sum-eps-n}.
We rewrite
$$
\doteu^n = \frac{1}{\tau} \sum_{j=q}^n \delta_{n-j} \eu^{j}  + \bfs^n \qquad\text{with}\qquad \bfs^n =  \frac{1}{\tau} \sum_{i=0}^{q-1} \delta_{n-i} \eu^{i}
$$
and $\delta_j=0$ for $j>q$, and we
note that $\bfs^n = 0$ for $n\ge2q$. 
With the coefficients of the power series
$$
	\omega(\zeta) = \sum_{n=0}^\infty \omega_n \zeta^n = \frac 1{\delta(\zeta)}, \quad\text{ for }\quad \delta(\zeta)=\sum_{j=0}^q \delta_j\zeta^j ,
$$
the same argument as in \eqref{inverse convolution} gives us, for $n\ge q$,
$$
	\eu^n =\tau \sum_{j=q}^n \omega_{n-j}  (\doteu^j-\bfs^j) .
$$
%
By the zero-stability of the BDF method of order $q\le 6$ (which states that all zeros of $\delta(\zeta)$ are outside the unit circle with the exception of the simple zero at $\zeta=1$), the coefficients $\omega_n$ are bounded: $|\omega_n| \le c$ for all $n$.
We take the $\mathbf{M}(\wtx^n)$ norm on both sides and recall that by \eqref{norm-equiv-n} all these norms are uniformly equivalent for $0\le n\tau\le t^*$. With the Cauchy--Schwarz inequality we then obtain \eqref{eu-doteu}.

Recalling the definition \eqref{eps-n} of $\eps_n$ and combining the bounds \eqref{eq:stability estimate-abc}--\eqref{eu-doteu}, we finally obtain (writing here $n$ instead of $\bar n$) for $n\ge q$ with $n\tau\le t^*$
\begin{equation}
\label{eq:stability estimate-eu}
	\begin{aligned}
		  \|\eu^{ n }\|_{\bfK(\wtx^{ n })}^2 
                 &\le c\sum_{i=0}^{q-1}  \| \ex^{  n - 1- j} \|_{\bfK(\wtx^{  n - 1- j})}^2 
  \\
  &\quad +  c\tau\sum_{j=q}^{  n} \Bigl( \|\ex^j\|_{\bfK(\wtx^{j})}^2 + \|\ev^j\|_{\bfK(\wtx^{j})}^2 + \|\eu^j\|_{\bfK(\wtx^{j})}^2 \Bigr)
		\\
&\quad +
c \sum_{i=0}^{q-1} \Bigl( \|\eu^{i}\|_{\bfK(\bfx^{i})}^2 +  \|\ex^{i}\|_{\bfK(\bfx^{i})}^2 \Bigr) 
\\
&\quad +
c \tau \sum_{j=q}^{n} \Bigl( \|\dx^j\|_{\redon \bfK(\wtxls^j)\redoff }^2 + \|\du^j\|_{\redon \bfM(\wtxls^{j})\redoff }^2 \Bigr).
	\end{aligned}
\end{equation}
%
%

From \eqref{eq:error eq - x - full} we obtain by the same argument as in the proof of \eqref{eu-doteu} a bound for $\ex^n$,
\begin{equation}
\label{ex-ev}
 \|\ex^{n}\|_{\bfK(\wtx^{n})}^2 \le  c\tau \sum_{j=q}^{n}  \|\ev^j\|_{\bfK(\wtx^{j})}^2 +  c \sum_{i=0}^{q-1} \|\ex^i\|_{\bfK(\bfx^{i})}^2 
 + c\tau \sum_{j=q}^{n} \|\dx^j\|_{\redon \bfK(\wtxls^{j})\redoff }^2 .
\end{equation}

(B) \emph{Estimates for the velocity equation:} We test \eqref{eq:error eq - v - full} with $\ev^n$, which yields
$$
\|\ev^n\|_{\bfK(\wtx^n)}^2 
		= (\ev^n)^T \bfK(\wtxls^n) \ev^n = (\ev^n)^T \rv^n,
$$
where the right-hand side is estimated in the same way as in part (B) of the proof of Proposition~\ref{proposition:stability - coupled problem} to obtain a bound like \eqref{eq:final estimate surface} for the discrete error in~$v$:
$$
	\|\ev^n\|_{\bfK( \wtx^n)}^2  \leq 
	c  \|\wtex^{n}\|_{\bfK( \wtx^n)}^2 + c  \|\wteu^{n}\|_{\bfK( \wtx^n)}^2 + c \|\dv^n\|_{\star, \redon \wtxls^n\redoff }^2 ,
$$
where $\wtex^{n} = \widetilde \bfx^n - \wtxls^n = \sum_{i=0}^{q-1} \gamma_i \ex^{n-1-i}$ and 
$\wteu^{n} = \widetilde \bfu^n - \wtuls^n = \sum_{i=0}^{q-1} \gamma_i \eu^{n-1-i}$.
This yields
\begin{equation}
\label{eq:final estimate surface - full}
	\|\ev^n\|_{\bfK( \wtx^n)}^2  \leq 
	c \sum_{i=0}^{q-1}\Big( \|\ex^{n-1-i}\|_{\bfK( \wtx^{n-1-i})}^2 +  \|\eu^{n-1-i}\|_{\bfK( \wtx^{n-1-i})}^2\Big) + c \|\dv^n\|_{\star, \redon \wtxls^n\redoff }^2 .
\end{equation}

(C) \emph{Combination:} Combining the bounds \eqref{eq:stability estimate-eu}--\eqref{eq:final estimate surface - full} and using a discrete Gronwall inequality finally yields the stability estimate \eqref{eq:stability bound - full} in the norms that correspond to $\wtx^n$ instead of $\xls^n$. However, these norms are equivalent uniformly in $h$, $\tau$ and $n$ with $n\tau\le t^*$ by \eqref{norm-equiv}, since the required smallness condition in the $W^{1,\infty}$ norm is satisfied because of the obtained error bound for $e_x$ in the $H^1$-norm and an inverse inequality, noting that
$$
\wtx^n - \xls^n = \sum_{j=0}^{q-1} \gamma_j \ex^{n-1-j}  +( \wtxls^n -  \xls^n ),
$$
where the last term is bounded by $c\tau^q$ in the $L^\infty$ norm.

Finally, similarly as before, we also obtain $t^*=T$, since our error estimates show that the required bounds \eqref{eq:assumed bounds - n} remain satisfied from one step to the next provided that $\tau$ and $h$ are sufficiently small and are related by $\tau\le C_0h$.
\qed
\end{proof}

\ebk

\section{Defect bounds for the full discretization}
\label{section:defect-full}
The consistency errors defined by \eqref{BDF-defects} are bounded as follows.
\begin{lemma}
\label{lemma: semidiscrete residual - full}
Let the  surface $X$ evolving under mean curvature flow be sufficiently regular on the time interval $[0,T]$. Then, there exist $h_0>0$ and $\tau_0>0$ such that for all $h\leq h_0$, $\tau\le\tau_0$ and $n\geq$ with $n\tau\le T$, the  defects 
$d_x^n\in S_h(\Gamma_h[\bfx^*])^3$, $d_v^n\in S_h(\Gamma_h[\bfx^*])^3$ and $d_u^n\in S_h(\Gamma_h[\bfx^*])^4$ of the $k$th-degree finite elements and the $q$-step backward difference formula,  as defined by their nodal vectors $\dx^n$, $\dv^n$ and $\du^n$ in \eqref{BDF-defects}, are bounded as
    \begin{align*}
        \|\dx^n\|_{\bfK(\xls^n)} =&\ \|d_x^n\|_{H^1(\Ga_h[\xs(t_n)])}\leq\ c\tau^q, \\
        \|\dv^n\|_{\star,\xls^n}=&\ \|d_v^n\|_{H_h\inv(\Ga_h[\xs(t_n)])} \leq c (h^k+\tau^q) , \\
        \|\du^n\|_{\bfM(\xls^n)}=&\ \|d_u^n\|_{L^2(\Ga_h[\xs(t_n)])} \leq c (h^k +\tau^q) .
    \end{align*}
 The constant $c$ is independent of $h$, $\tau$ and $n$ with $n\tau\le T$.
\end{lemma}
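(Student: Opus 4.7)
The plan is to split each of the three defects into a \emph{semi-discrete part} (which is already controlled by Lemma~\ref{lemma: semidiscrete residual}) and a \emph{time-discretization part} arising from the BDF differentiation together with the $q$-step extrapolation. Throughout, we exploit that the exact solution is sufficiently smooth so that the nodal vectors $\bfx^*(t)$, $\bfv^*(t)$, $\bfu^*(t)$ depend smoothly on $t$ with $h$-uniform bounds on their time derivatives in the relevant finite element norms.

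\textbf{The defect $\dx^n$.} Since $\dot \bfx^*(t) = \bfv^*(t)$ by \eqref{matrix-form-X-star}, we have
\begin{equation*}
	\dx^n = \dot\bfx_*^n - \bfv_*^n = \frac{1}{\tau} \sum_{j=0}^q \delta_j \bfx^*(t_{n-j}) - \dot\bfx^*(t_n),
\end{equation*}
which is precisely the classical truncation error of the BDF-$q$ formula applied to the smooth curve $t \mapsto \bfx^*(t)$. A Taylor expansion and the fact that $\delta(\zeta) = (1-\zeta) + O((1-\zeta)^{q+1})$ near $\zeta = 1$ yield the componentwise bound $O(\tau^q)$, and passing to the corresponding finite element function on $\Gamma_h[\xls^n]$ gives $\| d_x^n \|_{H^1(\Gamma_h[\xls^n])} \le c\tau^q$.

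\textbf{The defects $\dv^n$ and $\du^n$.} For these I would algebraically compare the full-discrete reference equations \eqref{BDF-defects} with the semi-discrete reference equations \eqref{defect vectors} evaluated at $t=t_n$. For the velocity defect, writing
\begin{align*}
	\bfM(\wtxls^n)\dv^n =&\ \big[\bfK(\wtxls^n) - \bfK(\xls^n)\big]\bfv_*^n + \bfK(\xls^n)\bfv_*^n - \bfg(\xls^n,\uls^n) \\
	&\ + \big[\bfg(\xls^n,\uls^n) - \bfg(\wtxls^n,\wtuls^n)\big] ,
\end{align*}
the middle pair equals $\bfM(\xls^n) d_v(t_n)$ from the semi-discretization, which is $O(h^k)$ in the $\|\cdot\|_{\star,\xls^n}$ norm by Lemma~\ref{lemma: semidiscrete residual}. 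The first bracket is handled by the matrix-difference bound \eqref{eq:matrix difference bounds} with $\bfx - \bfy = \wtxls^n - \xls^n$, and the last bracket is bounded by the local Lipschitz continuity of $\bfg$ in the appropriate discrete norms (using boundedness of the exact geometric quantities). Since the extrapolation operator satisfies $\wtxls^n - \xls^n = O(\tau^q)$ and $\wtuls^n - \uls^n = O(\tau^q)$ in the $H^1$ finite element norm (as $\gamma(\zeta) = 1/\zeta - (1-\zeta)^{q-1} + \cdots$ reproduces polynomials up to degree $q-1$), both remaining brackets contribute $O(\tau^q)$. The analogous decomposition for $\du^n$ gives
\begin{align*}
	\bfM(\wtxls^n) \du^n =&\ \bfM(\wtxls^n) \dot\bfu_*^n + \bfA(\wtxls^n)\uls^n - \bff(\wtxls^n,\wtuls^n) ,
\end{align*}
which I would further reorganise as the semi-discrete defect $\bfM(\xls^n) d_u(t_n)$ evaluated at $t=t_n$ (giving $O(h^k)$ in $L^2$), plus the BDF truncation error $\bfM(\wtxls^n)(\dot\bfu_*^n - \dot{\bfu^*}(t_n))$ (giving $O(\tau^q)$ via Taylor expansion as for $\dx^n$), plus matrix-difference terms $[\bfM(\wtxls^n) - \bfM(\xls^n)]\dot{\bfu^*}(t_n)$ and $[\bfA(\wtxls^n) - \bfA(\xls^n)]\uls^n$, plus the Lipschitz term $\bff(\xls^n,\uls^n) - \bff(\wtxls^n,\wtuls^n)$. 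The latter three are each $O(\tau^q)$ by the same mechanism as for $\dv^n$.

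\textbf{Main obstacle.} The delicate point is that $\dv^n$ must be estimated in the \emph{dual} $\|\cdot\|_{\star,\xls^n}$ norm rather than in $L^2$, because the right-hand side of the velocity equation contains a gradient term $\nabla_{\Gamma}(H\nu)$. This requires handling the gradient part of the Ritz-type decomposition carefully: I would mimic the dual-norm estimate from the proof of Lemma~\ref{lemma: semidiscrete residual} (testing against an arbitrary $\varphi_h \in S_h[\xls^n]^3$ and using the Ritz equation \eqref{uhs-ritz} to absorb the gradient term), and then apply the Cauchy--Schwarz inequality only after the cancellation. The remaining geometric estimates (comparing surface integrals on $\Gamma_h[\wtxls^n]$ with those on $\Gamma_h[\xls^n]$) are covered by the same tools of Section~\ref{section: aux}, only now the ``evolution parameter'' is the extrapolation variation rather than time.
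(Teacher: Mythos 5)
Your proposal takes essentially the same route as the paper: you split $\dv^n$ and $\du^n$ into the semi-discrete defect at $t_n$ (controlled by Lemma~\ref{lemma: semidiscrete residual} to give $O(h^k)$), the BDF truncation error $\dot\bfu_*^n - \dot\bfu^*(t_n)$ (Taylor/Peano $O(\tau^q)$), the matrix-difference terms $\bfM(\wtxls^n)-\bfM(\xls^n)$, $\bfA(\wtxls^n)-\bfA(\xls^n)$, $\bfK(\wtxls^n)-\bfK(\xls^n)$ handled with the bounds from Section~\ref{section: aux} together with the extrapolation estimate $\|\widetilde\bfx_*^n-\bfx_*^n\|_{\bfK(\xls^n)}\le c\tau^q$, and the locally Lipschitz nonlinearities $\bfg,\bff$ between the extrapolated and exact arguments — and your algebraic decompositions coincide with those in the paper's proof. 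One small inaccuracy in your $\dx^n$ argument: the BDF order condition is $\delta(\zeta)=-\log\zeta+O((1-\zeta)^{q+1})$, not $\delta(\zeta)=(1-\zeta)+O((1-\zeta)^{q+1})$ (the latter is false since $\delta$ genuinely contains $(1-\zeta)^2,\dots,(1-\zeta)^q$), but this does not affect the $O(\tau^q)$ conclusion of the Taylor-expansion argument.
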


\begin{proof}
Since we have
\begin{equation*}
	\dx^n = \frac{1}{\tau} \sum_{j=0}^q \delta_j \xs(t_{n-j}) - \dotxs(t_n) ,
\end{equation*}
the bound $\|\dx^n\|_{\bfK(\xls^n)} \leq c \tau^p $ just follows by Taylor expansion.

Comparing the defects $\dv(t_n)$ of the semi-discretization, see \eqref{dv},  and $\dv^n$ of the full discretization, see \eqref{BDF-defects}, we find that they are related by
$$
	\redon \bfM(\wtxls^n) \redoff \dv^n = \bfM(\xls^n)\dv(t_n) + \bigl( \bfK(\widetilde\bfx_*^n) - \bfK(\xls^n) \bigr)\bfv_*^n
 - \bigl(\bfg(\widetilde\bfx_*^n,\widetilde\bfu_*^n) - \bfg(\xls^n,\uls^n) \bigr).
$$
Using Lemma~\ref{lemma: semidiscrete residual} to bound $\dv(t_n)$ and the same arguments as in the proof of \bbk Lemma~6.2 \ebk in \cite{KL2018} to bound the remaining terms, and $\|\widetilde \bfx_*^n - \bfx_*^n\|_{\bfK(\bfx_*^n)} \leq c \tau^p$ (see~\cite[Lemma~6.2]{KL2018}), we obtain the stated bound for $\dv^n$.

Similarly, the defects $\du(t_n)$ of the semi-discretization, see \eqref{du},  and $\du^n$ of the full discretization, see \eqref{BDF-defects},  are related by
\begin{align*}
	\redon \bfM(\wtxls^n) \redoff \du^n = &\ \bfM(\bfx_*^n)\du(t_n) \\
	&\ \blueon + \bfM(\widetilde \bfx_*^n) \bigl(\dot \bfu_*^{n} -\dot \bfu_*(t_{n}) \bigr) 
	+ \bigl(\bfM(\widetilde \bfx_*^n)  - \bfM(\bfx_*^n)\bigr)\dot \bfu_*(t_{n}) 
	\blueoff \\
	&\ + \bigl(\bfA(\widetilde \bfx_*^n) - \bfA(\bfx_*^n)\bigr) \bfu_*^{n}
	 - \bigl(\bff(\widetilde\bfx_*^n,\widetilde\bfu_*^n) - \bff(\xls^n,\uls^n) \bigr).
\end{align*}
The term $\du(t_n)$ is estimated in Lemma~\ref{lemma: semidiscrete residual}, \blueon while the difference of the discrete and continuous time derivatives is estimated via a Peano kernel representation, see \cite[Section~3.2.6]{Gautschi}, and $\|\widetilde \bfx_*^n - \bfx_*^n\|_{\bfK(\bfx_*^n)} \leq c \tau^p$, from~\cite[Lemma~6.2]{KL2018}. \blueoff The further terms are estimated like the analogous terms in the equation for $\dv^n$ above. This yields the stated bounds.
\qed
\end{proof}

\section{Proof of Theorem~\ref{MainTHM-full}}
\label{section: proof completed - full}
With the stability estimate of Proposition~\ref{proposition:stability - coupled problem - full} and the defect bounds of Lemma \ref{lemma: semidiscrete residual - full} at hand, the proof is now essentially the same as in Section~\ref{section: proof completed}.
The errors are decomposed using finite element interpolations of $X$ and $v$ and the Ritz map \eqref{uhs-ritz} for $u$ and using the  composed lift $L$ from Section~\ref{subsec:lifts}: 
\begin{align*}
(X_h^n)^L  - X(\cdot,t_n)  =&\ \big( \widehat X_h^n  - X_h^*(\cdot,t_n)  \big)^{l} +  \big((X_h^*(\cdot,t_n))^l  - X(\cdot,t_n) \big), \\
(v_h^n)^L  - v(\cdot,t_n)  =&\ \big( \widehat v_h^n  - v_h^*(\cdot,t_n)  \big)^{l} +  \big((v_h^*(\cdot,t_n))^l  - v(\cdot,t_n) \big), \\
(u_h^n)^L  - u(\cdot,t_n)  =&\ \big( \widehat u_h^n  - u_h^*(\cdot,t_n)  \big)^{l} +  \big((u_h^*(\cdot,t_n))^l  - u(\cdot,t_n) \big).
\end{align*}
The last terms  in these formulas can be bounded in the $H^1(\Gamma)$ norm by $Ch^k$, using the interpolation and Ritz map error bounds of \cite{Kovacs2017}.

To bound the first terms on the right-hand sides, we use  the stability estimate of Proposition~\ref{proposition:stability - coupled problem - full} together with the defect bounds of Lemma~\ref{lemma: semidiscrete residual - full} to obtain, for $t_n=n\tau\le T$ and under the required step-size restriction,
$$
\| \ex^n \|_{\bfK(\xs(t_n))} + \| \ev^n \|_{\bfK(\xs(t_n))} + \| \eu^n \|_{\bfK(\xs(t_n))}
 \le C(h ^k + \tau^q).
$$
By the equivalence of norms shown in \cite[Lemma~3]{Dziuk88} and by \eqref{K-H1} we have 
\begin{align*}
   & \| \big(\widehat u_h^n - u_h^*(t_n)\big)^{l} \|_{H^1(\Ga[X(t_n)])} \le
     c \| \widehat u_h^n - u_h^*(t_n) \|_{H^1(\Ga[X_h^*(t_n)])} 
     \\
   & = c \| \widehat u_h^n - u_h^*(t_n) \|_{H^1(\Ga_h[\xs(t_n)])}
    = c \| \eu^n \|_{\bfK(\xs(t_n))},
\end{align*}
and similarly for $\widehat v_h^n - v_h^*(t_n)$ and $\widehat X_h^n - X_h^*(t_n)$. This proves the result.

\section{Numerical experiments}

We performed the following numerical experiments for mean curvature flow: 
\begin{itemize}
 \item
 for the sphere, where the exact evolution is known; see, e.g.~\cite{Huisken1984}; 
 \item 
 for a dumbbell-shaped surface, where the evolution is known to develop a pinch singularity in finite time; see, e.g.~\cite{Dziuk90}. 
Here we compare Dziuk's algorithm with the algorithm proposed in this paper and with a variant where the computed normal vector is rescaled to unit norm.
\end{itemize}
The initial meshes for both examples were generated using DistMesh \cite{distmesh} without taking advantage of the symmetry of the surface, as opposed to \cite[Figure~2]{Dziuk90} and \cite[Figure~14]{ElliottFritz_DT}.

\subsection{Mean curvature flow for the sphere}
The radius $R(t)$ of a two-dimensional sphere $\Ga\t$ evolving according to mean curvature flow satisfies the differential equation, cf.~\cite{Mantegazza}:
\begin{align*}
	R'\t = &\ - \frac{2}{R\t}, \qquad \textnormal{for} \quad t \in [0,T_{\max}]\\
	R(0) =&\ R_0,
\end{align*}
with solution $R\t = \sqrt{R_0^2-4t}$, which is zero at time $T_{\max} = R_0^2/4$. The mean curvature of $\Ga\t$ is $H(x,t) = \bbk 2 / R\t \ebk$, while the normal vector at $x \in \Ga\t$ is $\nu(x) = x/|x|$. The surface shrinks to a point at time $T_{\max}$.

\begin{figure}[t]
	\centering
	\includegraphics[width=\textwidth]{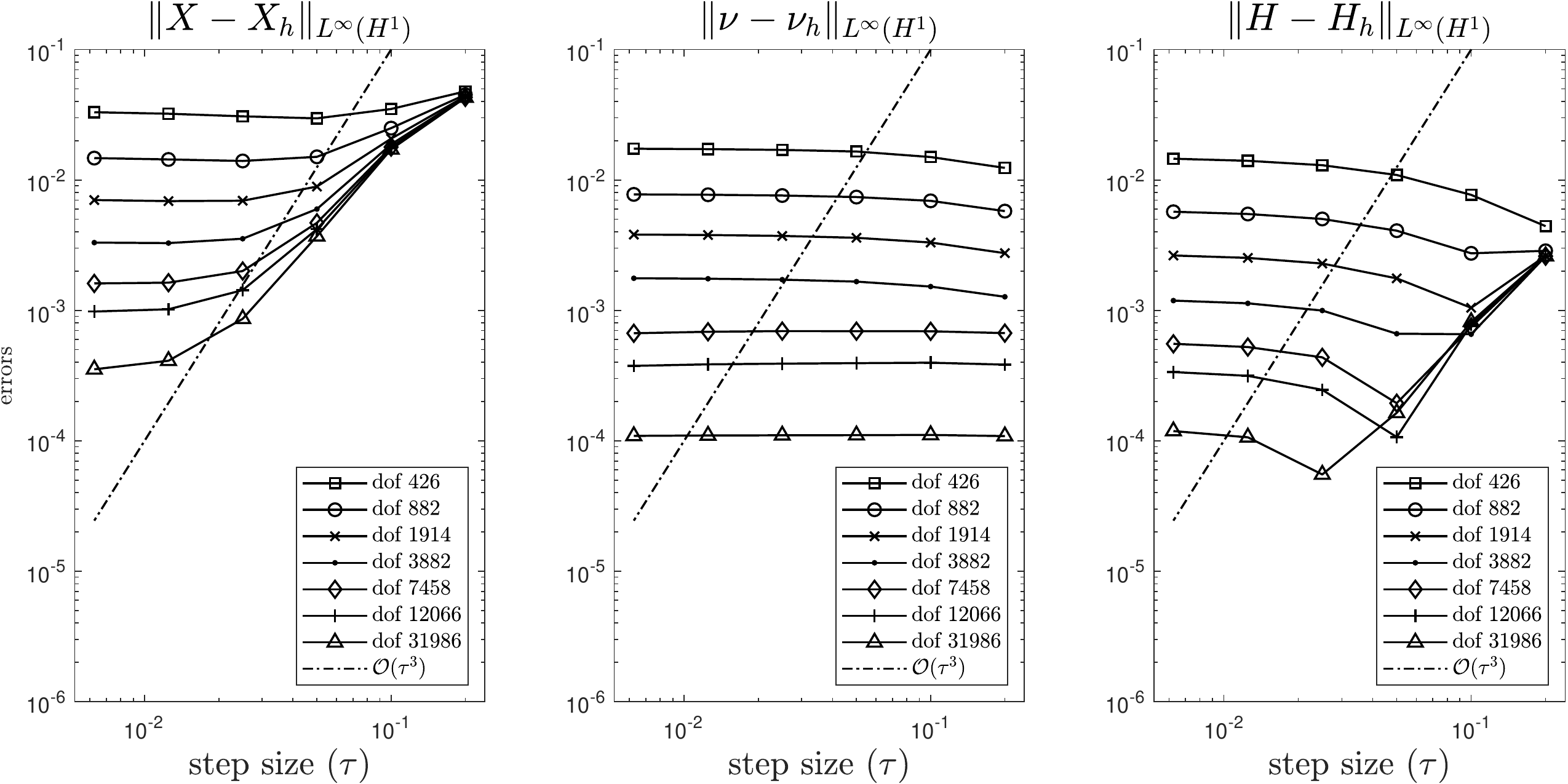}
	\caption{Temporal convergence of the BDF3 / quadratic ESFEM discretisation for MCF for a sphere}
	\label{fig:conv_time}
\end{figure}
\begin{figure}[t]
	\centering
	\includegraphics[width=\textwidth]{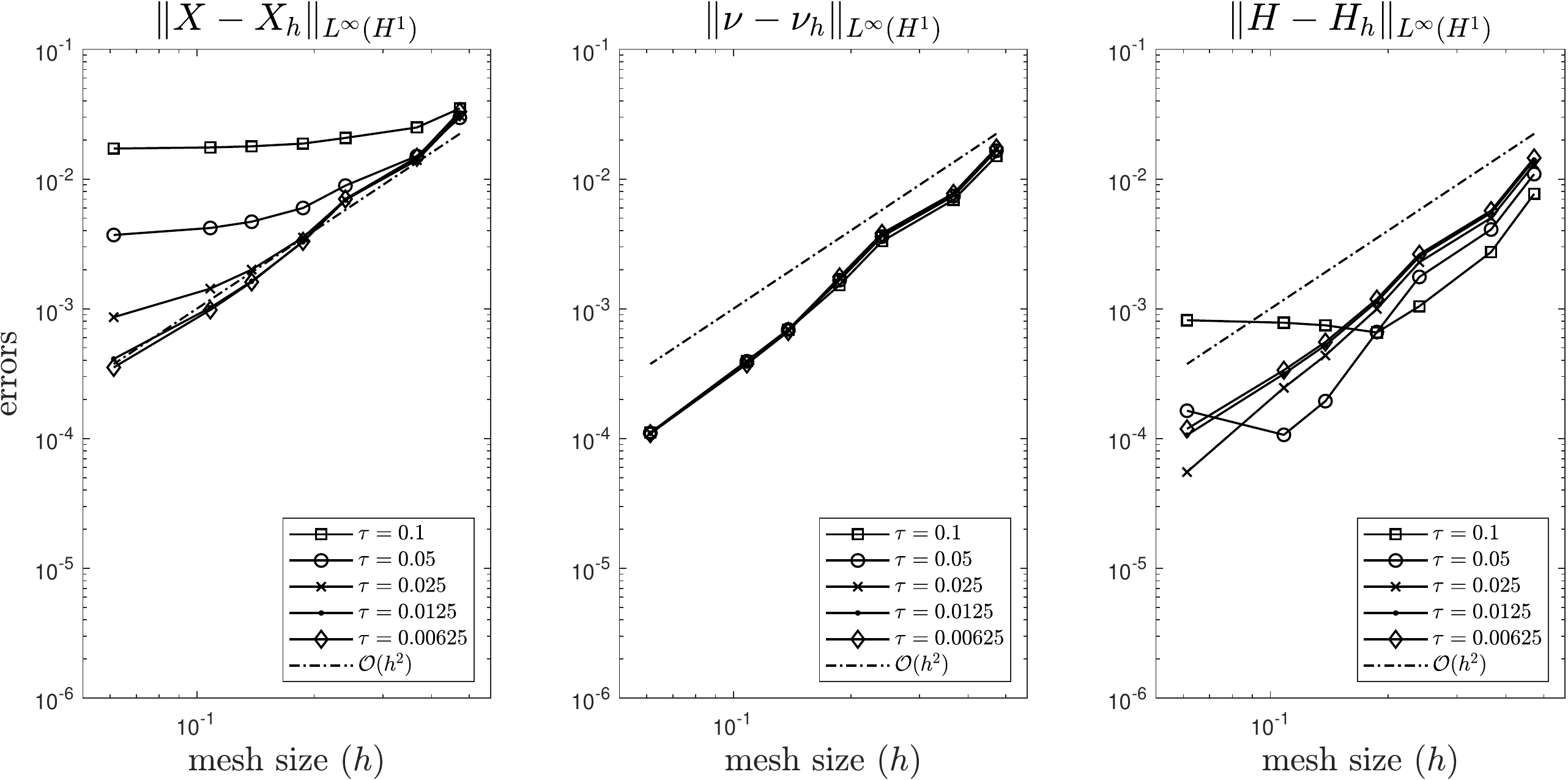}
	\caption{Spatial convergence of the BDF3 / quadratic ESFEM discretisation for MCF for a sphere}
	\label{fig:conv_space}
\end{figure}

Using the algorithm in \eqref{BDF} (combining the $3$-step BDF method and quadratic evolving surface finite elements) we computed approximations to mean curvature flow for the sphere of radius $R_0=2$, over the time interval $[0,T_{\max}]=[0,0.6]$. For our computations we used a sequence of time step sizes $\tau_k=\tau_{k-1}/2$ with $\tau_0 = 0.2$, and a sequence of initial meshes of mesh widths $h_k \approx 2^{-1/2} h_{k-1}$ with $h_0 \approx 0.5$.

In Figure~\ref{fig:conv_time} and \ref{fig:conv_space} we report the errors between the numerical and exact solution (both for the surface error and the error of the dynamic variables $\nu$ and $H$) until time $T=0.6$. 
The logarithmic plots show the \bbk $L^\infty(H^1)$ norm \ebk errors against the time step size $\tau$ in Figure~\ref{fig:conv_time}, and against the mesh width $h$ in Figure~\ref{fig:conv_space}.
The lines marked with different symbols correspond to different mesh refinements and to different time step sizes in Figure~\ref{fig:conv_time} and \ref{fig:conv_space}, respectively.

In Figure~\ref{fig:conv_time} we can observe two regions: a region where the temporal discretisation error dominates, matching to the $O(\tau^3)$  order of convergence of our theoretical results, and a region, with small time step size, where the spatial discretization error dominates (the error curves flatten out). For Figure~\ref{fig:conv_space}, the same description applies, but in the presented cases only the first region can be observed.

The convergence in time and in space as shown by Figures~\ref{fig:conv_time} and \ref{fig:conv_space}, respectively, is in agreement with the theoretical convergence results (note the reference lines).

\subsection{Singular mean curvature flow}

We consider the mean curvature flow starting from a dumbbell shaped surface~$\Ga^0$, given by the distance function
\begin{equation*}
	d(x) = x_1^2 + x_2^2 + G(x_3^2) - 0.04,
\end{equation*}
with $G(s)=2s(s-199/200)$. The evolution of such a surface is known to be singular, as it has often been reported in the literature before; see \cite[Figure~3]{White2002}, \cite[Figure~3.2]{Ecker2012}, or \cite[Figure~2]{Dziuk90}, \cite[Section~7]{ElliottFritz_DT} for numerical experiments.

In Figure~\ref{fig:singularMCF} we compare the algorithm of Dziuk \cite{Dziuk90} (see \eqref{eq:matrix--vector form Dziuk}) and Algorithm \eqref{BDF} (with the two-step BDF method; starting values at $t_1$ are obtained by a linearly implicit backward Euler step) on a mesh with 
$10522$ nodes and time step size $\tau \approx 3 \cdot 10^{-3}$. The figures show snapshots of the surface evolution at different times (from top to bottom). The left column shows plots from Dziuk's algorithm, the middle column from \eqref{BDF}, while on the right we present the results of a normalized version of  \eqref{BDF}, where the normal vector $\n_h$ computed from the discretized evolution equation is rescaled to unit norm at each node. Figure~\ref{fig:curvature} shows the evolution and blow-up of the computed mean curvature. 

\begin{figure}[htbp]
	\centering%
	\includegraphics[width=\textwidth,height=0.24\textheight]{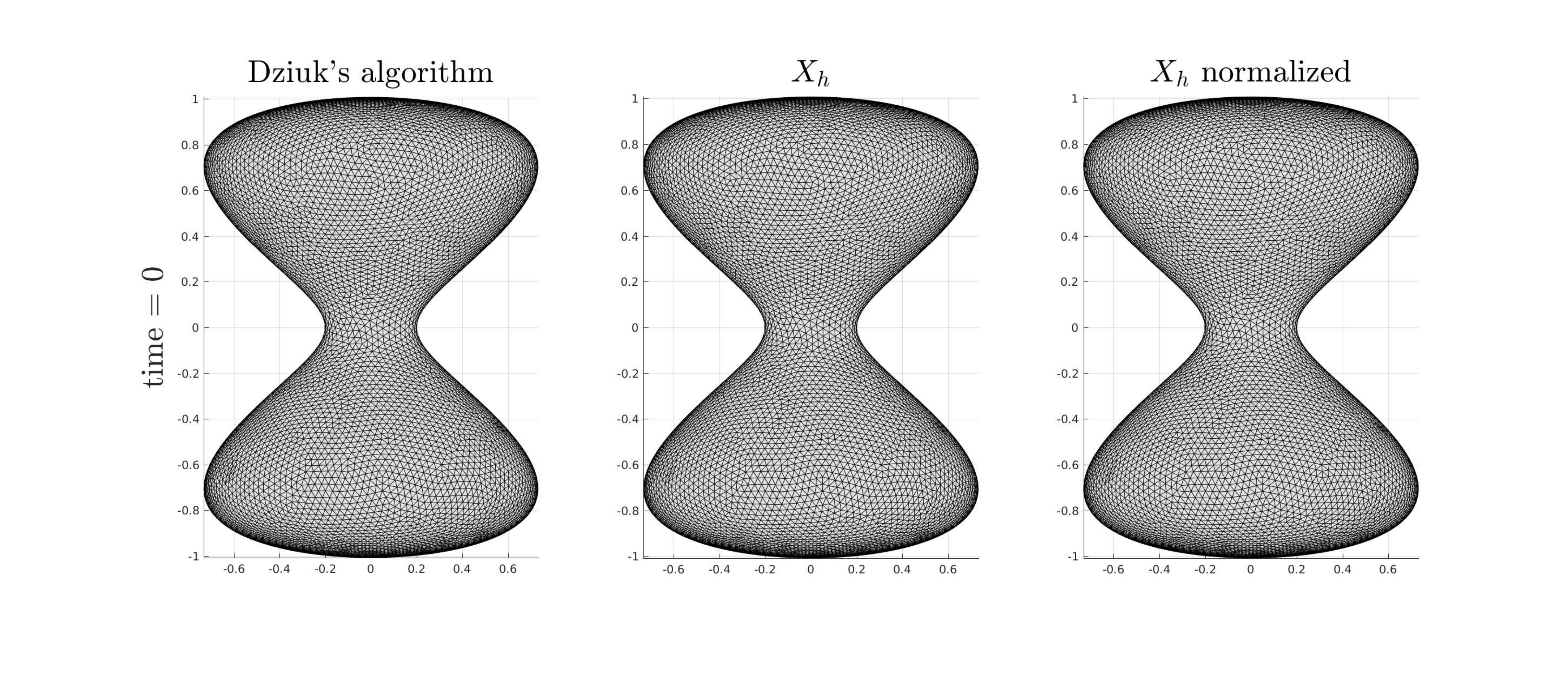}
	\includegraphics[width=\textwidth,height=0.24\textheight]{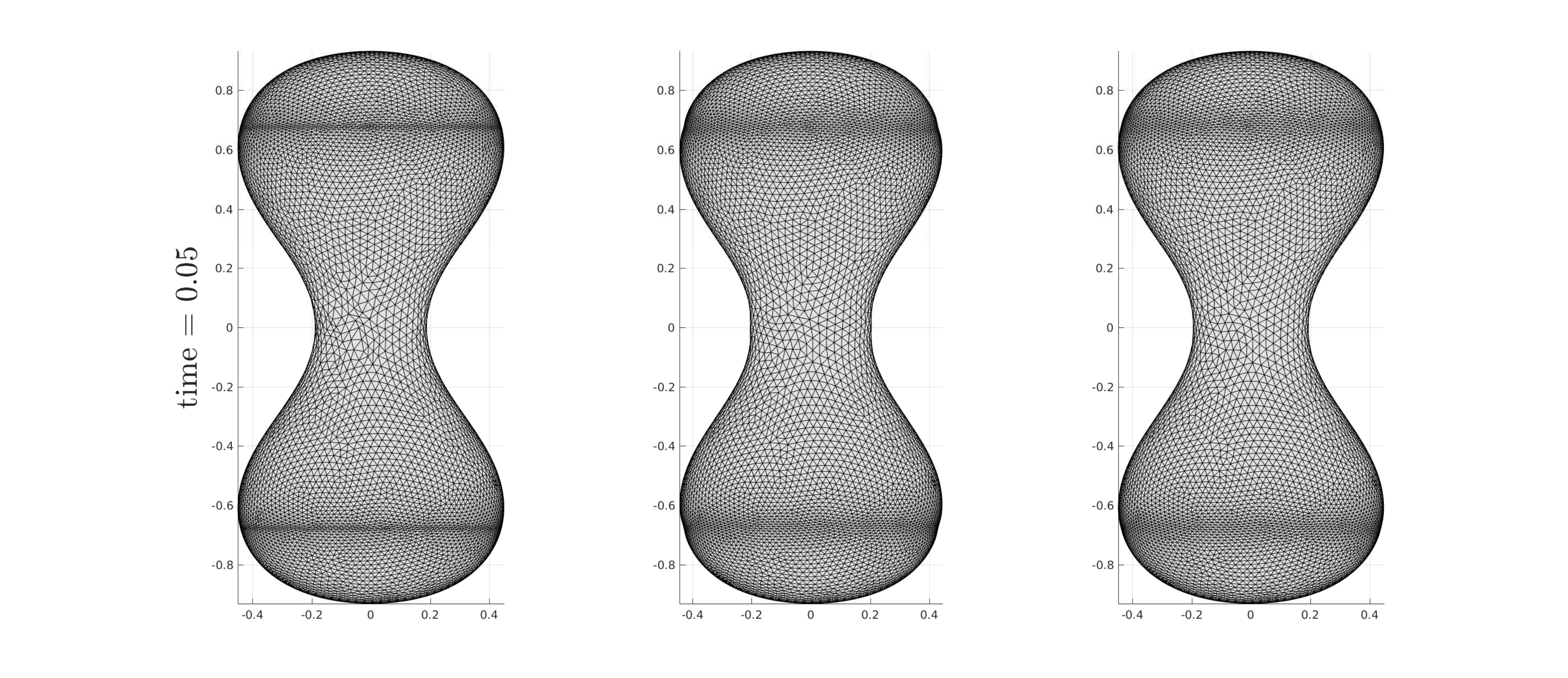}
	\includegraphics[width=\textwidth,height=0.24\textheight]{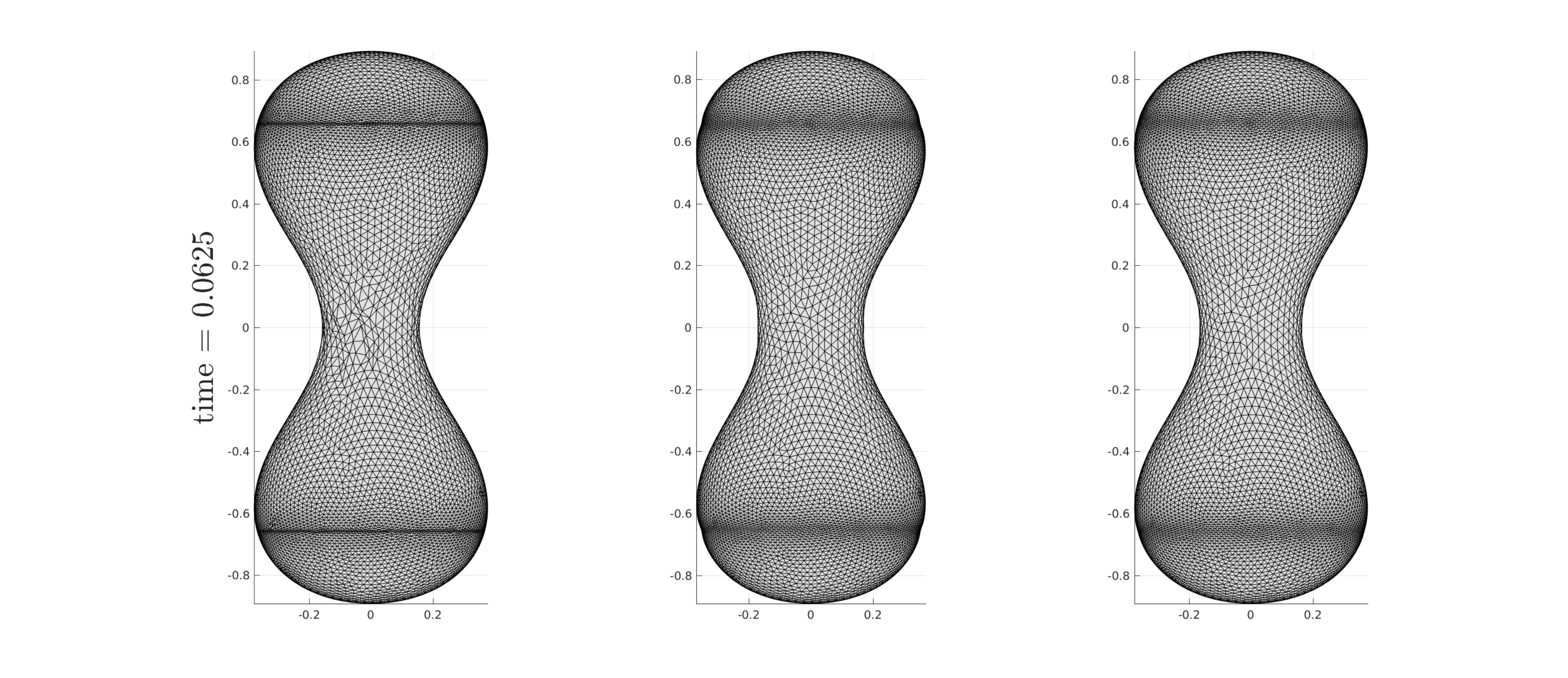}
	\includegraphics[width=\textwidth,height=0.24\textheight]{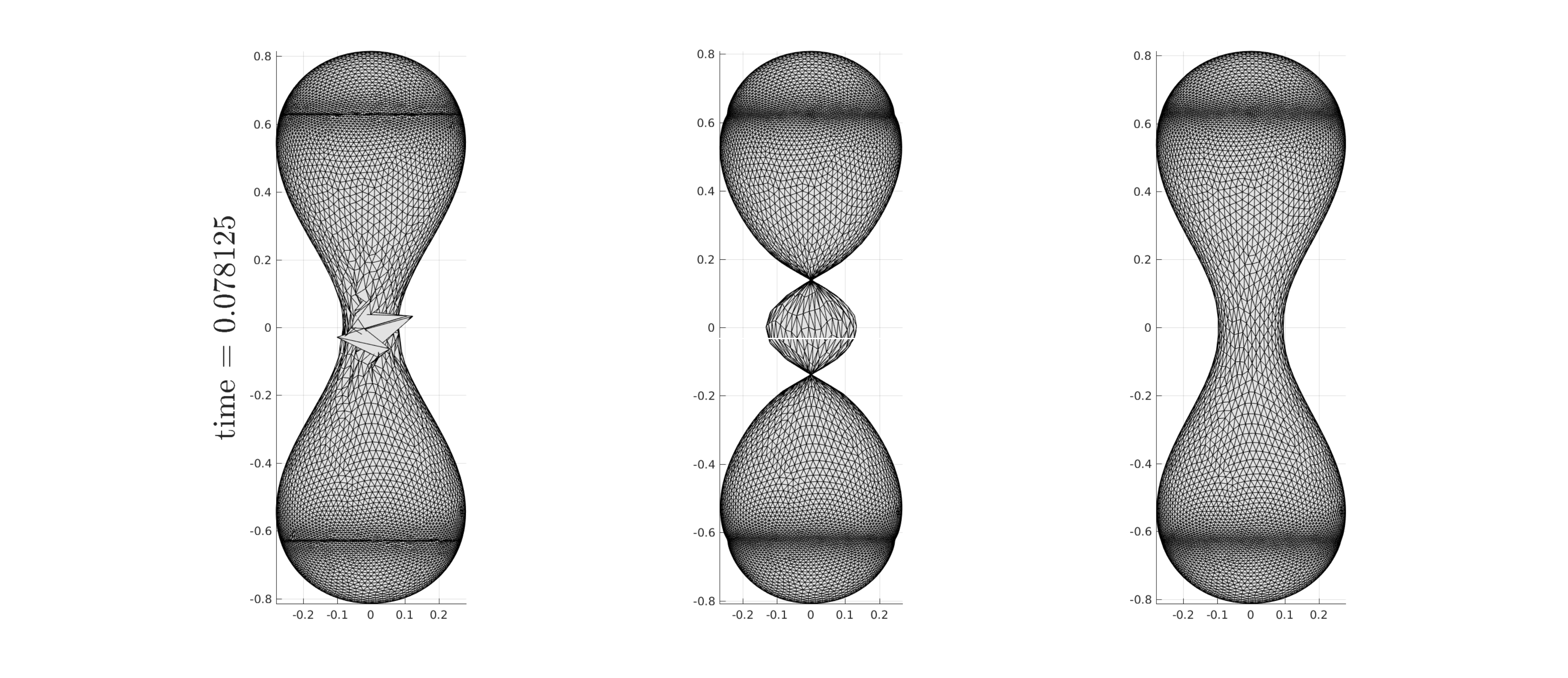}
	\caption{Comparison of Dziuk's algorithm, of Algorithm \eqref{BDF},  and of its version with normalized approximate normal vector $\nu_h$ on a  flow developing a pinch singularity.}
	\label{fig:singularMCF}
\end{figure}
\begin{figure}[htbp]
	\centering%
	\includegraphics[width=\textwidth,height=0.24\textheight]{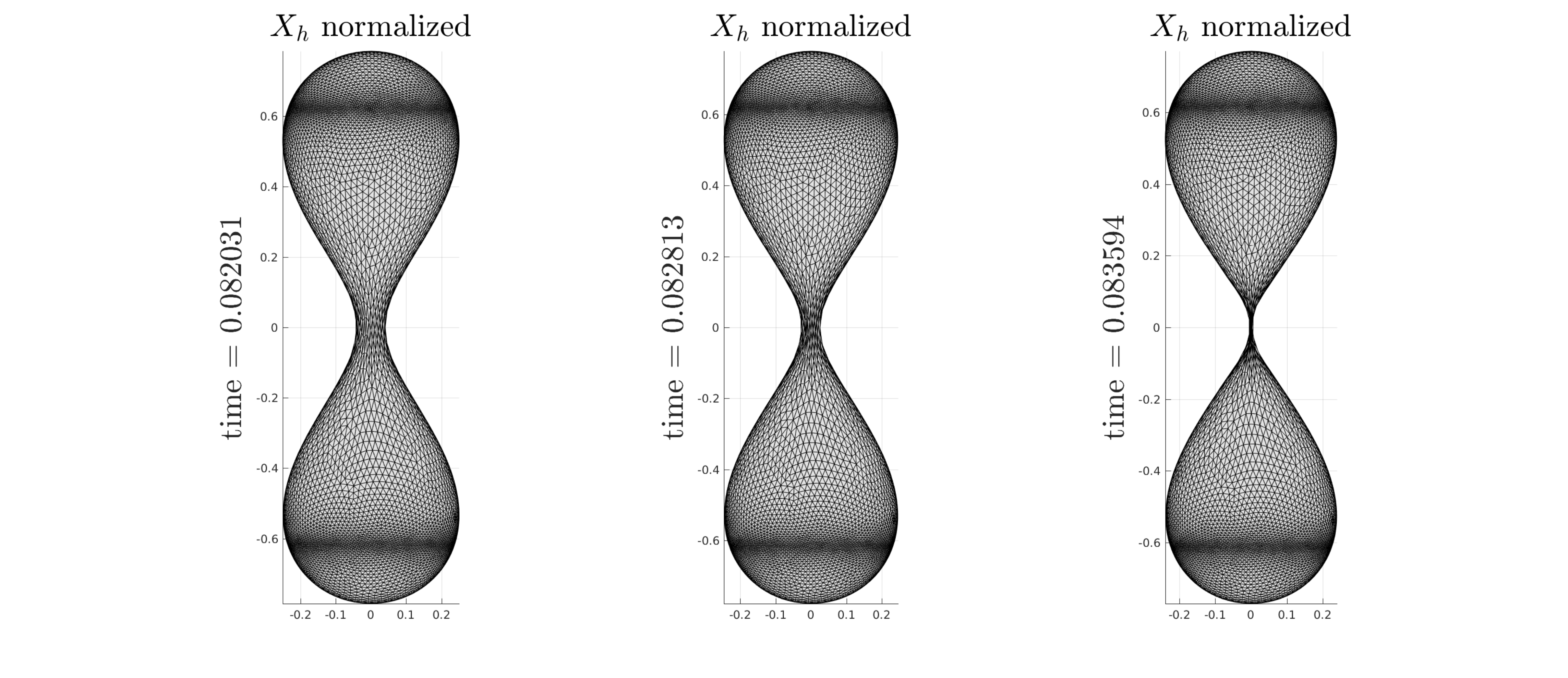}
	\caption{The version of \eqref{BDF} with normalized $\nu_h$ close to the pinch singularity.}
	\label{fig:MCF_normalised}
\end{figure}
\begin{figure}[htbp]
	\centering
	\includegraphics[scale=0.6]{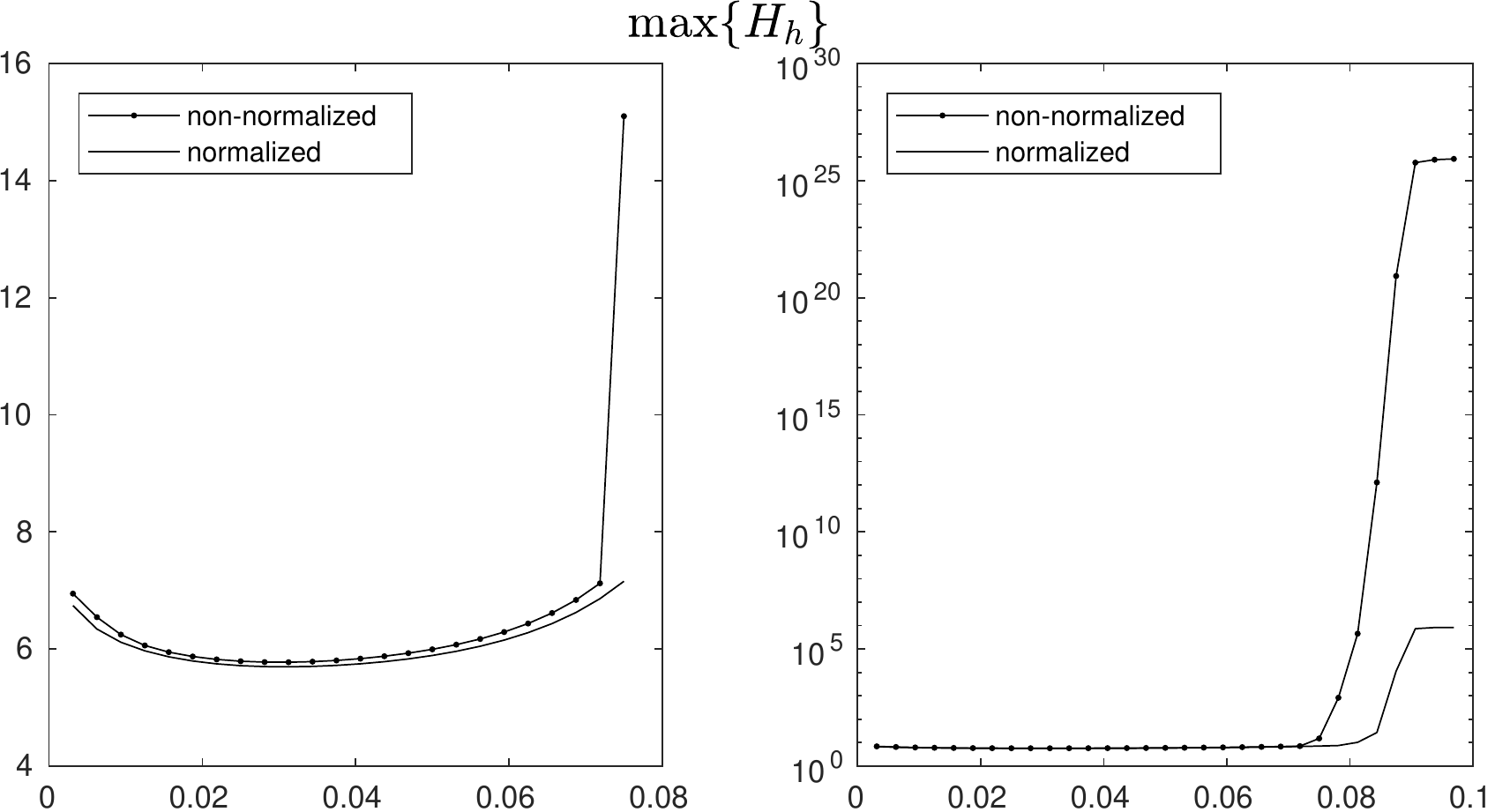}
	\caption{Maximum of mean curvature and blow-up plotted against time.}
	\label{fig:curvature}
\end{figure}

\bbk
\begin{figure}[htbp]
	\centering
	\includegraphics[scale=0.6]{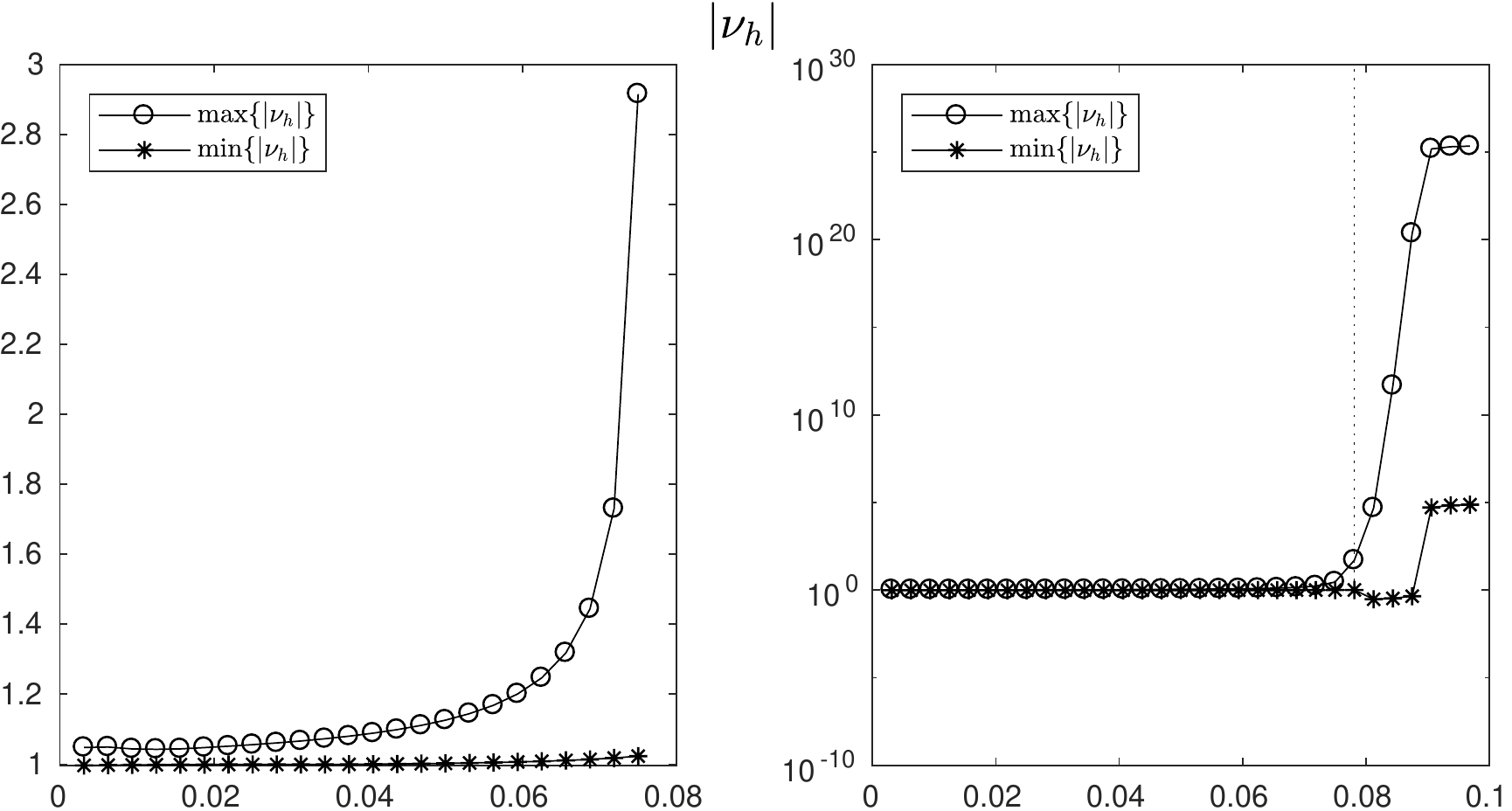}
	\caption{Maximum and minimum length of normal vector and blow-up plotted against time.}
	\label{fig:normal}
\end{figure}
\ebk

\bcl 
We find that all three methods work reasonably well as long as the mean curvature flow stays away from the singularity, say for times $t\le 0.07$ with the singularity near $0.08$.  With all three methods, however, the distribution of the moving nodes on the surface appears most dense near the two widest horizontal circles of the dumbbell, which is not a desired behaviour. The methods behave differently shortly before the singularity (last row of pictures in Figure~\ref{fig:singularMCF}): Dziuk's method develops an instability, the method \eqref{BDF} remains stable but is past the pinch singularity too early, whereas the method \eqref{BDF} with normalized computed normal vectors behaves very well also close to the pinch singularity; see Figure~\ref{fig:MCF_normalised}. In the unnormalized method, the computed normal vector grows fast in norm shortly before the singularity, \ecl \bbk see Figure~\ref{fig:normal}, \ebk \bcl which speeds up the evolution according to $v=-H\nu$. This effect is delayed when a finer time and space grid are used. 
\ecl


\section{Conclusion and further comments}
In this paper we have given the first proof of convergence of an evolving finite element method for mean curvature flow of a closed surface. Under sufficient regularity assumptions on the evolving surface we have proved optimal-order convergence. Unlike {Dziuk's} method, the numerical method proposed here relies on the discretization of evolution equations for geometric quantities, which are then used in the velocity law. 

In the convergence analysis we have clearly separated the issues of consistency and stability. The geometry enters only in the consistency error bounds, but not in the stability analysis. The latter is done in the matrix--vector formulation of the method based on the technique of energy estimates --- here by testing the error equation with the (discretized) time derivative of the error vector --- and on results from \cite{KLLP2017} that relate the mass and stiffness matrices for different finite element surfaces. Together with inverse estimates, this allows us to control the $W^{1,\infty}$ norm of the discrete surface, which is essential in proving stability estimates that, together with consistency estimates, yield convergence. \bcl However, our stability analysis does not work for Dziuk's method, as is explained in Remark~\ref{rem:Dziuk}.\ecl


The numerical method studied here is based on the discretization of the evolution equations for the normal vector and the mean curvature, for which we then obtain optimal-order error bounds in the $H^1$ norm. Instead, one could also consider the evolution equations for other geometric quantities, such as the second fundamental form. When this two-form is expressed in local coordinates on each element, then a discontinuous Galerkin discretization appears as the method of choice to account for different local coordinates on different finite elements. 

We expect that the discretization of evolution equations of geometric quantities will turn out useful also for the numerical treatment of other geometric evolution equations. For example, the extension of our numerical approach and its analysis appear fairly direct \bcl for mean curvature flow coupled to diffusion on the surface and also \ecl for inverse mean curvature flow, for which the corresponding evolution equations are given in \cite{HuiskenPolden}. 

On the practical side, given that we now have a convergent algorithm that provides optimal-order approximations also to the normal vector and to the mean curvature, is this the method that we would recommend for practical computations? Not quite. There are at least two aspects that need to be addressed to increase the robustness of the method beyond very fine meshes, in particular in near-singular situations:
\begin{itemize}
\item Mesh quality: Like Dziuk's method, also the method presented here often does not yield a satisfactory distribution of the nodes on the surface. It is to be expected that an arbitrary Lagrange -- Euler (ALE) approach combined with the procedure of this paper can substantially mitigate this problem. 
In the literature this problem has been addressed, for example, by Elliott and Fritz \cite{ElliottFritz_DT} using the DeTurck trick, and by Barrett, Garcke and N\"urnberg in a series of papers \cite{BGN2007,BGN2008,BGN2008Willmore,BGN2011,BGN2012}.
\item The normal vector $\n_h$ obtained from the discretized evolution equation is not the same as the normal vector $\n_{\Gamma_h[\bfx]}$ of the discrete surface. This poses no problem as long as they are close to each other (which asymptotically for sufficiently fine meshes they are by our convergence result), but leads to artifacts once they differ substantially. The situation can be improved by adding a stabilizing term in the discretization that works against a drift of the two normal vectors: on the right-hand side of \eqref{discrete nu} one can add a stabilizing term $-\alpha \int_{\Gamma_h[\bfx]} (\n_h -\n_{\Gamma_h[\bfx]}) \cdot \phin_h$ with a parameter $\alpha>0$. 
\\
Moreover, the normal vector computed from the discretized evolution equation is not of unit norm. While this appears easy to remedy in practice by rescaling to unit norm at every node (as is done in Section 13),  the effect of such a pointwise rescaling is not easily understood theoretically. This problem is familiar also in numerical methods for harmonic map heat flow and equations for micromagnetism; see, e.g. \cite{Cimrak07,Prohl01}.
\end{itemize}

We presented the numerical method for two-dimensional closed surfaces, but both the formulation and the convergence analysis of the numerical method can be extended to closed smooth hypersurfaces of arbitrary dimension. This is straightforward for three-dimensional hypersurfaces in $\R^4$, where the proof extends verbatim. \bcl The only change is in the use of the inverse inequality for finite element functions between the $W^{1,\infty}$ and $H^1$ norms, where a  factor $h^{-3/2}$ instead of $h^{-1}$ appears. This extra factor $h^{-1/2}$, however, does not affect the course of the proof. In higher dimensions, the error bounds for interpolation need to be replaced by analogous error bounds for quasi-interpolation, and the inverse estimates depend on the dimension, yielding a factor $h^{-d/2}$ in dimension $d$.  As a consequence, the minimal polynomial degree $k$ required for our stability and convergence analysis becomes $k\ge \lfloor d/2\rfloor+1$ for a $d$-dimensional hypersurface in $\R^{d+1}$, where $\lfloor d/2\rfloor$ is the largest integer smaller than $d/2$. Under the stepsize restriction $\tau\le C_0 h$, we then also need the temporal order $q\ge \lfloor d/2\rfloor+1$.\ecl


\section*{Acknowledgement}
We thank Frank Loose and Gerhard Wanner for helpful comments, and we also thank J\"org Nick for helpful discussions.

We thank two anonymous referees for their constructive comments on a previous version.

The work  was partially supported by a grant from the Germany/Hong Kong Joint Research Scheme sponsored by the Research Grants Council of Hong Kong and the German Academic Exchange Service (G-PolyU502/16). The work of Bal\'azs Kov\'acs and Christian Lubich is supported by Deutsche Forschungsgemeinschaft, SFB 1173.


\end{document}